\documentclass[10pt]{amsart}
\usepackage{stmaryrd}
\usepackage{amssymb,amsmath,amscd,amsfonts,amsthm,dsfont,graphicx}
\usepackage[numbers]{natbib}
\usepackage{xspace}

% VERSION FEBRUARY 2011

\pdfoutput=1

\theoremstyle{plain}
\newtheorem{lemma}{Lemma}

\newtheorem{theorem}[lemma]{Theorem}

\theoremstyle{definition}

\newtheorem{remark}{Remark}

\newcommand{\pPoisson}{{$p$-Poisson}\xspace}

\newcommand{\pLaplacian}{{$p$-Laplacian}\xspace}

\newcommand{\qsine}{{$q$-sine}\xspace}
\newcommand{\qopt}{{q_{\text{opt}}}}

\newcommand{\be}{\begin{equation}}
\newcommand{\ee}{\end{equation}}
\newcommand{\N}{\mathbb{N}}

\newcommand{\R}{\mathbb{R}}

\newcommand{\Dt}{{\Delta t}}

\newcommand{\ud}{\,\mathrm{d}}

\newcommand{\lb}{\llbracket}
\newcommand{\rb}{\rrbracket}

\newcommand{\per}{\mathrm{per}}

\newcommand{\figref}[1]{{figure~\ref{#1}}}
\newcommand{\tabref}[1]{{table~\ref{#1}}}
\newcommand{\Figref}[1]{{Figure~\ref{#1}}}

\title[]{Approximation properties of the $q$-sine bases}
\author[]{Lyonell Boulton$^1$ and Gabriel Lord$^2$}
\address{Department of Mathematics and Maxwell Institute for Mathematical 
Sciences \linebreak Heriot-Watt University, Edinburgh
EH14 4AS, United Kingdom}
\email{$^1$L.Boulton@hw.ac.uk and $^2$G.J.Lord@hw.ac.uk}
\date{February 2011}

\begin{document}
\begin{abstract}
  For $q\geq \frac{12}{11}$ the eigenfunctions of the non-linear
  eigenvalue problem associated to the one-dimensional $q$-Laplacian
  are known to form a Riesz basis of $L^2(0,1)$.  We examine
  in this paper the approximation properties of this family of
  functions and its dual, and establish a non-orthogonal spectral
  method for the $p$-Poisson boundary value problem and its
  corresponding parabolic time evolution initial value problem with
stochastic forcing. The
  principal objective of our analysis is the determination of optimal
  values of $q$ for which the best approximation is achieved for a given $p$
  problem.
\end{abstract}
\maketitle

\tableofcontents
\pagebreak

\section{Introduction}
The $p$-\emph{Laplace operator} or $p$-\emph{Laplacian}, a
generalization of the ordinary Laplace operator, arises naturally in
applications from physics and engineering including: slow-fast
diffusion related to particles \cite{MR1419017}, superconductivity
\cite{Barrett2000977}, wavelet inpainting \cite{ZHANG2007546},
image processing \cite{Kuijper1} and game theory \cite{Rossi_TOW}. 
A typical application in the large
$p$ limit is a model for slow-fast diffusion for sandpiles
\cite{MR1451539}.  
Recently 
there has been a significant amount of research activity encompassing
methods of approximation for solution of non-linear partial
differential equations involving this operator
\cite{Barrett2000977,MR1276708,MR1192966}.  
The aim of the present paper is to further contribute to this activity by considering the particular case of the one-dimensional {$p$-Laplacian} and examine in detail the
approximation properties of a generalized spectral method described as follows.

Let $p>1$.  For $z\in \R$ let $\lb z\rb^{p-1}=z|z|^{p-2}$.  By
extension from the linear case corresponding to $p=2$, we define the
one-dimensional $p$-Laplacian to be the differential operator $
\Delta_p u=(\lb u' \rb^{p-1})' $. Here $u:[0,1]\longrightarrow \R$ is
such that $\lb u' \rb^{p-1}\in H^1(0,1)$.  The corresponding
$p$-Poisson boundary value problem is given by
\begin{equation} \label{ppoisson}
\begin{aligned}
&\Delta_p u(x)=g(x) & 0\leq x \leq 1 \\
&u(0)=u(1)=0
\end{aligned}
\end{equation}
where $g\in L^2(0,1)$. We also consider the related evolution equation 
\begin{equation} \label{evo_equ}
\begin{aligned}
&\frac{\partial u(x,t)}{\partial t}=[\Delta_p u(x,t)-g(x)]+ \nu \frac{\partial W(x,t)}{\partial t} \\
&u(x,0)=0 & 0\leq x \leq 1\\  &u(0,t)=u(1,t)=0 & t>0 \\
\end{aligned}
\end{equation} 
that includes a stochastic forcing term where the noise intensity
$\nu\geq 0$ and $W$ is a space-time Wiener process. Here  
\begin{equation}   \label{wiener}
  W(x,t) = \sum_{n \in \mathbb{N}} \beta_n(t) \psi_n(x)
\end{equation}
where $\beta_n$ are independent scalar Brownian motions, $\psi_n$ are
the eigenfunctions of the covariance operator $Q$ and $\alpha_n$ the
corresponding eigenvalues, see \cite{DaPZ,PrvtRcknr,Liu}. 
For the case of space-time white noise we
have $Q=I$. In the case of a deterministic system when $\nu=0$,
\eqref{ppoisson} is the steady state solution of \eqref{evo_equ}.

Let $q>1$. The so called $q$-\emph{sine functions} are defined as the
eigenfunctions of the $q$-Laplacian eigenvalue equation
\cite{MR680591,MR1469702,MR753635}:
\begin{equation}
\begin{aligned}
\label{sinep} 
-&\Delta_q u(x) =(q-1) \lambda \lb u(x)\rb ^{q-1} & 0\leq x\leq 1\\
&u(0)=u(1)=0.
\end{aligned}
\end{equation}
This family of functions and the corresponding problem \eqref{sinep}
was studied over 30~years ago by Elbert \cite{MR680591} and later by
\^Otani \cite{MR753635}, Bennewitz and Sait\={o} \cite{bensai} in connexion with the computation of optimal
constants in Sobolev-type inequalities. They generalize in a natural
fashion the 2-sine basis (corresponding to the linear case), and they
have very similar periodicity and interlacing structures.

In \cite{MR2240660} analogues of the classical completeness and
expansion theorems for the $q$-sine functions were established for
$q\geq 12/11$. Specifically it was shown that they form a Riesz basis
of $L^2(0,1)$. This leads to the following question: what are the
approximation properties of this basis, as well as its dual basis, and
how they relate to the approximation properties of the standard 2-sine
basis?

Below we address this question by examining approximation of the
solutions of \eqref{ppoisson} and \eqref{evo_equ} via projection
methods with a $q$-sine and a dual $q$-sine basis, regarding
$q>1$ and $p>1$ as free parameters.  A main focus of attention is
the determination of optimal values of $q$ for which the highest order
of convergence is achieved in a given $p$-problem. We 
demonstrate that standard properties of the $2$-sine basis applied to the
$p=2$ problem (such as super-polynomial convergence when $g(x)$ is
smooth) are lost, when a $q$-sine basis for $q\not=2$ is considered for
a $p\not=2$ problem.  As it turns out, the property of being a basis
for the $q$-sine functions
conceals a remarkably rich structure which is far from
evident given the apparent simplicity of problem \eqref{sinep}.

Background material on the $p$-sine basis and its dual is considered
in \S\ref{2}. There we examine a matrix representation of the Schauder
transform introduced in \cite{MR2240660}. This will be crucial in our
subsequent analysis as it gives rise to a stable procedure for
constructing numerically both bases.

In \S\ref{approx_est} we find estimates for the approximation of
square integrable functions in terms of their regularity. The dual
$q$-sine basis turns out to have very similar approximation properties
as the $q=2$ basis (lemma~\ref{dual}).  On the other hand, however, it
is fairly simple to construct smooth functions such that their
$q$-sine Fourier coefficients do not decay faster than a power $-5/2$
for $q>2$ (lemma~\ref{basis}).  The latter is in stark contrast with
the most elementary results in the numerical approximation of
solutions of differential equations by orthogonal spectral methods.

Section \ref{3} is devoted to the $p$-Poisson boundary value problem. In
theorem~\ref{stability} we find explicit uniform bounds on the
distance between any two solutions of \eqref{ppoisson}, given the
distance between the corresponding right hand sides.  We then examine
in detail the numerical computation of solutions of \eqref{ppoisson}
for source terms that are subject to various different regularity
constraints. As it turns out, the estimates established 
in Theorem~\ref{stability}
appear to be sub-optimal. A more thorough investigation in this respect will be
reported elsewhere. See \cite{ref1_2} for related results in the
context of finite element approximation of the solutions of \eqref{ppoisson},
including the higher dimensional case.

In the final \S\ref{5} we study the numerical approximation of 
solutions to \eqref{evo_equ} both in the deterministic and
stochastic systems.  We describe our discretization strategy
and solve this problem for different values of $p$ and $\nu$.  Our
results provide evidence on the performance of the $q$-sine basis for
the solution of  \eqref{evo_equ}, by showing the dependence
on the parameter $q$ of numerically computed $L^2$ residuals.

\section{The $q$-sine basis and its dual}
\label{2}

The $q$-Laplacian eigenvalue problem \eqref{sinep}, although
non-linear, has a fairly simple 
structure. The eigenvalues are found to be $\lambda=(n\pi_q)^q$ where
$\pi_q=\frac{2\pi}{q\sin(\pi / q)}$. 
The first eigenfunction $f_1(x)$ associated to the first
eigenvalue $(\pi_q)^q$ is strictly increasing in $[0,1/2]$, decreasing
in $[1/2,1]$ and it is even with respect to $x=1/2$. It can be
extended to an odd function (with respect to $x=0$) in the interval
$[-1,1]$ and then to a $2$-periodic $C^1$ function of
$\mathbb{R}$. If $q>2$ then $f_1''(x)$ is singular at $x=1/2$.
The eigenfunctions $f_n(x)$ associated to the eigenvalues $(n\pi_q)^q$
satisfy $f_n(x)=f_1(nx)$ for all $n\geq 2$.

In \figref{fig:basisQ} we have plotted the first three eigenfunctions
(top) and their corresponding derivatives (bottom) for (a) $q=1.4$ and
(b) $q=10$. They typify the case $q<2$ (a) and $q>2$ (b), respectively.
For large $q$ the basis functions $f_n$ approach zig-zag functions,
which are the eigenfunctions of the $\infty$-Laplace eigenvalue
problem.

Below we always assume that the family $\{f_n\}_{n\in\N}$ is
normalized by the condition $f'_n(0)=n\pi_q$ and leave implicit the
dependence of $f_n$ on $q$. In the special case $q=2$ we write $e_n(x)=\sqrt{2}\sin
(n\pi x)$, so that $\{e_n\}_{n\in\N}$ is an orthonormal basis.

\begin{figure}[hth]
\begin{center}
  (a) \hspace{0.48\textwidth} (b) \\
  \includegraphics*[width=0.48\textwidth]{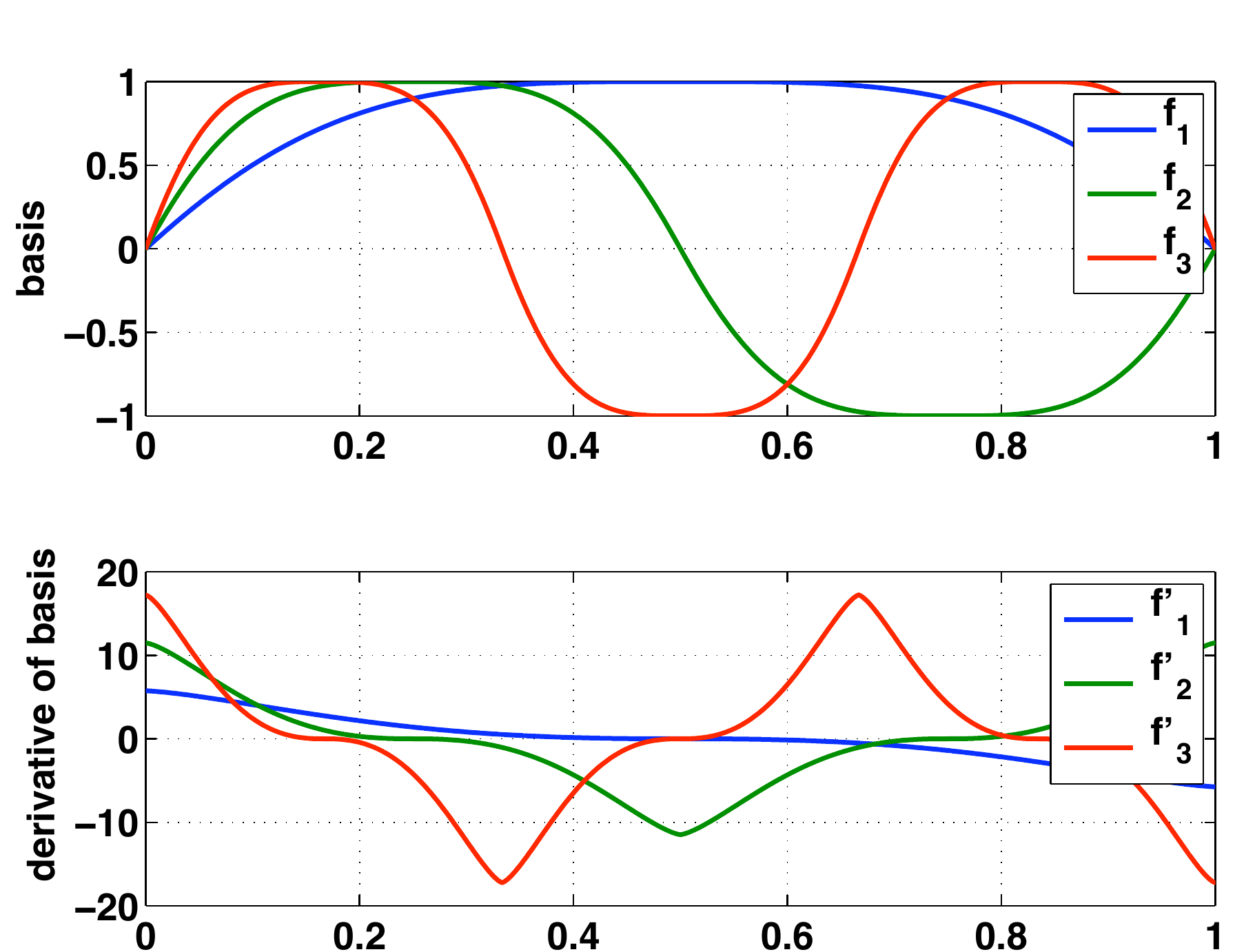}
  \includegraphics*[width=0.48\textwidth]{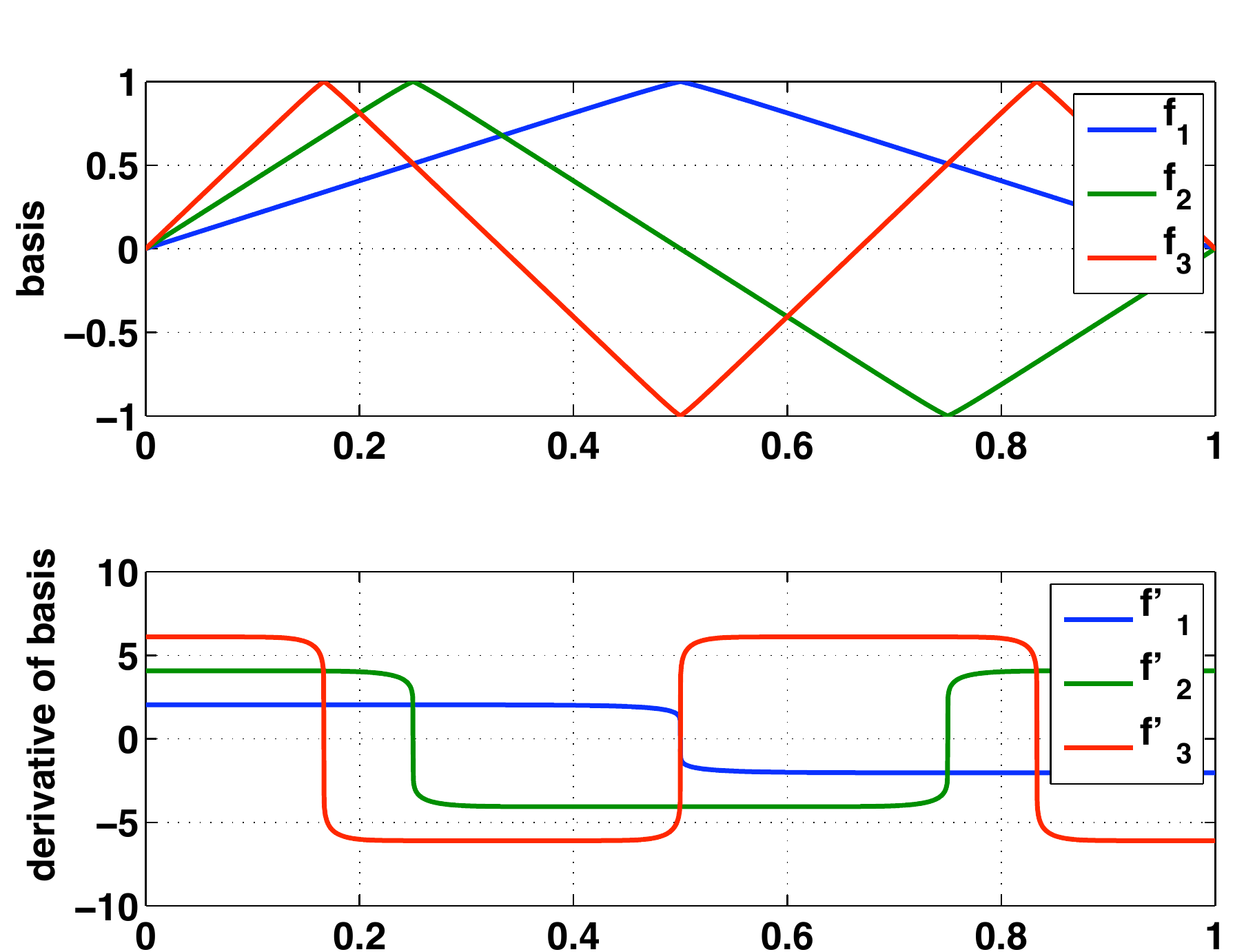}\\
  \caption{Approximation of the $q$-sine functions $f_j$ for $j=1,2,3$ (top)
    along with their derivatives (bottom) for (a) $q=1.4$ and (b) $q=10$.}
  \label{fig:basisQ}
\end{center}
\end{figure}

The Pythagorean identity generalizes to the $q$-sine functions 
\cite{MR680591} as
\begin{equation} \label{phyt}
   |f_1(x)|^q+\pi_q^{-q}|f_1'(x)|^q=1.
\end{equation}
Integrating this differential expression for small enough $x$ leads to the 
following explicit representation for the inverse function
\be
  f_1^{-1}(y) = \pi_q^{q}
  \int_0^y \frac{\ud x}{(1-x^q)^{1/q}} \qquad 0\leq y \leq 1.
  \label{eq:integral}
\ee 
As we will see below, this representation plays a crucial role
in the numerical estimation of $f_n(x)$.

Let the \emph{Schauder transform}, $T_q$, be the linear extension
of the mapping $e_n\longmapsto f_n$. Then $T_q:L^2(0,1)\longrightarrow
L^2(0,1)$ is an invertible bounded operator for all $q\geq 12/11$,
\cite[Theorem~1]{MR2240660}. Thus $\{f_n\}_{n\in\N}$ is a Riesz
basis of $L^2(0,1)$ for such range of the parameter $q$.  Further
evidence presented in \cite{MR2240660} suggests that in fact this is
also the case for all $q>1$, but at present this has not been proved
rigorously. Unless otherwise specified we will assume from now on that
$q\geq 12/11$.

The property of a Riesz basis ensures that every $g\in L^2(0,1)$ is
represented by a 
unique series expansion $ g=\sum_{n=1}^\infty a_n f_n$ which is
convergent in norm. The \emph{$q$-sine Fourier coefficients}, $a_n\in\R$, are
given explicitly by $a_n=\langle g,f^*_n\rangle$ where
$\{f^*_n\}_{n\in \N}$ is the basis dual to $\{f_n\}_{n\in \N}$. 
Since
$
   \delta_{jk}=\langle f_j,f_k^\ast\rangle=\langle T_qe_j,f_k^\ast\rangle=
    \langle e_j,T_q^\ast f_k^\ast\rangle
$
for all $j,k\in \N$, then $f^\ast_n=(T_q^{-1})^\ast e_n$.
It turns out that $f_n^*\not=f_n$ for $q\not=2$.

The following matrix representation of $T_q$ is fundamental to our
analysis. Let 
\[
   \tau_q(j)=\widehat{f_1}(j)=\sqrt{2}\int_0^1f_1(x)\sin(j\pi x) \ud x
\]
be the $j$th $2$-sine Fourier coefficient of $f_1(x)$. Then
the $k$th $2$-sine Fourier  coefficient of $f_n(x)$ is given by
\begin{equation} \label{entries_T}
\begin{aligned}
  \widehat{f_n}(k)&= \sqrt{2}\int_0^1f_1(nx)\sin(k\pi x) \ud x\\
  &= 2 \sum_{m\text{ odd}}\widehat{f_1}(m)\int_0^1 \sin(m\pi nx)
  \sin(k\pi x) \ud x \\
  &=\left\{ \begin{array}{ll} \tau_q(m) & \text{if }mn=k \text{ for
        some }m\text{ odd}\\ 0 & \text{otherwise.}\end{array} \right. 
\end{aligned}
\end{equation}
Hence $T_qe_n=\sum_{m=1}^\infty \tau_q(m)e_{mn}$ and therefore 
$T_q$ has a lower triangular matrix representation in the orthonormal
basis $\{e_n\}_{n\in\N}$. See figure~\ref{fig:dist_Tq}-(a).

\begin{figure}[hth]
\begin{center}
  (a) \hspace{0.48\textwidth} (b) \\
  \includegraphics*[width=0.52\textwidth]{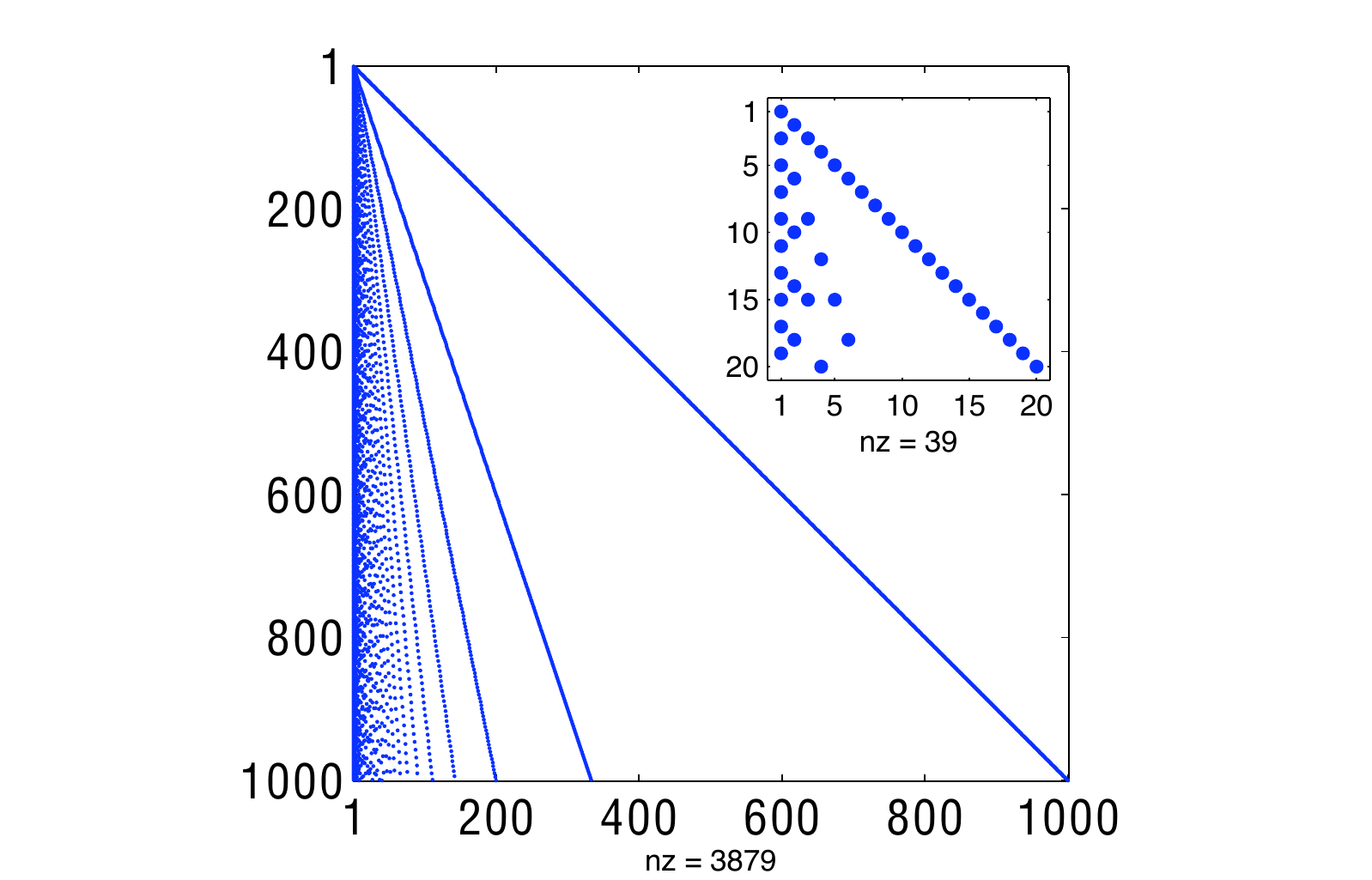}
  \includegraphics*[width=0.46\textwidth]{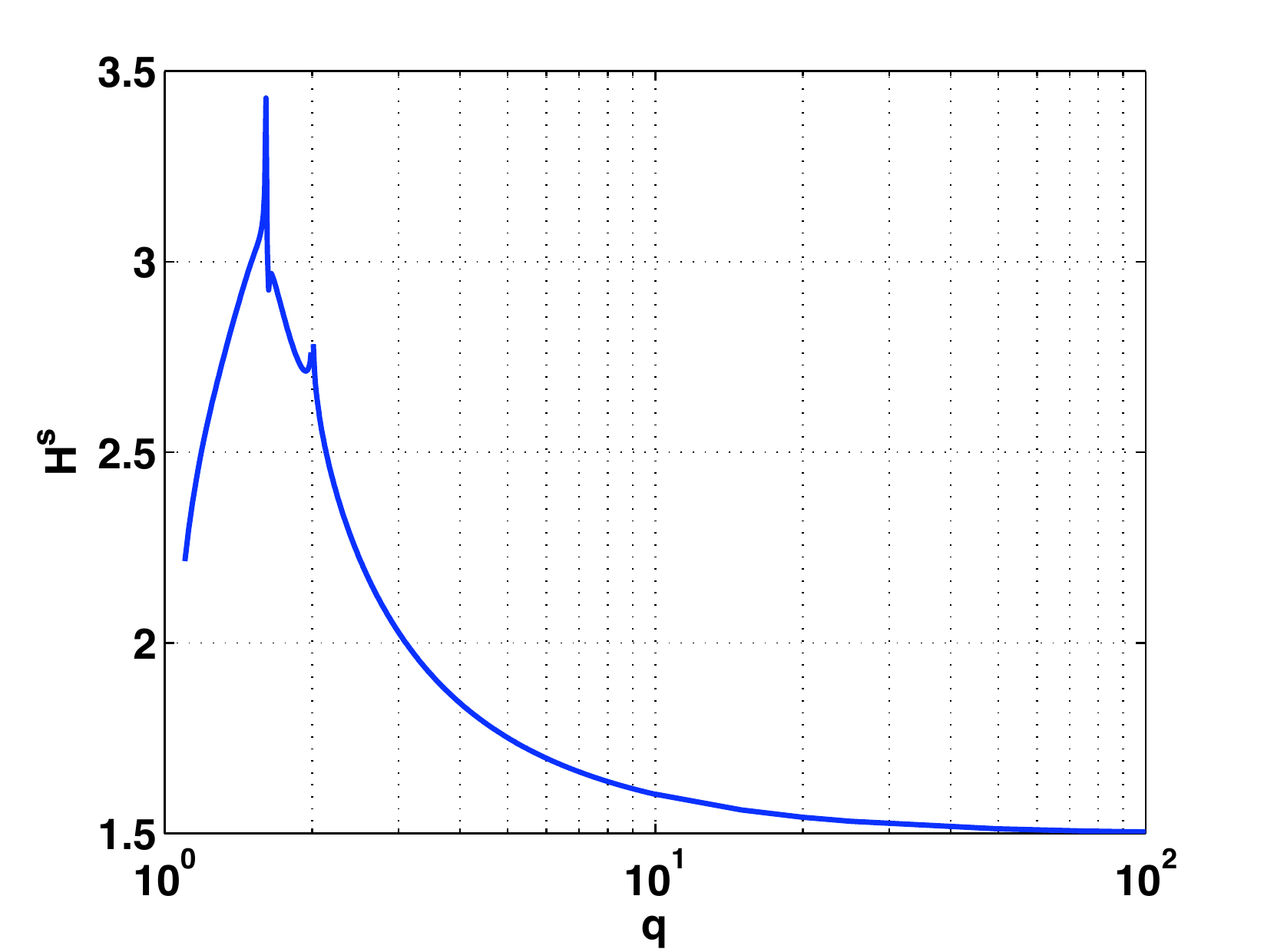}
  \caption{In (a) we plot the distribution of the non-zero entries 
    of a $1000\times 1000$ truncation of $T_q$. The insert corresponds to a
    $20\times 20$ truncation. The matrix entries are constant along
    each of the ``quasi-diagonals'' seen in the picture. 
    In (b) we plot $s$ against $q$ where $s$ is the numerically
    estimated  $H^s$ regularity of the basis function $f_1(x)$. Note
    that at $q=2$ the regularity is infinite.
\label{fig:dist_Tq}}
\end{center}
\end{figure}

\begin{remark} \label{q_infinity_case}
The basis of eigenvectors of the $\infty$-Laplace
eigenvalue problem are zig-zag functions,
\cite[Section~5]{MR2240660}. In this case we can write
$\tau_\infty(j)$ explicitly. As it turns out,
\[
     \lim_{q\to\infty}\tau_q(j)=(-1)^j\frac{8}{j^2\pi^2}=\tau_\infty(j).
\]
\end{remark}

Let $s>0$. Below we denote by $H^s_\per[0,1]$ the Sobolev space of
1-periodic functions $g\in L^2(0,1)$ such that $\sum_{j=1}^\infty
(1+j^2)^s|\widehat{g}(j)|^2<\infty$. 

\begin{lemma} \label{Sobolev_belongness} Let $f_1$ be the first
  $q$-sine function as defined above. If $1<q<2$, then $f_1\in
  H^{2}_\per[0,1]$. If $q>2$, then $f_1\in \bigcup_{s<3/2} H^s_\per[0,1]\setminus
  H^3_\per[0,1]$. If $q>4$, then $f_1\not\in H^2_\per[0,1]$.
\end{lemma}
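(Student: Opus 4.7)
The plan is to extract the precise local behavior of $f_1$ at its unique critical point $x = 1/2$ from the integral representation~\eqref{eq:integral}, and then combine it with integration by parts and the 1D Sobolev embedding. As $y \to 1^-$, the expansion $1 - s^q = q(1-s) + O((1-s)^2)$ together with~\eqref{eq:integral} gives $\tfrac{1}{2} - f_1^{-1}(y) \sim c_q (1-y)^{(q-1)/q}$, which on inversion (using the evenness of $f_1$ about $1/2$) yields
\[
   f_1(x) = 1 - C_q\bigl|x - \tfrac{1}{2}\bigr|^{q/(q-1)} + o\!\left(\bigl|x-\tfrac{1}{2}\bigr|^{q/(q-1)}\right), \qquad x\to\tfrac{1}{2}.
\]
Substitution into the Pythagorean identity~\eqref{phyt}, rewritten as $|f_1'(x)| = \pi_q(1 - |f_1(x)|^q)^{1/q}$, produces the matching asymptotics $|f_1'(x)| \sim c'_q \bigl|x-\tfrac{1}{2}\bigr|^{1/(q-1)}$ and $|f_1''(x)| \sim c''_q \bigl|x-\tfrac{1}{2}\bigr|^{(2-q)/(q-1)}$. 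Away from $x = 1/2$ the function $f_1$ is smooth, since there $f_1' \neq 0$ and~\eqref{sinep} reduces to a classical ODE.

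For $1 < q < 2$ the exponent $(2-q)/(q-1)$ is strictly positive, so $f_1''$ is bounded on $(0,1)$; substituting into the ODE directly gives the explicit bound $|f_1''| \leq \pi_q^2$. Hence $f_1 \in C^1([0,1])$ with $f_1'' \in L^\infty \subset L^2$ and $f_1(0) = f_1(1) = 0$, which is enough for the odd $2$-periodic extension to have an $L^2$ second derivative on the torus, so that $f_1 \in H^2_\per[0,1]$.

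For $q > 2$ the exponent $(q-2)/(q-1)$ lies in $(0,1)$, making $f_1''$ locally integrable at $1/2$, so $f_1'' \in L^1(0,1)$. Two integrations by parts in the sine Fourier coefficient (all boundary contributions vanish because $f_1(0) = f_1(1) = 0$ and $\sin(j\pi) = 0$) yield
\[
  \widehat{f_1}(j) = -\frac{\sqrt{2}}{(j\pi)^2}\int_0^1 f_1''(x) \sin(j\pi x)\,\ud x,
\]
and hence $|\widehat{f_1}(j)| \leq C j^{-2}$. Consequently $\sum_j j^{2s}|\widehat{f_1}(j)|^2 \leq C \sum_j j^{2s-4}$ converges for every $s < 3/2$, so $f_1$ lies in $H^s_\per[0,1]$ for all such $s$.

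For the negative statements I would appeal to the 1D Sobolev embedding $H^r_\per[0,1] \hookrightarrow C^{r-1/2}[0,1]$. Membership in $H^3_\per$ would force $f_1''$ to be continuous, contradicting the blow-up $|f_1''(x)| \to \infty$ at $x = 1/2$ for every $q > 2$. Similarly, $H^2_\per$ would force $f_1' \in C^{0,1/2}$, but the asymptotic $|f_1'(x)| \sim c'_q\bigl|x-\tfrac{1}{2}\bigr|^{1/(q-1)}$ combined with the sign change of $f_1'$ across $1/2$ gives $f_1'$ Hölder exponent exactly $1/(q-1)$ there, which is strictly less than $1/2$ whenever $q > 3$ and in particular for $q > 4$. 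The main technical obstacle is the rigorous justification of Step~1: since $q/(q-1)$ is not an integer for $q \neq 2$, the leading fractional-power term is not a Taylor contribution, and one must exclude cancellation with smoother pieces of the expansion. This is handled cleanly by iterating~\eqref{phyt} -- which determines $f_1'$ and $f_1''$ in closed form from $f_1$ -- rather than by formal term-by-term differentiation of the asymptotic.
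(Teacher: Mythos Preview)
Your argument is correct, and for the positive assertions ($f_1\in H^2_\per$ when $1<q<2$, and $f_1\in H^s_\per$ for all $s<3/2$ via $|\widehat{f_1}(j)|\le Cj^{-2}$) it coincides with the paper's proof: both integrate by parts twice and bound $f_1''$ using the closed form $f_1''(x)=-\pi_q^2\,f_1(x)^{q-1}(1-f_1(x)^q)^{(2-q)/q}$ that follows from~\eqref{phyt}. The $H^3$ exclusion is also argued the same way in both, via the blow-up of $f_1''$ at $x=1/2$.

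Where you genuinely diverge from the paper is in the $H^2$ exclusion for large $q$. The paper proceeds by a direct lower bound on $\int_0^{1/2}|f_1''|^2$, substituting the crude inequality $f_1(x)\ge 2x$ into the closed form for $f_1''$; this produces an integrand comparable to $(1/2-x)^{2(2-q)/q}$, which diverges only once $q>4$. You instead extract the sharp local behaviour $|f_1''(x)|\sim c_q''\,|x-1/2|^{(2-q)/(q-1)}$ from the integral representation, and then invoke the embedding $H^2_\per\hookrightarrow C^{1,1/2}$ to rule out $f_1\in H^2_\per$ as soon as $1/(q-1)<1/2$, i.e.\ already for $q>3$. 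This is strictly stronger than the stated lemma and in fact matches the numerical threshold the paper reports in \figref{fig:dist_Tq}-(b), where it is explicitly noted that the bounds of the lemma are not optimal. The trade-off is that your route requires controlling the error term in the fractional-power expansion of $f_1$ at $1/2$, whereas the paper's estimate needs nothing beyond the elementary inequality $f_1(x)\ge 2x$; but as you observe, the Pythagorean identity gives $f_1'$ and $f_1''$ \emph{exactly} as functions of $f_1$, so the asymptotics follow rigorously without term-by-term differentiation.
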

\begin{proof}
A straightforward argument involving integration by parts yields
\[
   \tau_q(j)=-\frac{2\sqrt{2}}{j^2\pi^2}\int_0^{1/2} f_1''(x)\sin(j\pi x) 
   \ud x.
\]
Then
\begin{equation} \label{est_tau}
\begin{aligned}
|\tau_q(j)|&\leq \frac{2\sqrt{2}}{j^2\pi^2}\int_{0}^{1/2} |f_1''(x)| \ud x =
-\frac{2\sqrt{2}}{j^2\pi^2} \int_0^{1/2} f_1''(x) \ud x \\
& = -\frac{2\sqrt{2}}{j^2\pi^2}\left[f_1'(x)\right]_{0}^{1/2}=
\frac{2\sqrt{2}\pi_q}{j^2\pi^2}.
\end{aligned}
\end{equation}
Hence $\tau_q(j)=o(j^{-2})$ as $j\to
\infty$ and $f_1\in \bigcup_{s<3/2} H^s_\per[0,1]$.  From \eqref{phyt} it follows that $f''_1(x)=h(f_1(x))$ for
$h(y)=-\pi_q^2 y^{q-1}(1-y^q)^{\frac{2-q}{q}}$. 

Let $1<q<2$. Then
\[
    \int_0^{1/2} |f_1''(x)|^2 \ud x \leq \frac12 
\max_{y\in[0,1]} |h(y)|^2=\frac{\pi_q^4}{2}, 
\]
so $f_1\in  H^{2}_\per[0,1]$.

Let $q>2$. Since $\lim_{x\to \frac12^\pm} f_1''(x)=\mp \infty$, then
$f_1\not \in H^3_\per[0,1]$. Moreover, $f_1(x)\geq 2x$ for $0\leq x\leq \frac12$.
Then
\begin{align*}
   \int_0^{1/2} |f_1''(x)|^2 \ud x &\geq \int_0^{1/2} (2x)^{2(q-1)}
   (1-(2x)^{q})^{\frac{2(2-q)}{q}}\ud x. 
\end{align*}
If $q>4$, the integral on the right hand side diverges and so
$f_1\not\in H^2_\per[0,1]$.
\end{proof}

Evidently the $H^s$ regularity for $f_1(x)$ found in
lemma~\ref{Sobolev_belongness} is not optimal. 
{Figure~\ref{fig:dist_Tq}-(b)} shows a numerical estimation of the
precise value of $s(q)$, such that $f_1\in H^r_\per[0,1]$ for $r<s(q)$ and
$f_1\not\in H^r_\per[0,1]$ for $r>s(q)$. The data for this
graph was obtained by computing the decay rate of $\tau_q(j)$ for a
large truncation of $T_q$. A thorough investigation closely related to
this lemma in the higher dimensional context 
can be found in \cite{ref_3,ref1_1} and references therein.

In the large $q$ we have a limit of $s=3/2$ and this is confirmed by
Remark~\ref{q_infinity_case}.  At $q=2$ we simply have $f_1=\sin(\pi
x)$ and so $f_1$ is in $H^s_\per[0,1]$ for all $s$. According to lemma~\ref{Sobolev_belongness},
the curve should remain below $s=3$ as $q\to 2^+$. For $q>3$ the graph suggests $f_1\not\in
H^{2}_\per[0,1]$. However, if $q<3$ it suggests 
$s(q)>2$. As $q\to 1$ the regularity drops and the limit seems to
approach $s=2$. There is an interesting ``peak'' of
regularity around $q=1.6$ which we can not presently  explain.

The 2-sine Fourier coefficients of $f^*_n(x)$ are given by the $n$-th
columns of $(T_q^{-1})^{*}$.  This operator has an upper
triangular representation in the 2-sine basis. Then $f^*_n(x)$ are
trigonometric polynomials of order $n$. In fact,
$f^*_2(x)=\tau_q(1)^{-1}\sin(2\pi x)$ are parallel for all $q>1$.
Unlike the $q$-sine functions, not all dual $q$-sine functions have
the same periodicity structure.

We now describe a stable numerical procedure for computing these two
bases.  The \qsine functions can be approximated by first estimating
$f_1$ using \eqref{eq:integral}. Numerical integration yields
$f_1^{-1}$ on $[0,1/2]$. Although the integral is singular at $y=1$,
the value $f_1^{-1}(1)=1/2$ is known, so we do not need to consider
quadrature points too close to this singularity.  In our numerical
procedure we chose a fine uniform grid and apply a cumulative
Simpson's rule. This gives an approximation $\tilde{f}_1$ of $f_1$ for
$x\in [0,0.5]$ with a controlled tolerance and by symmetry we obtain $\tilde{f}_1$ for
$x\in[0,1]$. Note that $\tilde{f}_1$ is given on a non-uniform grid.
The Pythagorean identity \eqref{phyt} immediate yields the derivative
$f'_1$ of $f_1$ and hence we can use it to approximate the former with
a $ \widetilde{f_1'}$, defined also on the non-uniform grid.

Once we have constructed $\tilde{f}_1$ and $\widetilde{f_1'}$, we use
periodicity and symmetry to find corresponding approximations of $f_n$
and $f'_n$ for $n>1$. This involves considering scaled copies
$\tilde{f}_1$ and $\widetilde{f_1'}$, to form $\tilde{f}_n$ and
$\widetilde{f_n'}$. Here the number of non-uniform grid points on
$[0,1]$ grows with each $n=2:N$.

To obtain $f_n$ and $f_n'$ on a uniform grid $x_j=jh$ for $j=0:J$,
rather than the non-uniform grid that arises from the numerical
integration, we have considered numerical interpolation by piecewise
cubic polynomials. This gives $\{f^h_n\}_{n=1}^N$ and
$\{(f'_n)^h\}_{n=1}^N$ defined at $x=x_j$. The former is the
approximated basis and the latter the corresponding derivatives that
we use for further computation. 

Figure~\ref{fig:basisQ} was generated with an implementation of the
numerical scheme just described on an uniform grid with $J=4000$
points ($h=2.5\times 10^{-4}$). The integral \eqref{eq:integral} was
approximated with $2\times10^5$ points.

The dual basis is found from an $N\times N$ truncation, $T_q^N$, of
the Schauder transform.  In practice, we first compute
$\tau_q(j)$ and assemble $T^N_q$.  Then we define approximations
$(f^*_n)^h$ for $n=1:N$, as the trigonometric polynomials whose $k$th
2-sine Fourier coefficients are the $(n,k)$ entry of the matrix
$(T_q^N)^{-1}$.

\begin{remark}
In our numerical approximation of the set of basis functions we are
careful to fully resolve oscillations on the  basis
function $f_N$, taking at least $20$ mesh points per wavelength and for most
computations $100$ per wavelength. 
For $N\leq 50$ we use $h=(100N)^{-1}$ and so resolve each
oscillation in the basis function with $100$ spatial points. For
$N>50$ we use $h=(20N)^{-1}$ and so resolve with $20$ points.  
We also examined convergence of
orthogonality of the basis and dual in the spatial discretization and
noted $O(h^2)$ for $q\approx 10$ through to $O(h^4)$ for
$q\approx 1.4$. 
\end{remark}

\section{Approximation of source terms}
\label{approx_est}
Any given $g\in L^2(0,1)$ can be approximated by either  
\begin{gather*} 
g(x) \approx g^*_N(x)=\sum_{j=1}^N \langle g,f_j \rangle f_j^*(x)
\qquad \text{ or} \qquad
g(x) \approx g_N(x)=\sum_{j=1}^N \langle g,f_j^* \rangle f_j(x)
\end{gather*}
for large $N$. 
These two expansions converge as $N\to\infty$ in the
norm of $L^2(0,1)$ and also pointwise for almost all $x\in[0,1]$.
Unlike in the linear case corresponding to $q=2$, 
the rates of decrease of $\|g-g^*_N\|$ and $\|g-g_N\|$ 
can be very different when $q\not=2$.

Since the dual basis $\{f^*_n\}_{n\in\N}$ comprises trigonometric
polynomials, on the one hand we can formulate the following 
natural statement.

\begin{lemma} \label{dual} Let $g\in L^2(0,1)$. For all $N\in \N$,
\[
    \|g-g^*_{N}\| \leq \frac{\|T_q^{-1}\|\pi_q}{2\sqrt{2}} \Big\|g-\sum_{n=1}^N \widehat{g}(n) e_n \Big\|.
\]
\end{lemma}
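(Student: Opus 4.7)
The plan is to exhibit the map $g\mapsto g_N^{\ast}$ as a conjugation of an orthogonal truncation in the $2$-sine basis by the Schauder transform, and then to exploit the triangular structure of $T_q$ revealed by \eqref{entries_T} to show that only the $2$-sine Fourier tail of $g$ can contribute to the error.

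First I would reformulate $g_N^{\ast}$ via the adjoint relations $\langle g,f_j\rangle=\langle T_q^{\ast}g,e_j\rangle$ and $f_j^{\ast}=(T_q^{-1})^{\ast}e_j$. Writing $P_N$ for the orthogonal projection onto $\mathrm{span}\{e_1,\dots,e_N\}$, this gives $g_N^{\ast}=(T_q^{-1})^{\ast}P_N T_q^{\ast}g$. Since $g=(T_q^{-1})^{\ast}T_q^{\ast}g$, subtracting yields $g-g_N^{\ast}=(T_q^{-1})^{\ast}(I-P_N)T_q^{\ast}g$ and hence
\[
   \|g-g_N^{\ast}\|\leq \|T_q^{-1}\|\,\|(I-P_N)T_q^{\ast}g\|.
\]

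Next I would invoke \eqref{entries_T}: $T_q$ is lower triangular in the orthonormal basis $\{e_n\}_{n\in\N}$, so $T_q^{\ast}$ is upper triangular, and in particular $T_q^{\ast}e_k\in\mathrm{span}\{e_1,\dots,e_k\}$ for every $k\in\N$. Therefore $T_q^{\ast}P_N g$ lies in the range of $P_N$, whence $(I-P_N)T_q^{\ast}P_N g=0$ and
\[
   (I-P_N)T_q^{\ast}g=(I-P_N)T_q^{\ast}(I-P_N)g.
\]
Since $\|I-P_N\|\leq 1$ and $(I-P_N)g=g-\sum_{n=1}^N\widehat{g}(n)e_n$, this step reduces matters to a uniform (in $N$) bound on $\|T_q^{\ast}\|$.

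Finally I would estimate $\|T_q^{\ast}\|$. From \eqref{entries_T} we have $T_q^{\ast}e_k=\sum_{m\text{ odd},\,m\mid k}\tau_q(m)e_{k/m}$, i.e.\ $T_q^{\ast}=\sum_{m\text{ odd}}\tau_q(m)R_m$, where $R_m$ is the subsampling operator defined by $R_m(\sum_k c_k e_k)=\sum_k c_{mk}e_k$. Parseval's identity immediately yields $\|R_m\|\leq 1$, so that $\|T_q^{\ast}\|\leq \sum_{m\text{ odd}}|\tau_q(m)|$. Combining the pointwise bound in \eqref{est_tau} with $\sum_{m\text{ odd}}m^{-2}=\pi^{2}/8$ gives
\[
   \|T_q^{\ast}\|\leq \frac{2\sqrt{2}\pi_q}{\pi^{2}}\cdot\frac{\pi^{2}}{8}=\frac{\pi_q}{2\sqrt{2}},
\]
and stringing the three bounds together yields the claim. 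I do not anticipate a serious obstacle: the only non-mechanical ingredient is spotting that lower triangularity lets us swap the generic factor $\|T_q^{\ast}P_N\|$ for the uniform $\|T_q^{\ast}\|$ acting on the $2$-sine tail, and this is precisely what makes the constants in \eqref{est_tau} match $\pi_q/(2\sqrt{2})$ exactly rather than producing a larger value.
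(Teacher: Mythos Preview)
Your argument is correct and reaches the same constant, but the technical core differs from the paper's. Both proofs begin from the same identity $g-g_N^{\ast}=(T_q^{-1})^{\ast}(I-P_N)T_q^{\ast}g$ and pull out the factor $\|T_q^{-1}\|$. From there the paper works coefficient-wise: it writes $\langle T_q^{\ast}g,e_n\rangle=\sum_{k}\tau_q(k)\widehat{g}(nk)$, applies a weighted Cauchy--Schwarz inequality $|\sum_k \tau_q(k)\widehat g(nk)|^2\le (\sum_k|\tau_q(k)|)\sum_k|\tau_q(k)||\widehat g(nk)|^2$, interchanges the double sum over $n>N$ and $k$, and finally uses $\sum_{n>N}|\widehat g(nk)|^2\le\sum_{n>N}|\widehat g(n)|^2$. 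Your route is more operator-theoretic: you use triangularity to pass from $(I-P_N)T_q^{\ast}g$ to $(I-P_N)T_q^{\ast}(I-P_N)g$, and then bound $\|T_q^{\ast}\|$ globally via the decomposition $T_q^{\ast}=\sum_{m\ \mathrm{odd}}\tau_q(m)R_m$ into subsampling contractions. Your approach is cleaner and yields as a by-product the explicit bound $\|T_q\|\le \pi_q/(2\sqrt{2})$, while the paper's argument avoids naming $\|T_q^{\ast}\|$ and instead tracks how the triangular structure feeds only tail coefficients $\widehat g(nk)$ with $nk>N$ into the error; the two are ultimately the same mechanism viewed at different levels of abstraction.
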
 
\begin{proof} By definition
$
    g-g^*_N=\sum_{n=N+1}^\infty \langle T_q^* g,e_n\rangle (T_q^*)^{-1}e_n.
$
Since the matrix associated to $T_q^*$ is upper triangular (see Section~\ref{2}), 
\[
    \langle T_q^*g,e_n\rangle =\sum_{k=n}^\infty [T_q^*]_{nk}\widehat{g}(k)=
    \sum_{k=1}^\infty \tau_q(k)\widehat{g}(nk).
\]
According to \eqref{est_tau}, we have 
$\sum_{k=1}^\infty |\tau_q(k)|\leq \frac{\pi_q}{2\sqrt{2}}$.
Hence
\begin{align*}
  |\langle T_q^*g,e_n\rangle|^2&=\Big|\sum_{k=1}^\infty \tau_q(k)\widehat{g}(nk)\Big|^2
  = \Big|\sum_{k=1}^\infty \tau_q(k)^{1/2}\tau_q(k)^{1/2}\widehat{g}(nk)\Big|^2 \\
  &\leq \Big(\sum_{k=1}^\infty |\tau_q(k)| \Big)
  \Big(\sum_{k=1}^\infty |\tau_q(k)||\widehat{g}(nk)|^2\Big)
  \leq \frac{\pi_q}{2\sqrt{2}} \sum_{k=1}^\infty |\tau_q(k)||\widehat{g}(nk)|^2.
\end{align*}
Thus,
\begin{align*}
   \|g-g_N^*\|^2&\leq \|T_q^{-1}\|^2\sum_{n=N+1}^\infty|\langle 
T_q^*g,e_n\rangle |^2 \leq
   \|T_q^{-1}\|^2\frac{\pi_q}{2\sqrt{2}} \sum_{n=N+1}^\infty \sum_{k=1}^\infty
   |\tau_q(k)||\widehat{g}(nk)|^2 \\
   &=\|T_q^{-1}\|^2\frac{\pi_q}{2\sqrt{2}} \sum_{k=1}^\infty \sum_{n=N+1}^\infty
   |\tau_q(k)||\widehat{g}(nk)|^2 
   = \|T_q^{-1}\|^2\frac{\pi_q}{2\sqrt{2}} \sum_{k=1}^\infty 
   |\tau_q(k)| \sum_{n=N+1}^\infty |\widehat{g}(nk)|^2 \\
   &\leq \|T_q^{-1}\|^2\frac{\pi_q}{2\sqrt{2}}  \sum_{k=1}^\infty|\tau_q(k)| 
   \sum_{n=N+1}^\infty |\widehat{g}(n)|^2
   = \|T_q^{-1}\|^2\frac{\pi_q^2}8  \sum_{n=N+1}^\infty |\widehat{g}(n)|^2 .
\end{align*}
\end{proof}

Therefore the $q$-sine dual expansion of any
$g\in C^\infty$ converges super-poly\-nomially fast.
On the other hand, however, it is not difficult to construct
examples of smooth functions $g$ with a subsequence of
 $q$-sine Fourier coefficients decaying slowly.

\begin{lemma} \label{basis}
Let $g(x)=e_1(x)$. If $j$ is prime, then
$
   \langle g,f_j^*\rangle=\tau_{q}(j) \tau_q(1)^{-1}.
$
\end{lemma}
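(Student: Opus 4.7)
The plan is to reduce the pairing $\langle g,f_j^*\rangle$ to a single column of the inverse Schauder transform and then exploit the multiplicative/divisor structure of the matrix $T_q$ in the basis $\{e_n\}$. Since by definition $f_j^*=(T_q^{-1})^*e_j$, one has
\[
   \langle g,f_j^*\rangle=\langle g,(T_q^{-1})^*e_j\rangle=\langle T_q^{-1}g,e_j\rangle,
\]
so the claim becomes a statement about the $j$-th $2$-sine Fourier coefficient of the function $h=T_q^{-1}e_1$. Equivalently, letting $h=\sum_n h_n e_n$, the task is to solve the triangular system $T_q h=e_1$ and read off $h_j$.

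Next I would insert the explicit matrix representation $T_q e_n=\sum_{m\text{ odd}}\tau_q(m)\,e_{mn}$ from \eqref{entries_T}. Collecting coefficients of $e_k$ in $T_qh=e_1$ gives the recurrence
\[
   \sum_{\substack{n\mid k\\ k/n\text{ odd}}} \tau_q(k/n)\,h_n=\delta_{k,1},
\]
which, because of the lower-triangular shape, determines $h_n$ uniquely by induction on $n$. From $k=1$ only the pair $(m,n)=(1,1)$ survives, yielding $h_1=\tau_q(1)^{-1}$, which pins down the first entry and matches the normalization appearing in the statement.

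The key observation is then that the divisor sum collapses when $j$ is prime. For an odd prime $j$ the only factorizations $mn=j$ with $m$ odd are $(1,j)$ and $(j,1)$, so the recurrence reduces to a two-term identity relating $h_j$ to the already-known $h_1$ and to $\tau_q(j)$, from which $h_j$ can be written in closed form in terms of $\tau_q(j)$ and $\tau_q(1)$. For $j=2$ the only admissible pair is $(1,2)$, so $h_2$ is forced to be zero; this is consistent with the statement because $f_1$ is even about $x=1/2$, hence $\tau_q(2)=0$ (its $2$-sine coefficients at even indices all vanish, a fact one may wish to record as a short preliminary).

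No genuinely hard step is hidden here; the whole content is the bookkeeping with divisors. The only place where one has to be a little careful is the parity constraint ``$k/n$ odd'' in the divisor sum, which must be maintained throughout the argument and which is precisely what makes the two-term collapse at a prime index work exactly as claimed. Once this is observed, the computation delivers $\langle g,f_j^*\rangle$ in the stated closed form and, as a by-product, makes it transparent that the $q$-sine Fourier coefficients of $e_1$ along the subsequence of prime indices inherit the (slow) decay rate of $\tau_q(j)$, which is the qualitative point the lemma is being invoked to establish.
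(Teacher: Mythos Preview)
Your approach is essentially the same as the paper's: both reduce $\langle e_1,f_j^*\rangle$ to an entry of $T_q^{-1}$ via $f_j^*=(T_q^{-1})^*e_j$, then exploit the lower-triangular divisor structure of $T_q$ in the $\{e_n\}$ basis so that, for $j$ prime, only the two factorizations $(1,j)$ and $(j,1)$ survive and forward substitution yields the entry in closed form. The only cosmetic difference is that the paper phrases this as reading off the $j$th row of $T_q^{-1}$ by Gaussian elimination, whereas you solve $T_qh=e_1$ for the first column; your separate handling of $j=2$ via $\tau_q(2)=0$ is a harmless addendum the paper leaves implicit.
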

\begin{proof}
  Since $T_q$ has an (infinite) lower triangular matrix representation
  in the orthonormal basis $\{e_n\}_{n\in \N}$, we can find the entries
  of the corresponding matrix representation of $T_q^{-1}$ by pivoting
  and forward substitution (Gaussian elimination). It is readily seen that $T_q^{-1}$ is
  necessarily lower triangular and its diagonal should be constant and
  equal to $\tau_q(1)^{-1}$. 

  Assume that $j$ is prime. According to \eqref{entries_T}, the only
  non-zero entries in the $j$th row of $T_q$ are $\tau_q(j)$ in the
  first position and $\tau_q(1)$ in the $j$th position. Therefore, the
  only non-zero entries in the $j$th row of $(T_q)^{-1}$ are
  $-\frac{\tau_q(j)}{\tau_q(1)}$ in the first position and $\tau_q(1)^{-1}$
  in the $j$th position. As the Fourier sine coefficients of $f_j^*$
  are obtained from the $j$th column of $(T_q^*)^{-1}$, the desired
  conclusion follows.
\end{proof}

By virtue of lemma~\ref{Sobolev_belongness}, the prime
$q$-sine Fourier coefficients of $\sin(\pi x)$ for $q>2$ can not
decrease faster than $j^{-5/2}$ in the large $j$ limit. Observe that
this is in stark contrast with the most elementary results in the
numerical approximation of solutions of differential equations by
orthogonal spectral methods.

\begin{remark} \label{using_gs} The finite set of basis functions
  $f_n$ and dual $f_n^*$ for $n=1:N$ generate corresponding $N$
  dimensional subspaces $V_N$ and $V_N^*$ of $L^2(0,1)$. Instead of
  computing directly with these non-orthogonal bases, one can apply
  the Gram-Schmidt algorithm in order to obtain orthonormal bases of
  these subspaces.  This has a numerical advantage of not needing to
  store both the basis and dual. We also considered this approach, however
  we found little advantage in terms of accuracy.
\end{remark}

Let us now consider various numerical tests on the approximation of
regular functions by $\{f_n\}_{n\in\N}$ and $\{f_n^*\}_{n\in\N}$.
Once the bases have been obtained on uniform mesh,
we can examine their approximation properties numerically by looking
at the decay of suitable residual for benchmark sources $g\in L^2(0,1)$.

\Figref{fig:res1} shows the typical outcomes of an experiment to
determine the dependence in $q$ of the $L^2(0,1)$ residual. We have
fixed here $N=40$, varied $q=1:100$, and computed residuals 
in the approximation of the following four functions: 
\begin{equation} 
\label{functions}
\begin{gathered}
g_\mathrm{a}(x)=f_{1}(x)+(2.5) f_{10}(x) \qquad \text{for }\quad q=10,\\
g_{\mathrm{b}}(x)=\left\{\begin{array}{ll} 1 & x\in [1/4,3/4] \\
0 & \text{otherwise} \end{array}\right. \\
g_{\mathrm{c}}(x)=\left\{\begin{array}{ll} (7/3)x  & x\in [0,3/7] \\
-(21/4)x+13/4& x\in[3/7,7/14] \\ (21/4)x-2& x\in[7/14,4/7] \\ 
-x+11/7 & x\in [4/7,1] \end{array} \right.\\
g_{\mathrm{d}}(x)=  (q-1)(k\pi_q)^q f_1|f_1|^{(q-2)} \qquad \text{for }\quad q=3,
\end{gathered}
\end{equation}
in (a)-(d) respectively. In the figure we include both the basis and
its dual, as well analogous calculations with the orthogonalized bases
of $V_{40}$ and $V_{40}^*$. In (a) we see an optimal $\qopt=10$ as
expected, in (b), (c) and (d) we increase the regularity of $g$ and
see $\qopt$ decrease with values  of $4.25,\,2.9,\,2.55$ for (b), (c) and
(d) respectively.

\begin{figure}[hth]
\begin{center}
  (a) \hspace{0.48\textwidth} (b)  \\
  \includegraphics*[width=0.48\textwidth]{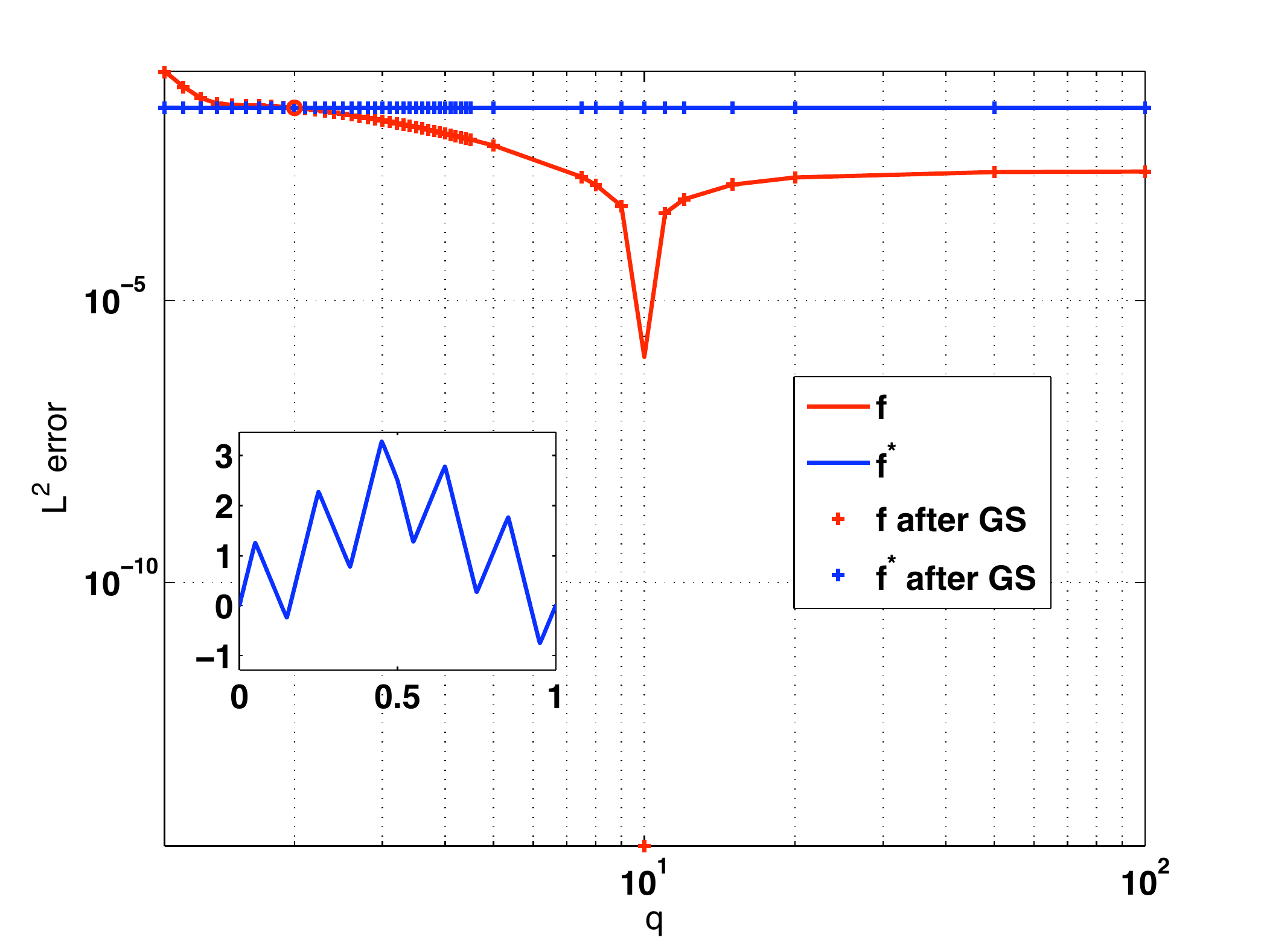}
  \includegraphics*[width=0.48\textwidth]{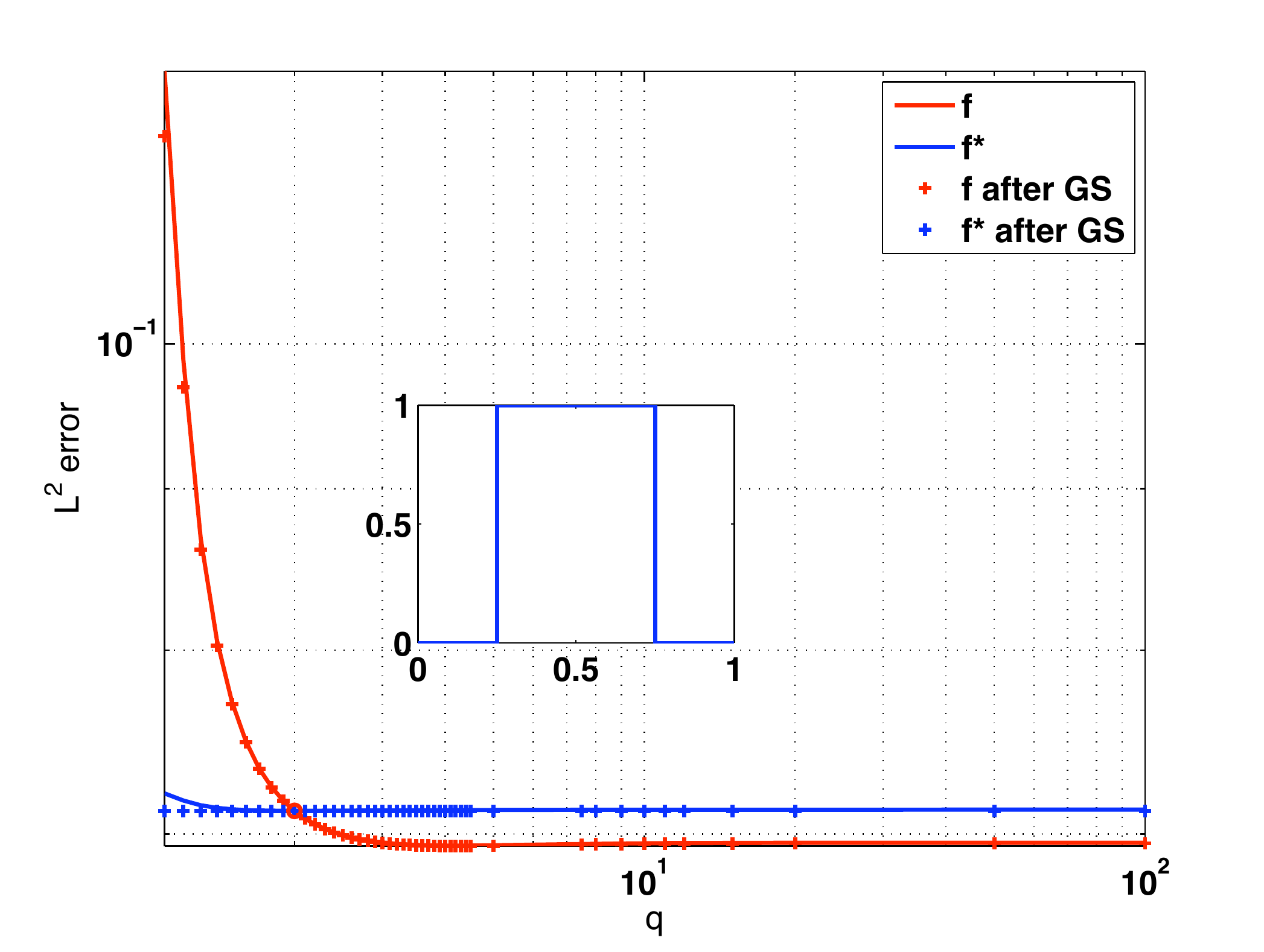}\\
  (c) \hspace{0.48\textwidth} (d)  \\
  \includegraphics*[width=0.48\textwidth]{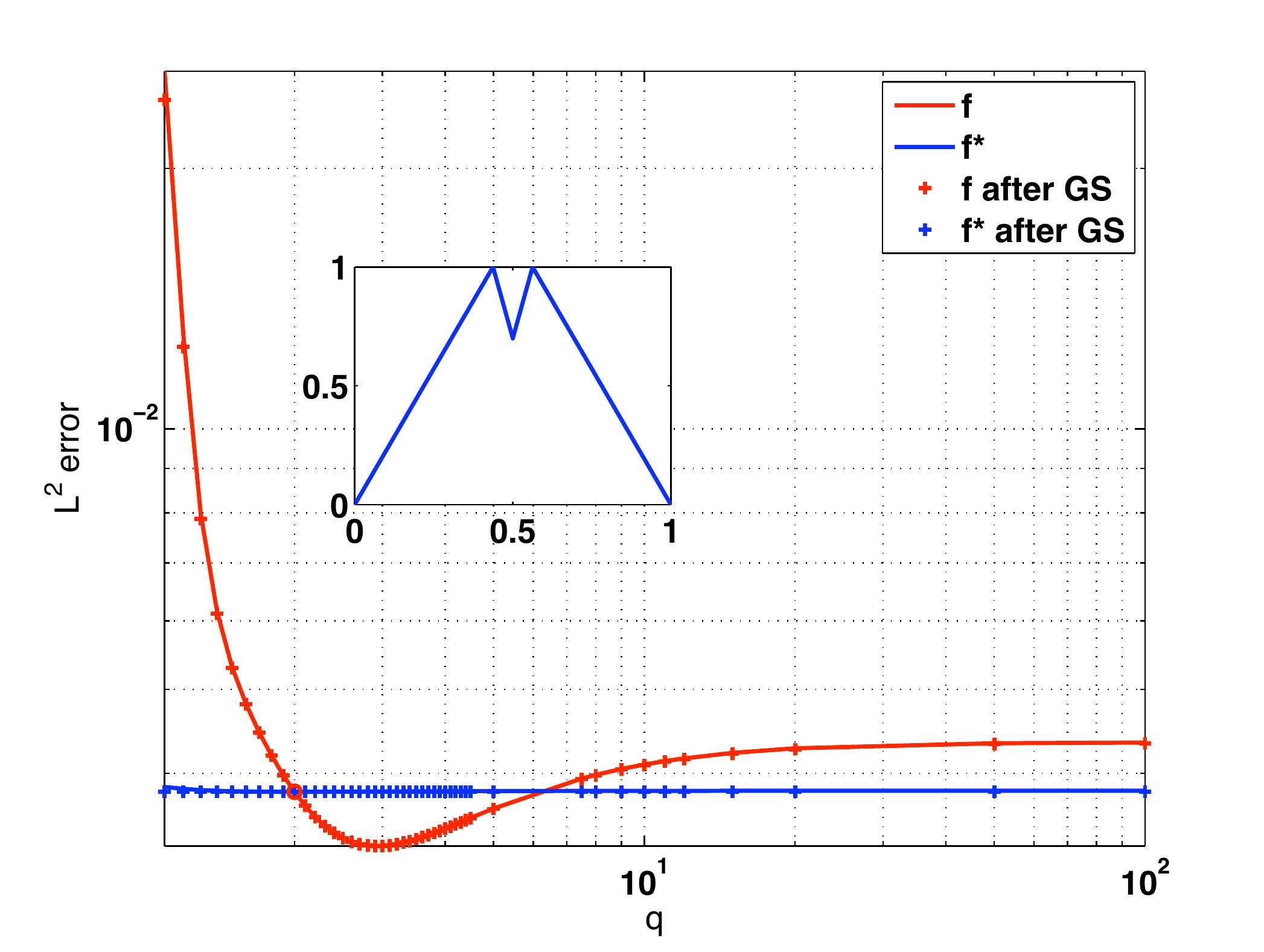}
  \includegraphics*[width=0.48\textwidth]{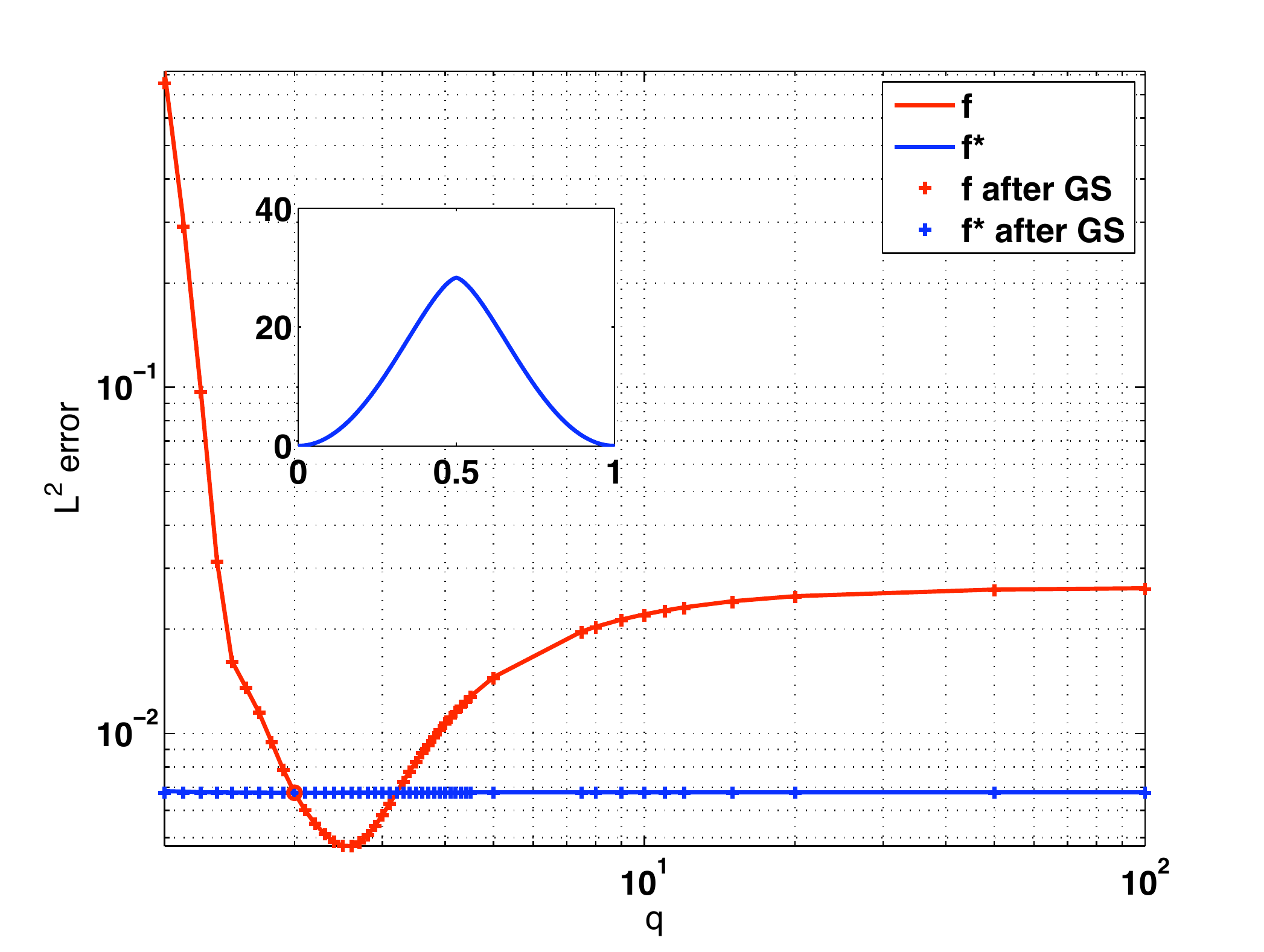}\\
  \caption{We vary a $q$-sine basis and dual basis and examine how the
    residual changes using $N=40$ modes for (a) $g$ a combination of
    two $10$-sine basis elements, (b) a piece-wise constant function
    $\qopt\approx 4.25$, (c) a piece-wise linear continuous
    $\qopt=2.9$ and (d) a differentiable with discontinuous derivative
    function $\qopt=2.55$. See 
    \eqref{functions}. \label{fig:res1} }
\end{center}
\end{figure}
This experiment gives a general insight about the $q$-behavior of
$L^2$-residuals in the approximation of functions with different
degrees of regularity by $q$-sine bases and their duals. For the
simple functions considered, our calculations indicate the following
general behavior of the $q$-sine basis:
\begin{itemize}
\item[(i)] as $q\to 1$ the residual deteriorates,
\item[(ii)] there is always a single minimum corresponding to an
  optimal $q=q_{\mathrm{opt}}$,
\item[(iii)] as $q\to \infty$ the residual curve becomes asymptotically
 constant with no local maximum for $q>2$.
\end{itemize}
In contrast, the approximation error is almost constant in $q$ for the
$q$-sine dual basis. This is indeed a consequence of Lemma~\ref{dual}
and the fact that $f_n^*$ are trigonometric polynomials.  Our tests
indicate that there does not seem to be a clear advantage in
orthogonalizing the basis or the dual basis for errors with an order
of magnitude above $10^{-5}$.

\begin{table}[hth]
\begin{tabular}{c||l|l|l|l|l}
$g(x)$ & $q=1.8$  & $q=2$ (exact)& $q=3$ & $q=5$ & $q=10$ \\\hline
$g_{\mathrm{b}}(x)$ & -0.4862  & -0.4905 (-0.5)  &-0.5005&-0.5052&-0.5069\\
$g_{\mathrm{c}}(x)$   & -1.4266  &   -1.4474 (-1.5)&-1.4959&-1.4842&-1.4438\\
$g_{\mathrm{d}}(x)$ & -1.9368  & -1.9952 (-2)    &-1.9826&-1.5934&-1.4595\\ 
$\sin(\pi x)$   &  -2.2778 & NaN  $(-\infty)$    &-1.9560&-1.6237&-1.4988\\
\end{tabular}
\caption{\label{tab:Nrate} Rates of convergence in $N$ 
for $g$ with different degrees of regularity and selected values of $q$.}
\end{table}

In \tabref{tab:Nrate} we have estimated $\alpha>0$ such that
$\|g-g_N\|<\beta N^{-\alpha}$ where $\beta>0$ is independent of $N$.
The data indicates that for $g\in H^1_\per[0,1]$, $q_{\mathrm{opt}}\approx
2$.  Moreover, as $q$ increases, we should expect $\alpha$ to decrease
and stabilize always below $1.5$ for $g(x)=\sin(\pi x)$.  This is
indeed suggested by lemma~\ref{Sobolev_belongness},
figure~\ref{fig:dist_Tq}-(b) and lemma~\ref{basis}, and it is
confirmed by the last row of the table.

\begin{figure}[hth]
  \begin{center}
    (a) \hspace{0.32\textwidth} (b) \hspace{0.32\textwidth} (c)  \\
    \includegraphics*[width=0.32\textwidth]{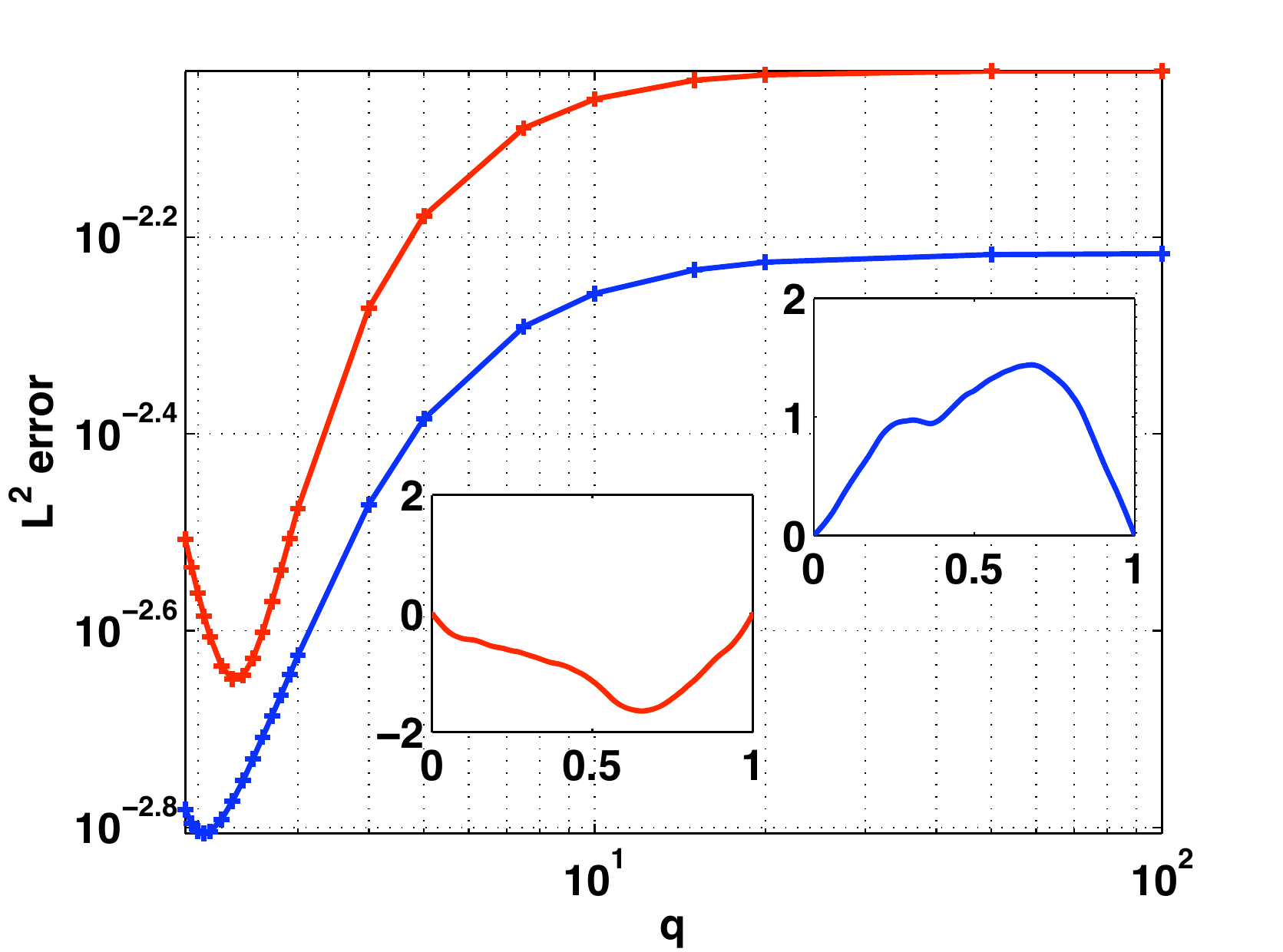}
    \includegraphics*[width=0.32\textwidth]{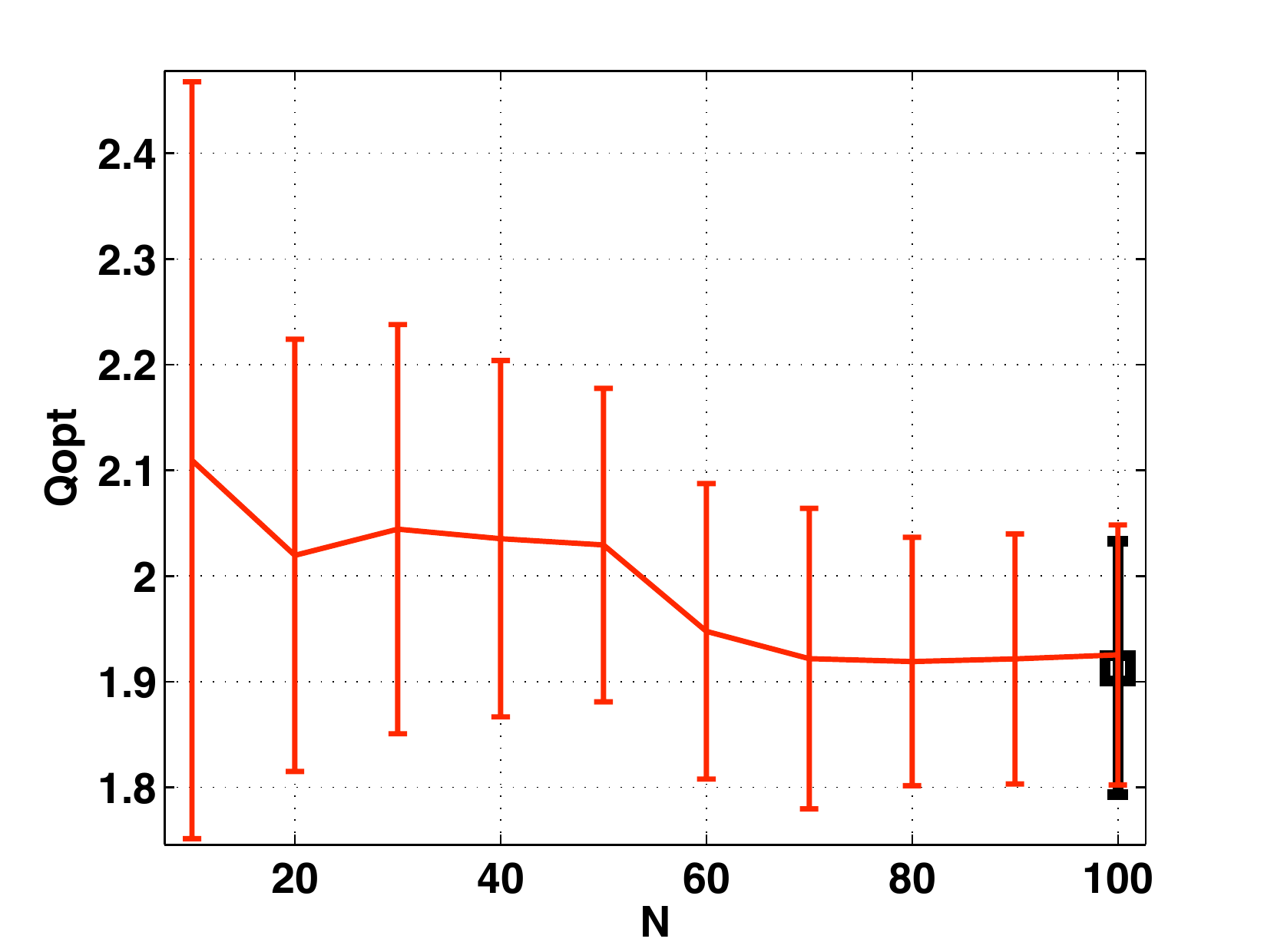}
    \includegraphics*[width=0.32\textwidth]{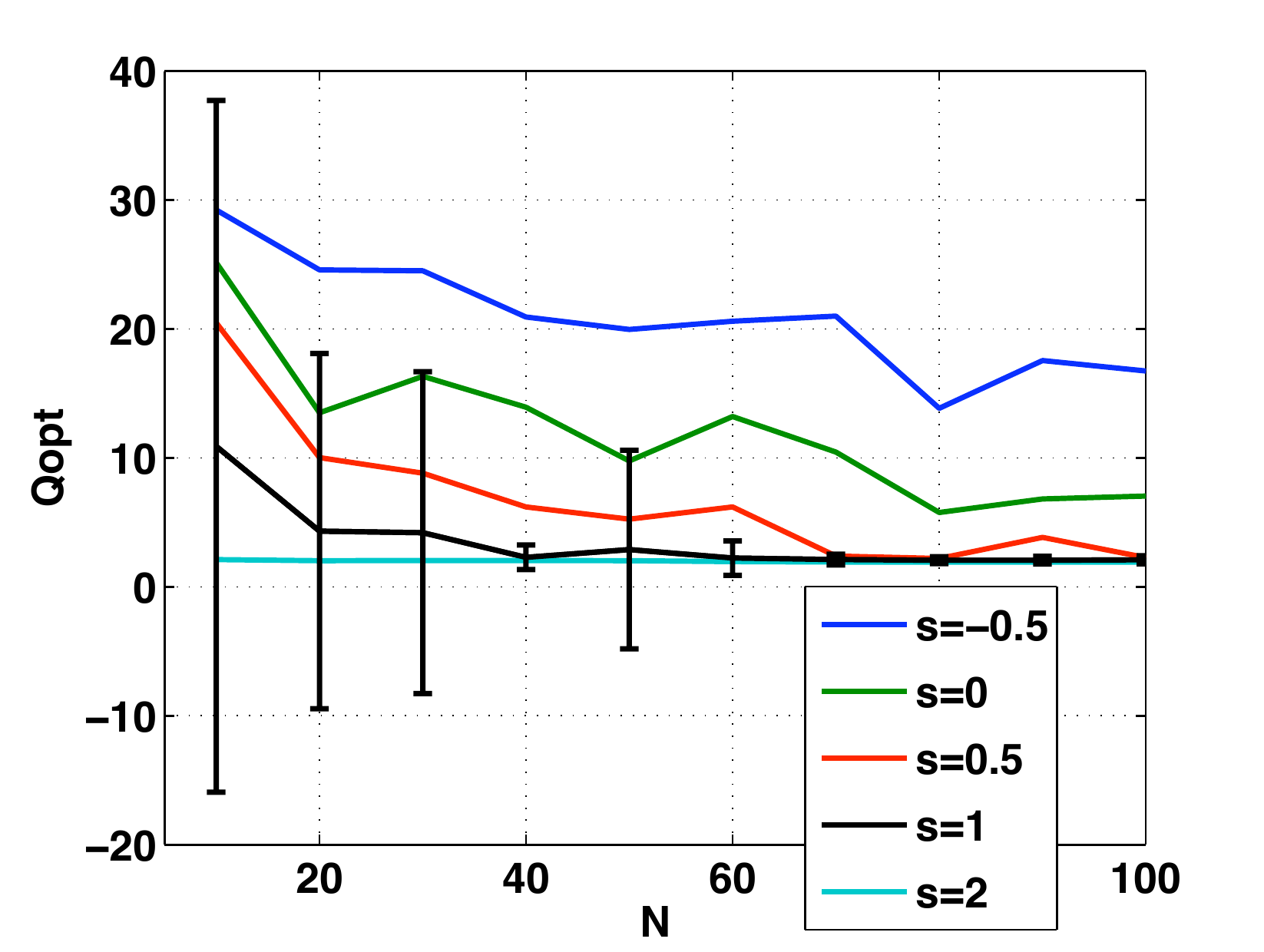}
    \caption{In (a) $L^2$ residual as a function 
      of $q$ where a $q$-sine basis is used to approximate 
      two different randomly generated $g\in H^2_\per[0,1]$ satisfying
      $g(0)=g(1)=0$. Note that the
      optimal $\qopt$ occurs at different values.
      In (b) and (c) we  examine $\qopt$ as $N$ increases. We include the mean
      and standard deviation over
      $200$ realizations of random functions subject to the same constraint. 
      For $N=100$ we also show the results from $1000$ realizations.}
    \label{fig:resrand}
  \end{center}
\end{figure}

In \figref{fig:resrand}-(a) we have computed the residual
$\|g-g_{10}\|$ (red) and $\|g-g_{20}\|$ (blue) for randomly generated
$g\in H^2_\per[0,1]$ constrained to $g(0)=g(1)=0$. To construct a random
function $g$ such that the $H^s_\per[0,1]$ norm is finite, 
we find $\widehat{g}(j) = a_j \beta_j$ where $a_j =
(1+j^2)^{-s/2}|j|^{-\frac12-\delta}$ for some small
$\delta>0$ and $\beta_j$ are independent identically distributed 
$N(0,1)$. 
Two realizations of functions $g$ obtained in this way are shown in
the inserts in \figref{fig:resrand}.  Note 
that $\qopt$ is achieved at different places but close to $2$.  In
\figref{fig:resrand}-(b) we examine $\qopt$ over $200$ realizations of
functions in $H^2_\per[0,1]$. For any fixed realization and value of $N$,
$\qopt\neq 2$ although in the limit this optimal parameter appears to
be close to 2.  For $N=100$ we also show the results of $1000$
realizations. In this case, the mean value is closer to $2$ although
the variation is still large.  In \figref{fig:resrand}-(c) we show the mean values where
functions are taken in $H^s_\per[0,1]$ for $s=-0.5,\, 0,\,0.5,\, 1$ and $2$
with $200$ realizations. For $s<2$ the variability in $\qopt$ is far
larger. For fixed $N$, $\qopt>2$.

The observed outcome of this experiment
strongly support the conjecture that $q_{\mathrm{opt}}$ typically
approaches $2$ as the regularity of $g$ is increased.

\section{Numerical solution of the $p$-Poisson equation}
\label{3}
We now address the question of approximating the solutions of
\eqref{ppoisson} by means of a $q$-sine and dual basis.  In view of
lemmas~\ref{dual} and \ref{basis}, we begin by determining uniform
estimates on how sensitive this solution is under perturbations of the
right hand side.  Analogous questions have certainly been considered
in more general frameworks, however here we focus on the explicit
calculation of the constants involved. 

A key ingredient in the estimates presented below 
is the fact that \eqref{ppoisson} can be integrated explicitly. Let the
Volterra operator
\[
     Vg(x)=\int_0^x g(t) \ud t.
\]
Note that $Vg\in H^1(0,1)$ for all $g\in L^1(0,1)$. Furthermore
$V:L^s(0,1)\longrightarrow L^t(0,1)$ is a contraction operator
for all $1\leq s,t \leq \infty$ and its norm can be explicitly determined
\cite[Theorem~1.1]{bensai}. Let
\[
     h_g(\gamma)=\int_0^1 \lb Vg(\tau) -\gamma \rb ^{\frac{1}{p-1}} \ud \tau.
\]
Then $h_g(\gamma)$ is a continuous function, decreasing in $\gamma$, for all
fixed $g\in L^1(0,1)$. Let 
\[
     \min_{x\in[0,1]}Vg(x)\leq \gamma_0(g) \leq \max_{x\in[0,1]}Vg(x)
\]
be the unique root such that $h_g(\gamma_0(g))=0$. Then
\begin{equation} \label{solu_ppoisson}
     u(x)=\int_0^x\lb Vg(\tau)-\gamma_0(g) \rb ^{\frac{1}{p-1}} \ud \tau
      =V\left(\lb Vg(\tau)-\gamma_0(g) \rb ^{\frac{1}{p-1}}\right)(x)
\end{equation}
is the unique solution of \eqref{ppoisson}.

We firstly establish concrete H\"older estimates on $h_g(\gamma)$. 
Without further mention in this section we will fix $r=\frac{1}{p-1}$ 
and denote by $\|\!\cdot\!\|_s$ the norm of $L^s(0,1)$. In the case $s=2$ 
we will continue suppressing the sub-index.

\begin{lemma} \label{cases} 
Let $g\in L^1(0,1)$ and $-\|g\|_1\leq \gamma 
\leq \mu \leq \|g\|_1$. Then
\begin{align*}
   (\mu-\gamma)& \leq
    \frac{2^{1-r}\|g\|_1^{1-r}}{r}
   [h_g(\gamma)-h_g(\mu)] & 0<r\leq 1 \\
 (\mu-\gamma)^r& \leq 2^{r-1}[h_g(\gamma)-h_g(\mu)] & r>1. 
\end{align*}
\end{lemma}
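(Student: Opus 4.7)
The plan is to express $h_g(\gamma)-h_g(\mu)$ in a form where the dependence on $\mu-\gamma$ is explicit, and then treat the two cases separately.

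For the case $0<r\leq 1$, I would use the fact that $\frac{d}{d\gamma}\lb Vg(\tau)-\gamma\rb^r = -r|Vg(\tau)-\gamma|^{r-1}$ for $Vg(\tau)\neq \gamma$. Since the integrand is locally integrable in $\gamma$, the fundamental theorem of calculus combined with Fubini gives
\[
h_g(\gamma)-h_g(\mu) = r\int_\gamma^\mu\!\int_0^1 |Vg(\tau)-s|^{r-1}\ud\tau\,\ud s.
\]
Now $|Vg(\tau)|\leq\|g\|_1$ and by hypothesis $|s|\leq\|g\|_1$, so $|Vg(\tau)-s|\leq 2\|g\|_1$; as $r-1\leq 0$, the function $x\mapsto x^{r-1}$ is decreasing, forcing $|Vg(\tau)-s|^{r-1}\geq (2\|g\|_1)^{r-1}$. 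Substituting and rearranging yields the first inequality.

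For the case $r>1$, I would establish the following pointwise inequality: for all real $a\geq b$,
\[
\lb a\rb^r - \lb b\rb^r \geq 2^{1-r}(a-b)^r.
\]
Applied to $a=Vg(\tau)-\gamma$ and $b=Vg(\tau)-\mu$ (so $a-b=\mu-\gamma\geq 0$), integrating over $\tau\in[0,1]$ immediately produces the second inequality. To prove the pointwise estimate, I would write $a=m+s$, $b=m-s$ with $s=(a-b)/2\geq 0$, and split into the two geometric subcases:
\begin{itemize}
\item If $a$ and $b$ have the same sign (so $|m|\geq s$), then $\lb a\rb^r-\lb b\rb^r = \pm[(|m|+s)^r-(|m|-s)^r]$ with the sign chosen so this is nonnegative; setting $F(u)=(u+s)^r-(u-s)^r-2s^r$ for $u\geq s$, one checks $F(s)=2s^r(2^{r-1}-1)\geq 0$ and $F'(u)\geq 0$.
\item If $a$ and $b$ have opposite signs (so $s>|m|$), then $\lb a\rb^r-\lb b\rb^r = (s+m)^r+(s-m)^r$, and the convexity of $u\mapsto u^r$ for $r\geq 1$ gives $\tfrac{1}{2}[(s+m)^r+(s-m)^r]\geq s^r$.
\end{itemize}
Either way the lower bound is $2s^r = 2^{1-r}(a-b)^r$.

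The only genuinely delicate step is the pointwise inequality in the case $r>1$, because the straightforward mean value estimate degenerates when $a$ and $b$ straddle zero; this is precisely where convexity of $u^r$ replaces the differential argument, and where the sharp constant $2^{1-r}$ appears (attained at $a=-b=s$). All other steps are routine computations involving $|Vg|\leq\|g\|_1$ and Fubini.
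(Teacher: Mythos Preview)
Your proof is correct and follows essentially the same approach as the paper. For $r>1$ you prove and apply the identical pointwise inequality $(z-w)^r\leq 2^{r-1}(\lb z\rb^r-\lb w\rb^r)$ with an equivalent case split; for $0<r\leq 1$ the paper states the secant inequality $\lb z\rb^r-\lb w\rb^r\geq rM^{r-1}(z-w)$ on $[-M,M]$ directly, whereas you obtain the same bound by writing $h_g(\gamma)-h_g(\mu)=r\int_\gamma^\mu\!\int_0^1|Vg(\tau)-s|^{r-1}\ud\tau\,\ud s$ and bounding the integrand---this is the same derivative estimate $r|y|^{r-1}\geq r(2\|g\|_1)^{r-1}$, just integrated explicitly rather than stated as a pointwise inequality.
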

\begin{proof}
Suppose first that $0<r\leq 1$. From the graph of $\lb z \rb^r$
for $|z|\leq M$ it is readily seen that
$
       \lb z \rb^r - \lb w \rb ^r \geq r M^{r-1}(z-w)
$
for all $-M\leq w \leq z \leq M$. Then
\begin{align*}
   (\mu-\gamma) & = \int_0^1 \left[ (Vg(\tau)-\gamma)-(Vg(\tau)
-\mu)\right]\ud \tau \\ & \leq \frac{M^{1-r}}{r} \left[
h_g(\gamma)- h_g(\mu) \right]
\end{align*}
for $M=2\|g\|_1$, and $\gamma$ and $\mu$ as in the hypothesis. 

In a similar fashion, let $r>1$. Then
$
   (z-w)^r \leq 2^{r-1}(\lb z \rb^r - \lb w \rb ^r)
$
for all $-M\leq w \leq z \leq M$. 
Indeed, if $0\leq w <z$, a straightforward argument shows that
\[
  \frac{\lb z\rb^r-\lb w \rb^r}{z-w}=\frac{z^r-w^r}{z-w}\geq z^{r-1}\geq (z-w)^{r-1};
\]
if  $w<z\leq 0$,
\[
   \frac{\lb z\rb^r-\lb w \rb^r}{z-w}\geq |w|^{r-1}\geq (z-w)^{r-1};
\]
and if $w<0<z$,
\[
    \min_{z>0}\frac{\lb z\rb^r-\lb w \rb^r}{(z-w)^r} =
    \min_{z>0}\frac{z^r-|w|^r}{(z+|w|)^r}=\frac{1}{2^{r-1}}
\]
achieved when $w=-z$. Thus
\begin{align*}
   (\mu-\gamma)^r & = \int_0^1 \left[ (Vg(\tau)-\gamma)-(Vg(\tau)
-\mu)\right]^r\ud \tau \\ & \leq 2^{r-1} \left[
h_g(\gamma)- h_g(\mu) \right].
\end{align*}
\end{proof}

\begin{theorem} \label{stability}
Let $u$ and $\tilde{u}$ be solutions
of \eqref{ppoisson} with corresponding sources $g$ and 
$\tilde{g}$.
Let $m=\max\{\|g\|_1,\|\tilde{g}\|_1\}$. Then
\begin{align*}
 \|u-\tilde{u}\|&\leq 2^{1-r} \left(\|g-\tilde{g}\|+\frac{(4m)^{1-r}}{r} 
\|g-\tilde{g}\|_1^r\right)^{r} & 0<r\leq 1
 \\
\|u-\tilde{u}\|&\leq r2^{r-1}m^{r-1}\left(\|g-\tilde{g}\|+
2^{2-2/r}m^{1-1/r}r^{1/r}\|g-\tilde{g}\|_1^{1/r}\right) & r>1.
\end{align*}
\end{theorem}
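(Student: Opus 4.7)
The plan is to work directly from the explicit formula \eqref{solu_ppoisson}, which gives
\[
   u(x)-\tilde u(x) = V\!\left(\lb Vg-\gamma_0(g)\rb^{r}-\lb V\tilde g-\gamma_0(\tilde g)\rb^{r}\right)\!(x).
\]
Since $V$ is a contraction from $L^{1}$ into $L^{\infty}$ (a special case of \cite[Theorem~1.1]{bensai}) and $\|\cdot\|\le\|\cdot\|_{\infty}$ on $(0,1)$, it suffices to control the $L^{1}$ norm of the argument of $V$. I would then treat the two cases in parallel using the elementary scalar bounds
\[
   |\lb z\rb^{r}-\lb w\rb^{r}|\le 2^{1-r}|z-w|^{r}\quad(0<r\le 1),
   \qquad
   |\lb z\rb^{r}-\lb w\rb^{r}|\le rM^{r-1}|z-w|\quad(r>1,\ |z|,|w|\le M),
\]
which are exactly the pointwise ingredients used in the proof of Lemma~\ref{cases}. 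Because $|\gamma_0(g)|\le\|Vg\|_{\infty}\le\|g\|_{1}\le m$ and similarly for $\tilde g$, both $Vg-\gamma_0(g)$ and $V\tilde g-\gamma_0(\tilde g)$ are bounded in modulus by $2m$, so one may take $M=2m$ throughout.

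The critical ingredient is a quantitative bound on $\Delta\gamma:=|\gamma_0(g)-\gamma_0(\tilde g)|$. Assuming $\gamma:=\gamma_0(g)\le\gamma_0(\tilde g)=:\mu$, the defining identities $h_g(\gamma)=0=h_{\tilde g}(\mu)$ give $h_g(\gamma)-h_g(\mu) = h_{\tilde g}(\mu)-h_g(\mu)$, and the right-hand side is a pure integral of the pointwise difference $\lb V\tilde g-\mu\rb^{r}-\lb Vg-\mu\rb^{r}$. Plugging the scalar estimates into this integral, then inserting the result into Lemma~\ref{cases}, produces after rearrangement
\[
   \Delta\gamma \le \frac{(4m)^{1-r}}{r}\|g-\tilde g\|_{1}^{r}\quad(0<r\le 1),
   \qquad
   \Delta\gamma \le 2^{2-2/r}r^{1/r}m^{1-1/r}\|g-\tilde g\|_{1}^{1/r}\quad(r>1),
\]
where for $0<r\le 1$ one first applies the subadditivity $(a+b)^{r}\le a^{r}+b^{r}$ together with Jensen's inequality $\int|h|^{r}\le(\int|h|)^{r}$, and in both cases the contraction $\|V(g-\tilde g)\|_{1}\le\|g-\tilde g\|_{1}$ is used.

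To close the argument, I apply the scalar estimates once more to the full difference $\lb Vg-\gamma_0(g)\rb^{r}-\lb V\tilde g-\gamma_0(\tilde g)\rb^{r}$ and integrate. For $r>1$ the bound is linear in the arguments, so the triangle inequality and the Cauchy--Schwarz comparison $\|V(g-\tilde g)\|_{1}\le\|g-\tilde g\|$ yield directly
\[
   \|u-\tilde u\| \le r\,2^{r-1}m^{r-1}\!\left(\|g-\tilde g\|+\Delta\gamma\right),
\]
which, combined with the bound on $\Delta\gamma$ above, is the second claim. For $0<r\le 1$ the scalar estimate gives a constant $2^{1-r}$ and raises the difference to the power $r$; another application of Jensen then produces $2^{1-r}(\|V(g-\tilde g)\|_{1}+\Delta\gamma)^{r}\le 2^{1-r}(\|g-\tilde g\|+\Delta\gamma)^{r}$, from which the first claim follows upon substitution of the $\Delta\gamma$ bound.

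The main obstacle is the case $0<r\le 1$: the outer power $r$ in the theorem is not present in the linear case and appears only because $\lb\cdot\rb^{r}$ is merely Hölder, not Lipschitz. One must apply subadditivity and Jensen's inequality in the right places so that (i) the constants collapse to $2^{1-r}$ and $(4m)^{1-r}/r$ exactly as stated, and (ii) the $L^{1}$ strength of the bound on $\Delta\gamma$ is retained inside the outer $r$th power. The remaining steps are routine bookkeeping with the $L^{1}$--$L^{2}$ inclusion on $(0,1)$ and the contraction properties of $V$.
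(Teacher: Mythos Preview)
Your proposal is correct and follows essentially the same strategy as the paper: the explicit representation \eqref{solu_ppoisson}, the pointwise H\"older/Lipschitz estimates for $\lb\cdot\rb^r$, and Lemma~\ref{cases} to control $|\gamma_0(g)-\gamma_0(\tilde g)|$. The only cosmetic difference is that for $0<r\le1$ the paper routes the estimate through the $L^{s}$ norm with $s=2/r$ (so that raising to the power $r$ converts it into an $L^2$ quantity directly), whereas you pass through $\|\cdot\|_2\le\|\cdot\|_\infty\le\|\cdot\|_1$ and Jensen's inequality; both routes produce the identical constants.
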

\begin{proof}
Let $0<r\leq 1$ and $s=2/r\geq 2$. By virtue of \eqref{solu_ppoisson}
\[
   \|u-\tilde{u}\|\leq\big\|V\left(\lb Vg -\gamma_0(g) \rb^{r}-
    \lb V\tilde{g} -\gamma_0(\tilde{g}) \rb^{r}\right) \big\|_{s}\leq \big\|\lb Vg -\gamma_0(g) \rb^{r}-
    \lb V\tilde{g} -\gamma_0(\tilde{g}) \rb^{r} \big\|_{s}.
\]
Note that $|\lb z \rb ^r - \lb w\rb^r|\leq 2^{1-r} |z-w|^r$ 
for all $0\leq |w|\leq |z|$. Thus
\begin{align*}
 \big\|\lb Vg -\gamma_0(g) \rb^{r}-
    \lb V\tilde{g} -\gamma_0(\tilde{g}) \rb^{r} \big\|_{s}^{1/r} &
\leq 2^{\frac{1-r}{r}}\left( \|V(g-\tilde{g})\| + |\gamma_0(\tilde{g})-
\gamma_0(g)|\right) \\
& \leq 2^{\frac{1-r}{r}}\left( \|g-\tilde{g}\| + |\gamma_0(\tilde{g})-
\gamma_0(g)|\right).
\end{align*}
According to lemma~\ref{cases},
\begin{align*}
    |\gamma_0(\tilde{g})-\gamma_0(g)|
    &\leq \frac{(2m)^{1-r}}{r} |h_g(\gamma_0(\tilde{g}))-h_g(\gamma_0(g))| 
    = \frac{(2m)^{1-r}}{r} |h_g(\gamma_0(\tilde{g}))-h_{\tilde{g}}(\gamma_0(\tilde{g}))| \\
    &\leq \frac{(2m)^{1-r}}{r} \int_0^1 \Big|\lb Vg - \gamma_0(\tilde{g}) \rb^r-
                     \lb V\tilde{g} - \gamma_0(\tilde{g}) \rb^r \Big| 
\ud \tau \\
    & \leq \frac{(4m)^{1-r}}{r} \|g-\tilde{g}\|_r^r 
     \leq \frac{(4m)^{1-r}}{r} \|g-\tilde{g}\|_1^r.    
\end{align*}
This ensures the first statement.

Let $r> 1$. In a similar fashion as before, we see that
\begin{align*}
     \|u-\tilde{u}\|&\leq\big\|\lb Vg -\gamma_0(g) \rb^{r}-
    \lb V\tilde{g} -\gamma_0(\tilde{g}) \rb^{r} \big\| \\ &\leq
   r2^{r-1}m^{r-1} \left( \|V(g-\tilde{g})\|+|\gamma_0(g)-\gamma_0(\tilde{g})
|\right) \\
& \leq  r2^{r-1}m^{r-1} \left( \|g-\tilde{g}\|+|\gamma_0(g)-
\gamma_0(\tilde{g})|\right).
\end{align*}
Lemma~\ref{cases} and similar arguments as for the previous case, yield 
\begin{align*}
    |\gamma_0(\tilde{g})-\gamma_0(g)|^r
    &\leq 2^{r-1}\left|h_{\tilde{g}}(\gamma_0(\tilde{g}))- 
    h_g(\gamma_0(\tilde{g}))\right| \\
  & \leq  2^{2r-2}m^{r-1}r \|g-\tilde{g}\|_1.
\end{align*}
This completes the proof.
\end{proof}

The right hand side bound in the above theorem approaches 2 as
$r\to 0$ independently of the value of $\|g-\tilde{g}\|$. In fact
$\|u-\tilde{u}\|\leq \left\| l_r \right\|_1$ for $l_r=\lb Vg
-\gamma_0(g) \rb^{r}-\lb V\tilde{g} -\gamma_0(\tilde{g}) \rb^{r}$.
Since $l_r(\tau)\to 0$ for almost all $\tau\in[0,1]$ and
$|l_r(\tau)|\leq 2+\|Vg\|_\infty+\|V\tilde{g}\|_\infty$, the
Dominated Convergence Theorem yields $\|u-\tilde{u}\|\to 0$ as $r\to 0$.
This is a well-known property of the $\infty$-Laplacian, see \cite{Rossi_TOW}.
As mentioned in the introduction, 
the approach considered in \cite{ref1_2}  in the
context of finite element approximation of the solutions of \eqref{ppoisson},
may provide an insight on whether the constants found in the above theorem
are optimal.

\begin{figure}[hth]
\begin{center}
  (a) \hspace{0.48\textwidth} (b)  \\
  \includegraphics*[width=0.48\textwidth]{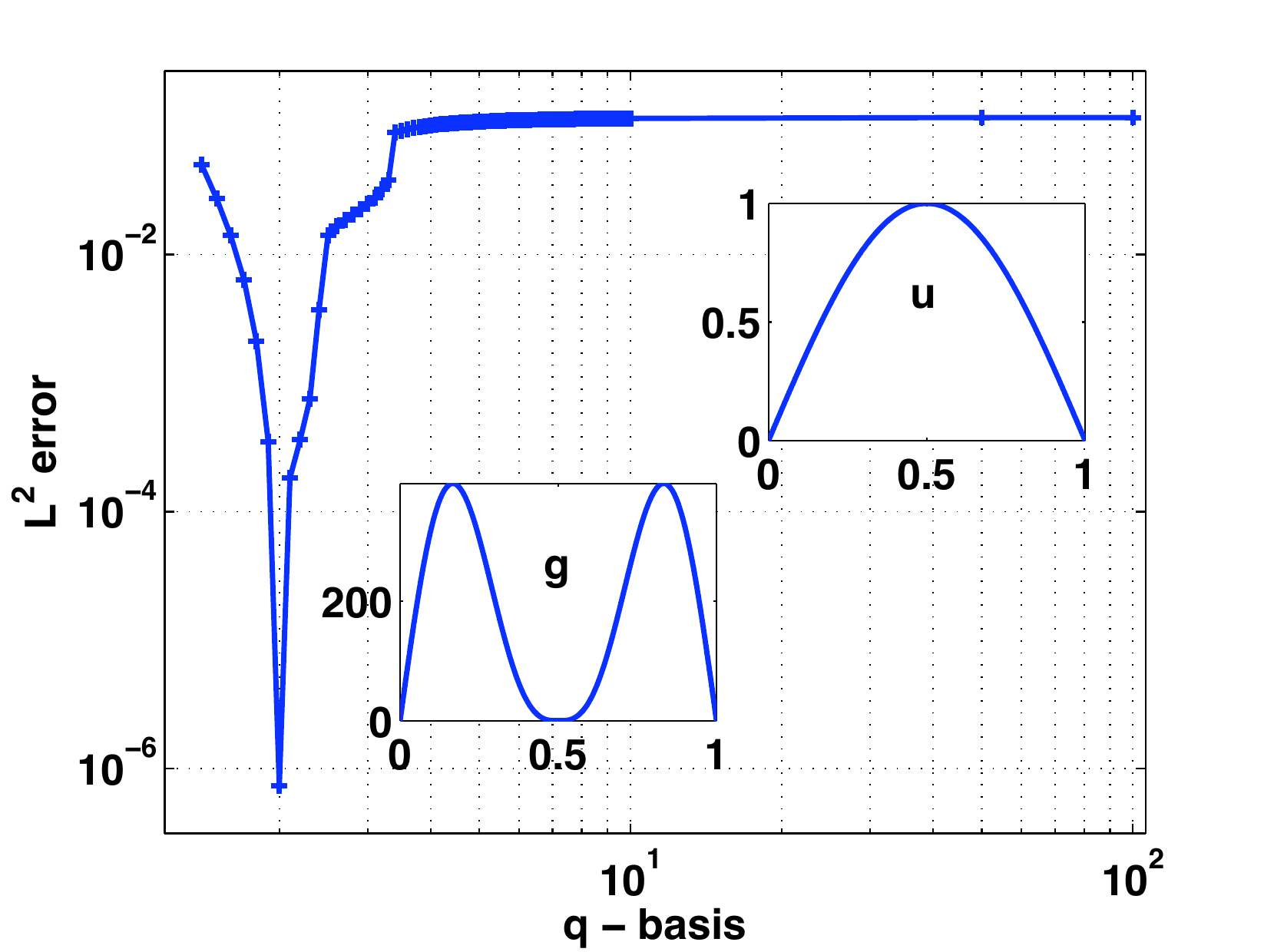}
  \includegraphics*[width=0.48\textwidth]{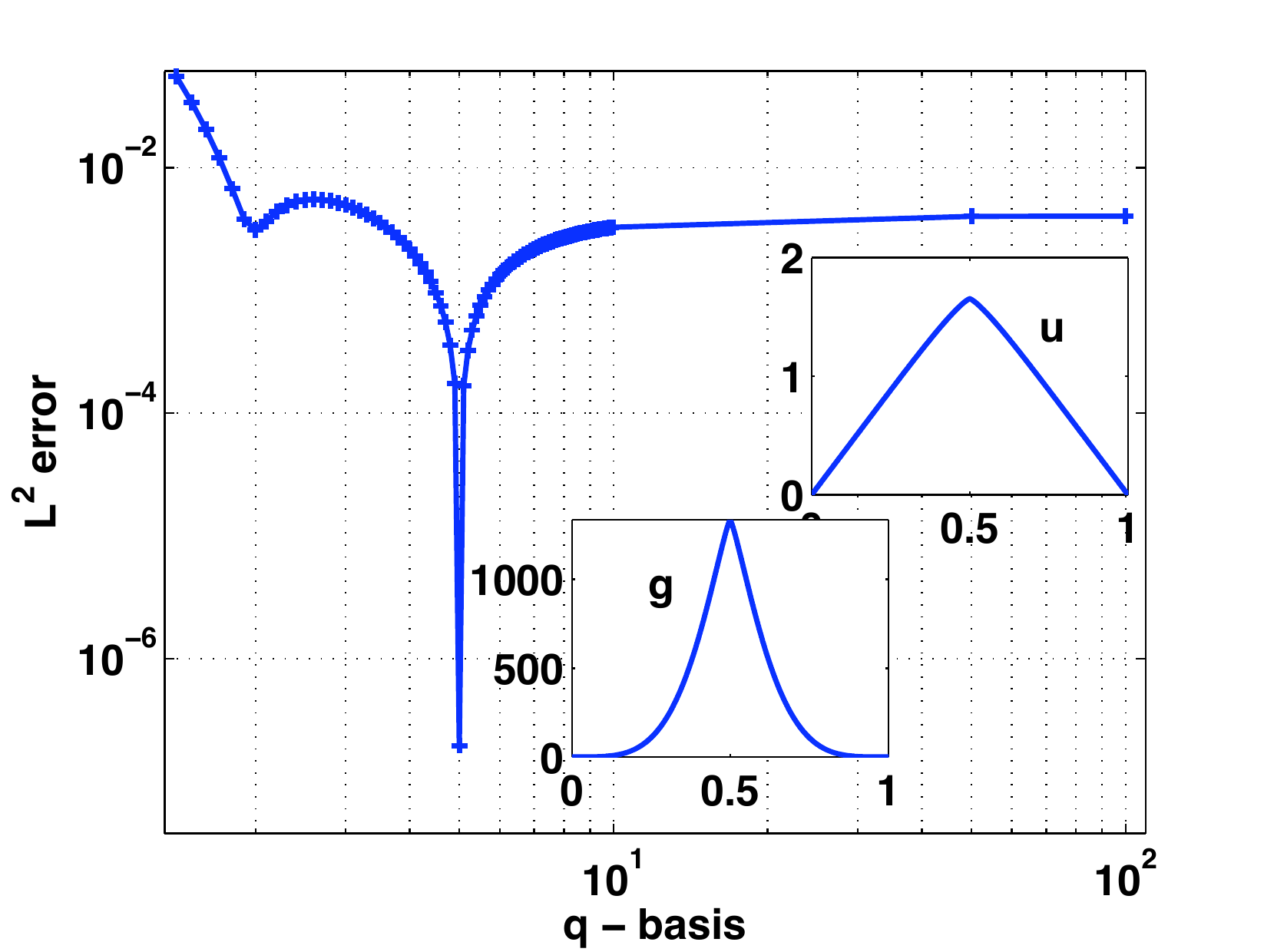}
  \caption{Solving the \pLaplacian problem with $p=5$. (a) The most
    accurate basis for a solution $u(x)=\sin(\pi x)$ is the standard 2-sine
    basis. (b) However for a solution $u(x)=f_1(x)$ with $q=5$ the 5-sine
    basis is the most accurate.  }
  \label{fig:plap1}
\end{center}
\end{figure}

We now describe how the \pPoisson problem is
discretized. The strong formulation \eqref{ppoisson} leads to the following
weak formulation using integration by parts and the boundary conditions:
\be 
\label{eq:weak}
\int_0^1 |u'|^{p-2}u'v' \ud x = \int_0^1 g v \ud x
\ee
for any absolutely continuous test function $v$.
We expand both $u$ and $v$ using a basis $\{\phi_n\}_{n\in\N}$, where $\phi_n$ is
either $f_n$ or $f_n^*$.  
After truncation this leads to the following nonlinear system of equations 
for unknown coefficients $c_j = \langle u,\phi_j^* \rangle$:
\be
\sum_{k=1}^N c_k \int_0^1 \left| \sum_{\ell=1}^N c_\ell \phi_\ell'\right|^{p-2}\phi_k'\phi_j' \ud x = \langle g, \phi_j \rangle \qquad j=1:N.
\label{eq:discrete}
\ee
The case $p=2$ evidently reduces to the standard linear system to solve for the
Poisson equation. 

\begin{figure}[hth]
\begin{center}
  (a) \hspace{0.48\textwidth} (b)  \\
  \includegraphics*[width=0.48\textwidth]{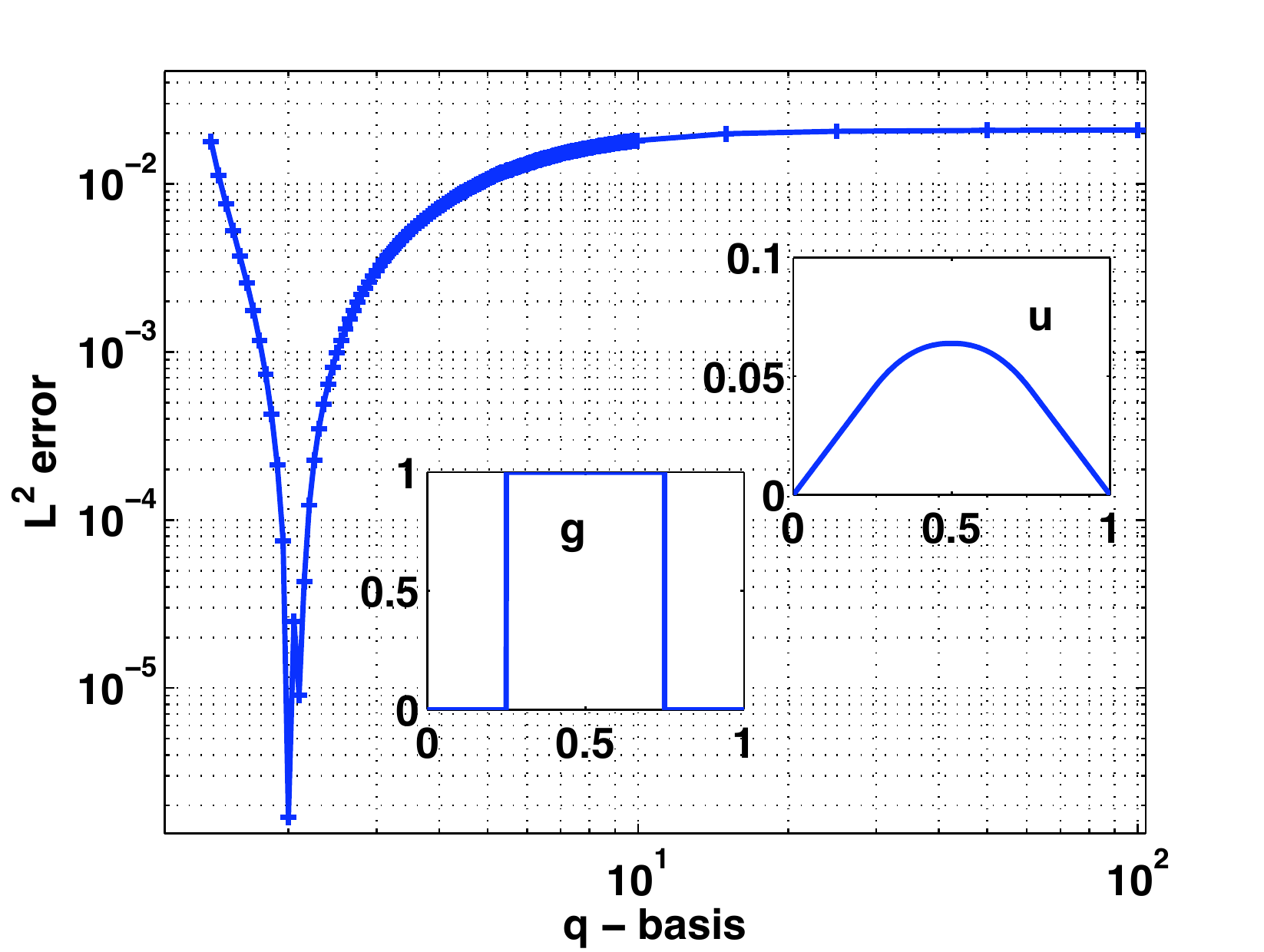}
  \includegraphics*[width=0.48\textwidth]{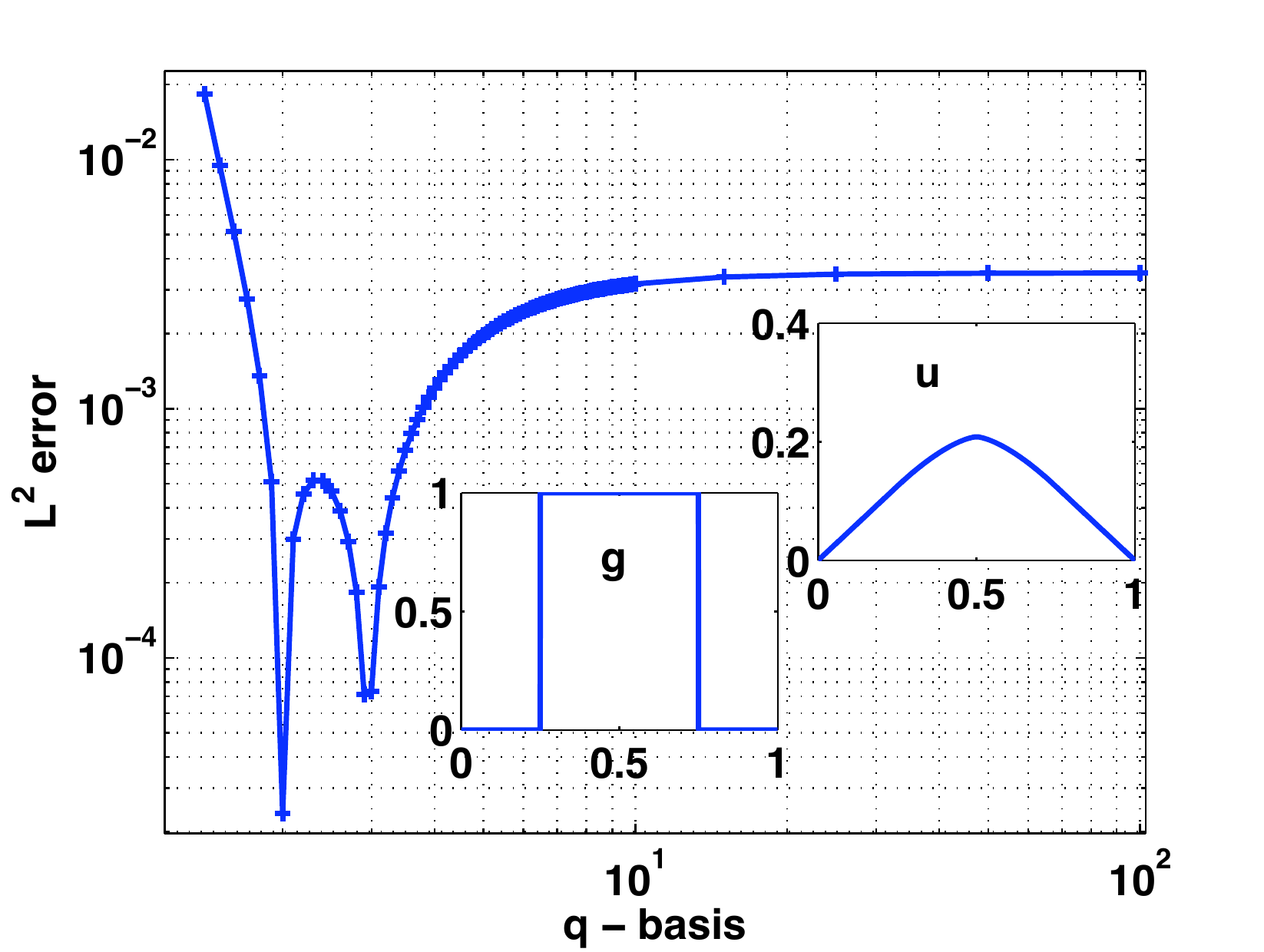}
  (c) \hspace{0.\textwidth} (d) \\
  \includegraphics*[width=0.48\textwidth]{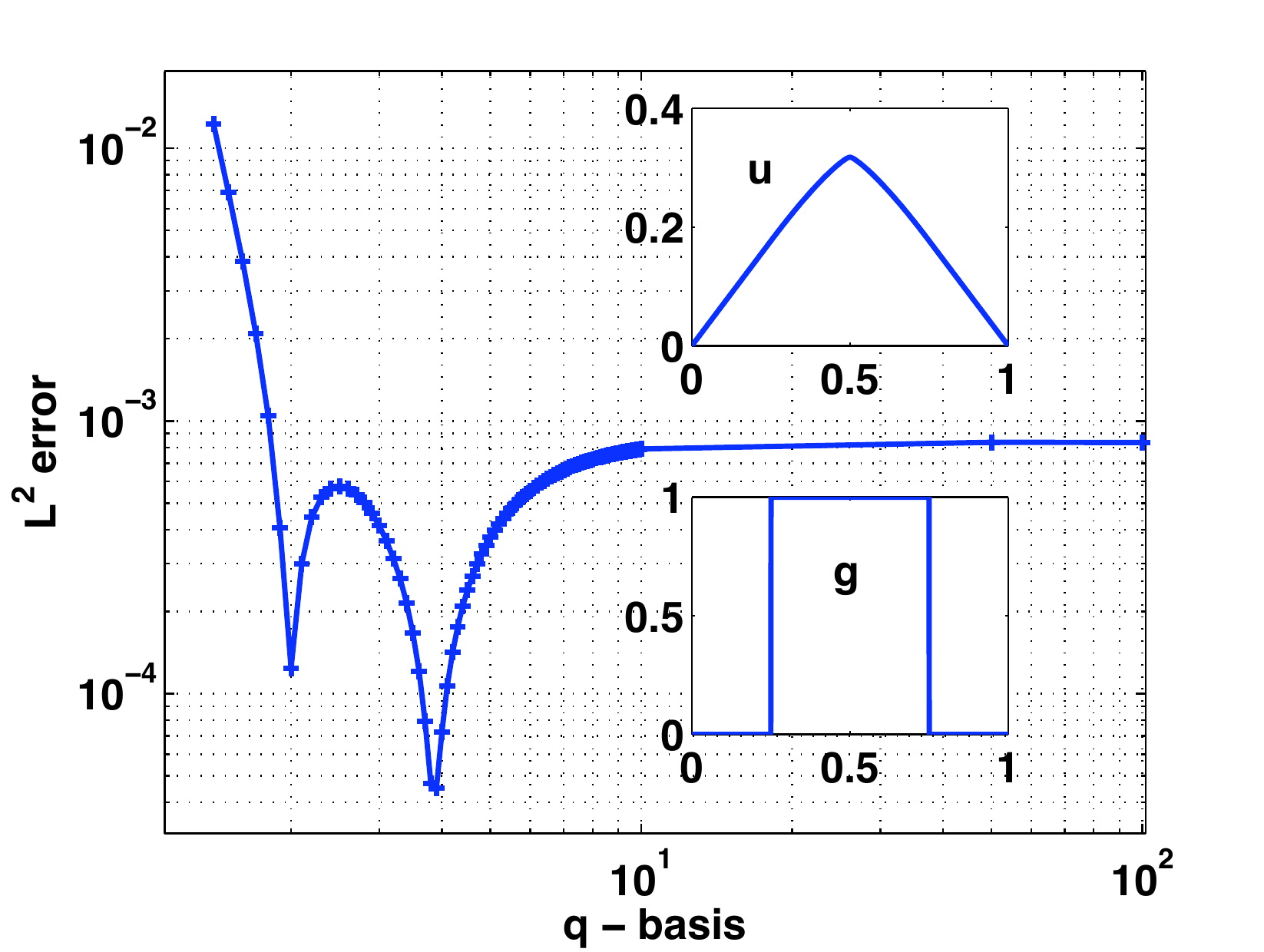}
  \includegraphics*[width=0.48\textwidth]{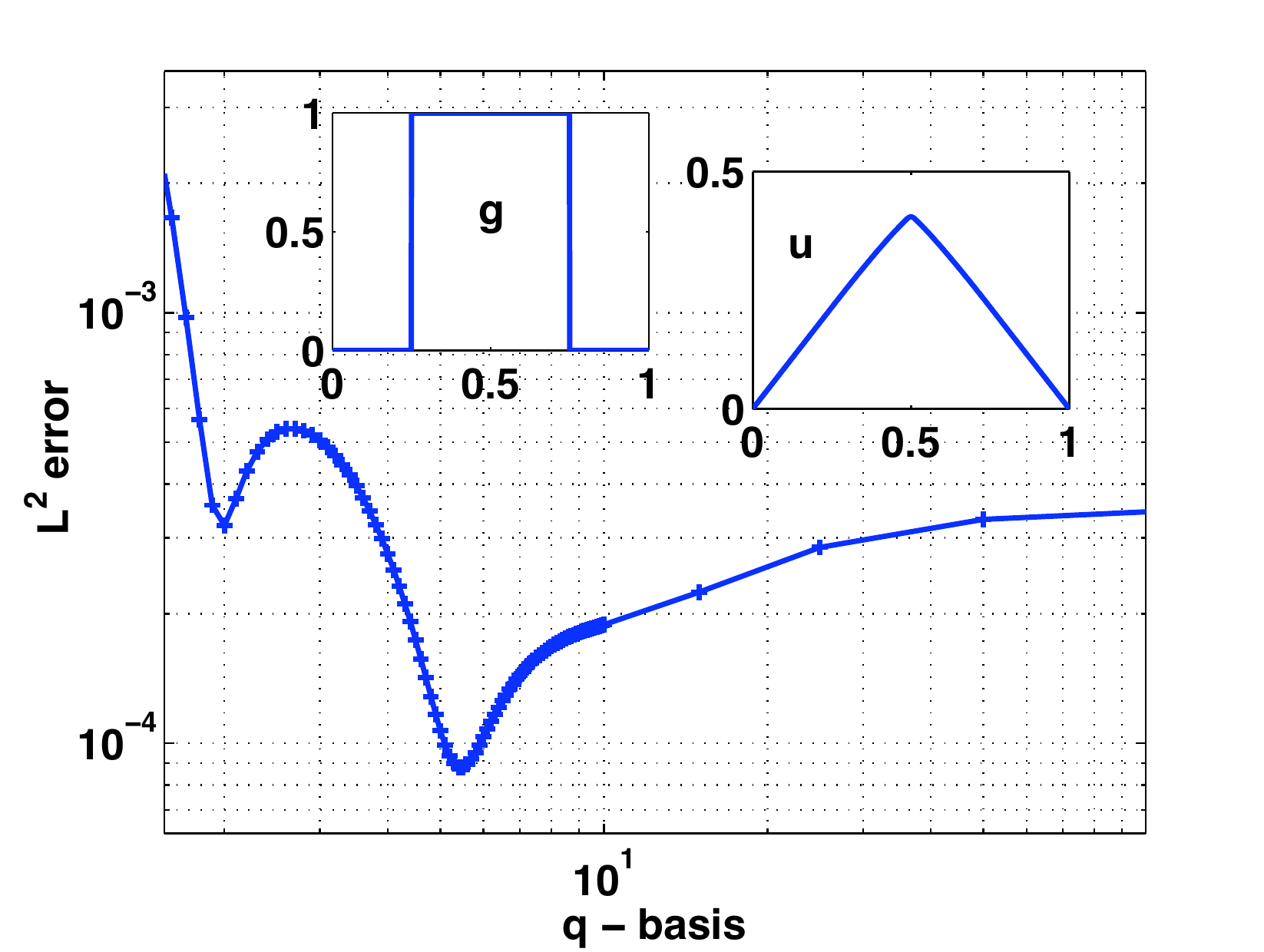}
  \caption{We now examine a piecewise constant $f$ for different $p$ ($N=40$). 
    In (a) $p=1.8$ and $\qopt=2$ (b) $p=3$ and  $\qopt \approx 2$ with 
    another minimum at $q=2.95$,
    in (c) $p=5$ and $\qopt \approx 3.9$ and in (d) $p=10$ and
    $\qopt\approx 5.75$.}
  \label{fig:plap2}
\end{center}
\end{figure}

Numerically the right hand side of \eqref{eq:discrete} is approximated
via quadrature rules. The nonlinear system may then be solved for
example by Newton's method. Since the Jacobian is a full matrix in
this case, a banded approximation appears to provide sufficient
accuracy. However, the results presented below were found using the
trust-region dogleg method with a full Jacobian matrix as implemented
in Matlab.  

\begin{figure}[hth]
\begin{center}
  (a) \hspace{0.48\textwidth} (b)  \\
  \includegraphics*[width=0.4\textwidth]{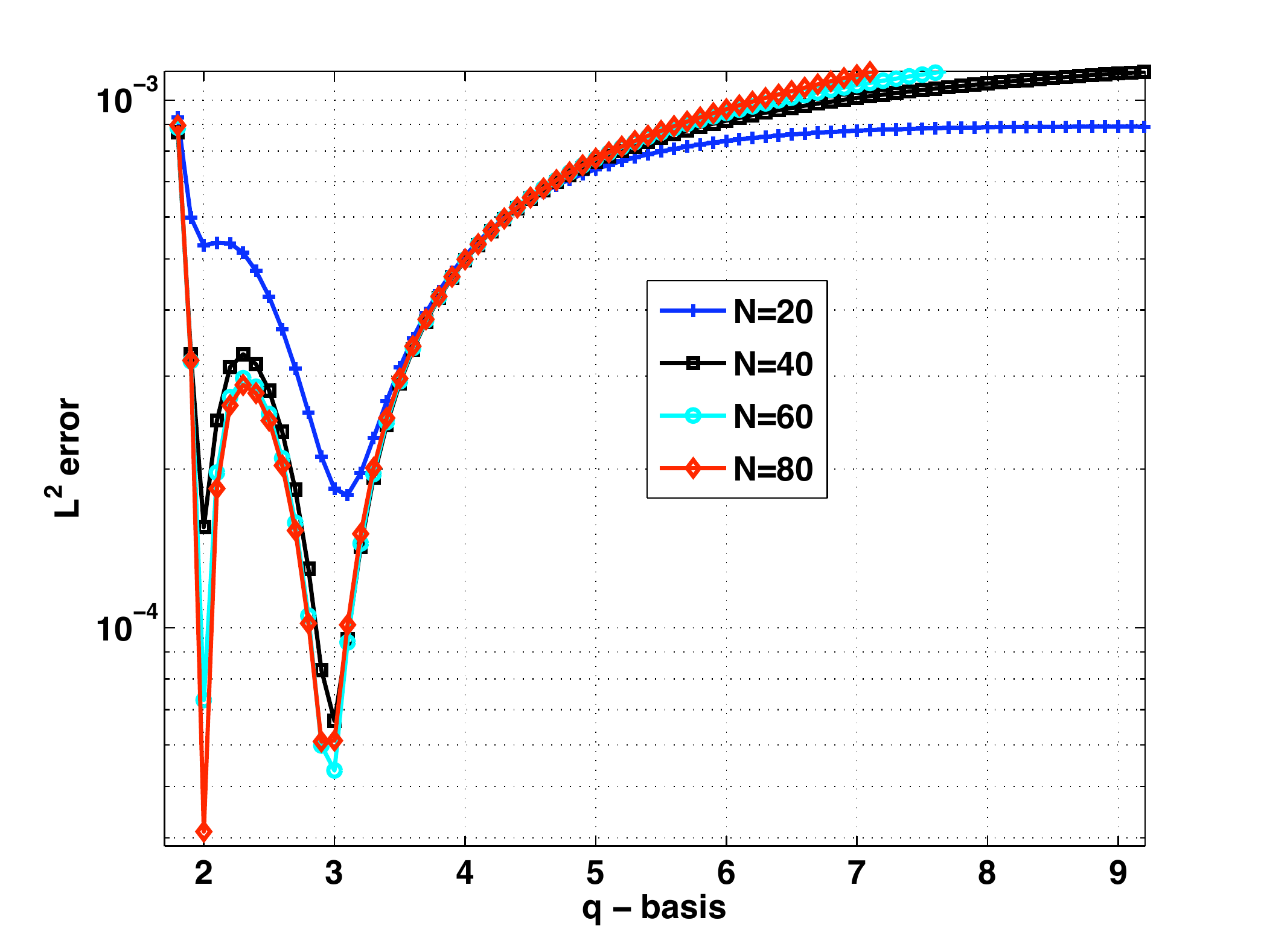}
  \includegraphics*[width=0.4\textwidth]{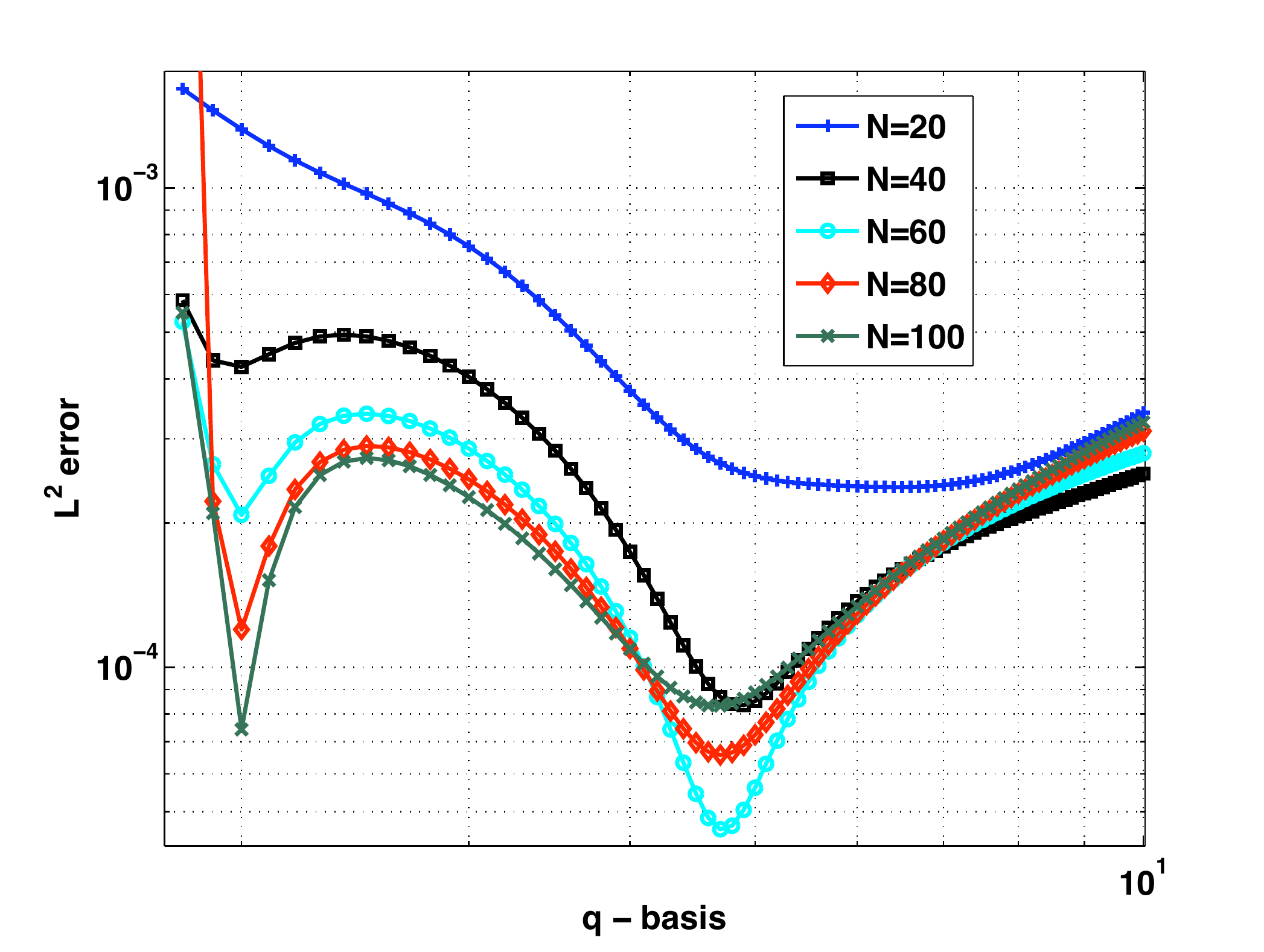}\\
  (c) \hspace{0.48\textwidth} (d)  \\
  \includegraphics*[width=0.4\textwidth]{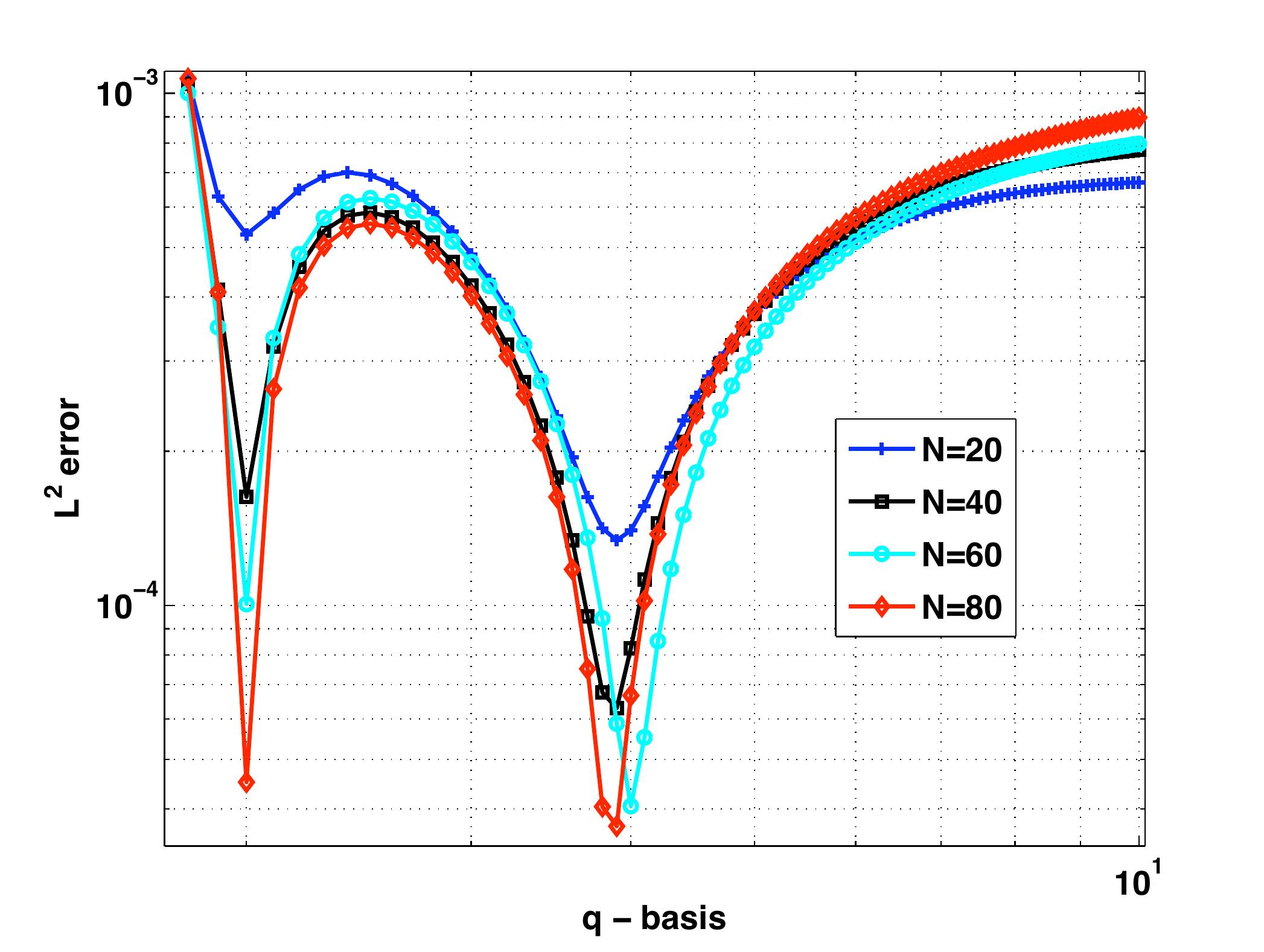}
  \includegraphics*[width=0.4\textwidth]{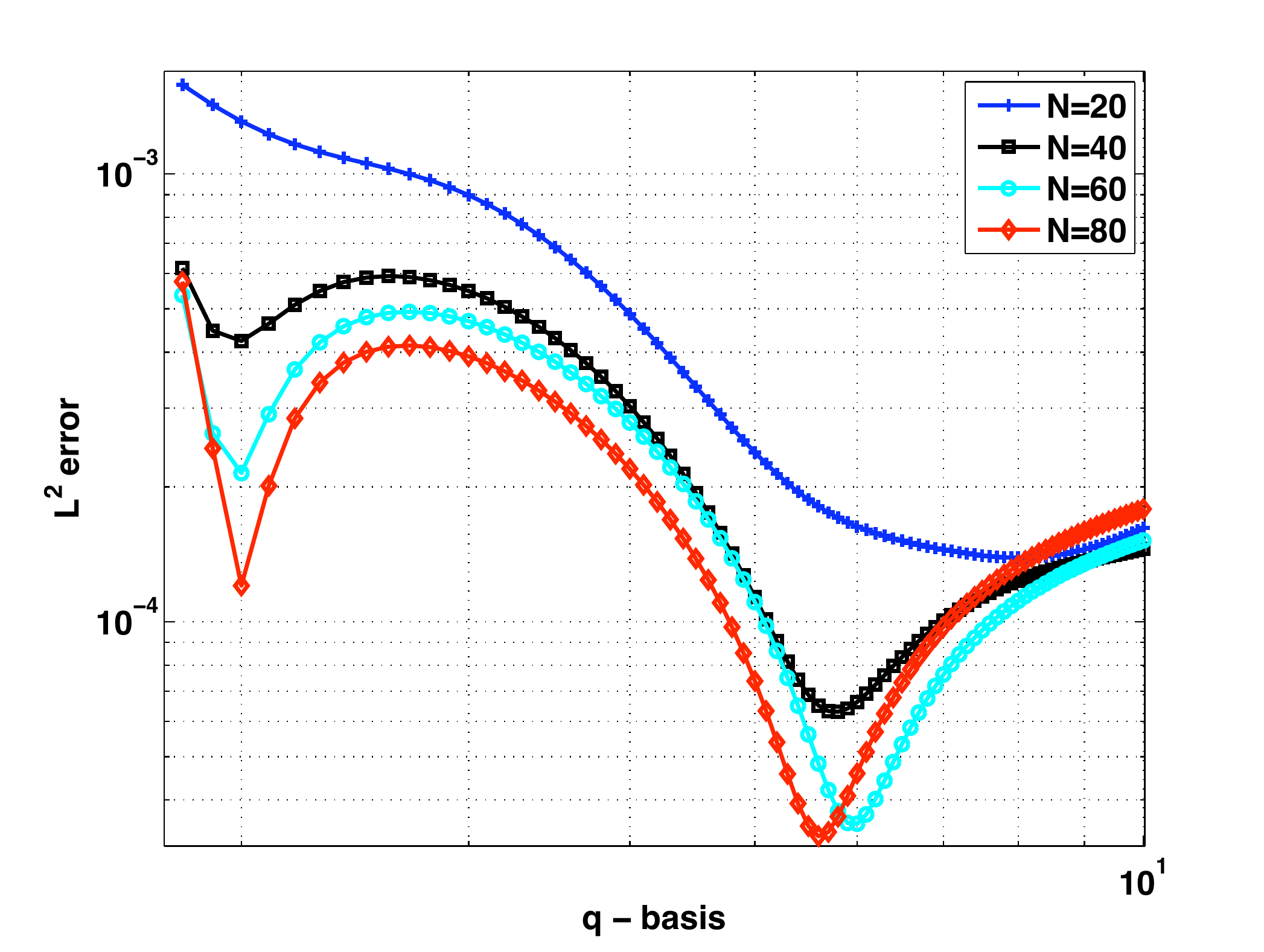}\\
  \caption{Comparison of $N=20,40,60,80$ in (a) for $p=5$ and $g\equiv 1$ 
    (b) for $p=10$ and $g\equiv 1$ (also including $N=100$). In (c) we show for
    $g\equiv g_{\mathrm{b}}$ and $p=5$, and in (d) for  $g\equiv g_{\mathrm{b}}$ and $p=10$.
    We note that for $N$ sufficiently large $\qopt=2$, and a more
    accurate solution may be found for smaller $N$ at $\qopt\neq 2$.
  \label{fig:plap3}}
\end{center}
\end{figure}

In \figref{fig:plap1} we have use this scheme to solve the \pLaplacian
problem with $p=5$ and examine the $L^2$ error taking $N=40$. In \figref{fig:plap1}-(a)
we have fixed a solution $u(x)=\sin(\pi x)$ where we clearly expect
and observe that $q=2$ is the optimal basis. In \figref{fig:plap1}-(b) we have fixed
$u(x)=f_1(x)$ for $p=5$ and so we observe the $q=5$ as the optimal
basis.

\begin{table}[hhh]
\begin{tabular}{l||c|c|c|c}
g              & $p=1.8$& $p=3$ & $p=5$   & $p=10$ \\\hline
$g_{\mathrm{b}}$ $(N=20)$   & 2.05 & 2.95       & 3.9       &  8.0\\
$g_{\mathrm{b}}$ $(N=40)$   & 2.05 & 2.0 (2.95) & 3.9       &  5.75\\
$g_{\mathrm{b}}$ $(N=60)$   & 2.2  & 3.0        & 4.0       &  5.95\\
$g_{\mathrm{b}}$ $(N=80)$   & 2.05 & 2.0        & 3.9       &  5.60\\
\hline
$g=1$ $(N=20)$   & 2.0  & 2.3 & 3.1       &  6.43\\
$g=1$ $(N=40)$   & 2.0  & 2.0 & 3.0       &  4.85 \\
$g=1$ $(N=60)$   & 2.0  & 2.0 & 3.0       &  4.7 \\
$g=1$ $(N=80)$   & 2.0  & 2.0 & 2.0 (2.95)&  4.7 \\ 
\end{tabular}
\caption{\label{tab:qopt} Optimal $\qopt$ for four different \pLaplacian
  problems with either a $g= g_{\mathrm{b}}$ from \eqref{functions} or $g=
  1$. We give estimates for different values of $N$. The change in
  $\qopt$ as $N$ increases occurs as the two minima
  interchange, see figures~\ref{fig:plap2} and \ref{fig:plap3}.}
\vspace*{0.2in}
\end{table}

In \figref{fig:plap2} we take $g=g_{\mathrm{b}}$ from
\eqref{functions} which turns out to be a typical form of forcing for
sandpile problems. We solve the \pLaplacian problem for (a) $p=1.8$,
(b) $p=3$, (c) $p=5$ and $p=10$ with $N=40$ and examine the $L^2$
error in the solution as we vary the $q$ basis.  To estimate this
error we take as exact the solution with 2-sines and $2N$ modes.  We
observe that the optimal basis for representing the solutions is no
longer the standard $q=2$ for $p=3,5,10$.  For problems with moderate
$p$ (for example $p=3$) we see two distinct minima. For larger $p$
problems however, the $q=2$ basis becomes less competitive.

In \tabref{tab:qopt} we give estimates of $\qopt$ for
$g=g_{\mathrm{b}}$ and $g= 1$ for $N=20,40,60$ and $N=80$ modes.  The
changes in $\qopt$ with $N$ are explained by the interchange of the
two minima in \figref{fig:plap2}.  For $N=100$ this interchange occurs
for $p=5$ and $p=10$ as illustrated in \figref{fig:plap3}-(b).

\section{The time dependent \pLaplacian}
\label{5}

In this final section we consider the evolution equation
\eqref{evo_equ} both in the deterministic ($\nu=0$)
and the stochastically forced regime ($\nu \neq 0$).
Slow-fast diffusion is often taken in the large $p$ limit, as a model
for sandpile growth. For further details including arguments about
the validity of the modeling see for example
\cite{MR2240806,MR1419017,Igbida,MR1451539,MR2481827,MR2449104}. Our
purpose here is to consider the time dependent problem for
different values of $p$ and examine the choice of basis in the
formation of the sandpile.

We discretize the weak form of the equation and truncate to solve the
time-dependent version of \eqref{eq:discrete} given by \be
\frac{dc_j}{dt} = \sum_{k=1}^N c_k \int_0^1 \left| \sum_{\ell=1}^N
  c_\ell \phi_\ell'\right|^{p-2}\phi_k'\phi_j' dx - \langle g,\phi_j
\rangle + \nu \langle \frac{dW}{dt} ,\phi_j \rangle \qquad j=1:N.
\label{eq:semi-discrete}
\ee 
This expression is then discretized in time to get a nonlinear
system of equations to solve for $c_j^n$ at each step. Below we
consider an Euler's method which reduces to
\begin{align*}
  \frac{c^{n+1}_j-c^n_j}{\Dt} = \sum_{k=1}^N c_k^{n+1} & \int_0^1
  \left| \sum_{\ell=1}^N c_\ell^{n+1}
    \phi_\ell'\right|^{p-2}\phi_k'\phi_j' dx \\ & - \langle \phi_j^*,g
  \rangle + \nu \langle \phi_j^*,\sum_{m=1}^M
  \alpha_j^{1/2}e_j(x)\Delta\beta_m \rangle
\label{eq:fully-discrete}
\end{align*}
where $\Delta\beta_m$ are independent identically distributed random
variables with mean zero and variance $\Dt$ (recall \eqref{wiener}), and we have assumed the
eigenfunctions of $Q$ are now given by $e_n$. For the case of
stochastic forcing we take $M>>N$.

\begin{figure}[hth]
\begin{center}
  (a) \hspace{0.48\textwidth} (b)  \\
  \includegraphics*[width=0.24\textwidth]{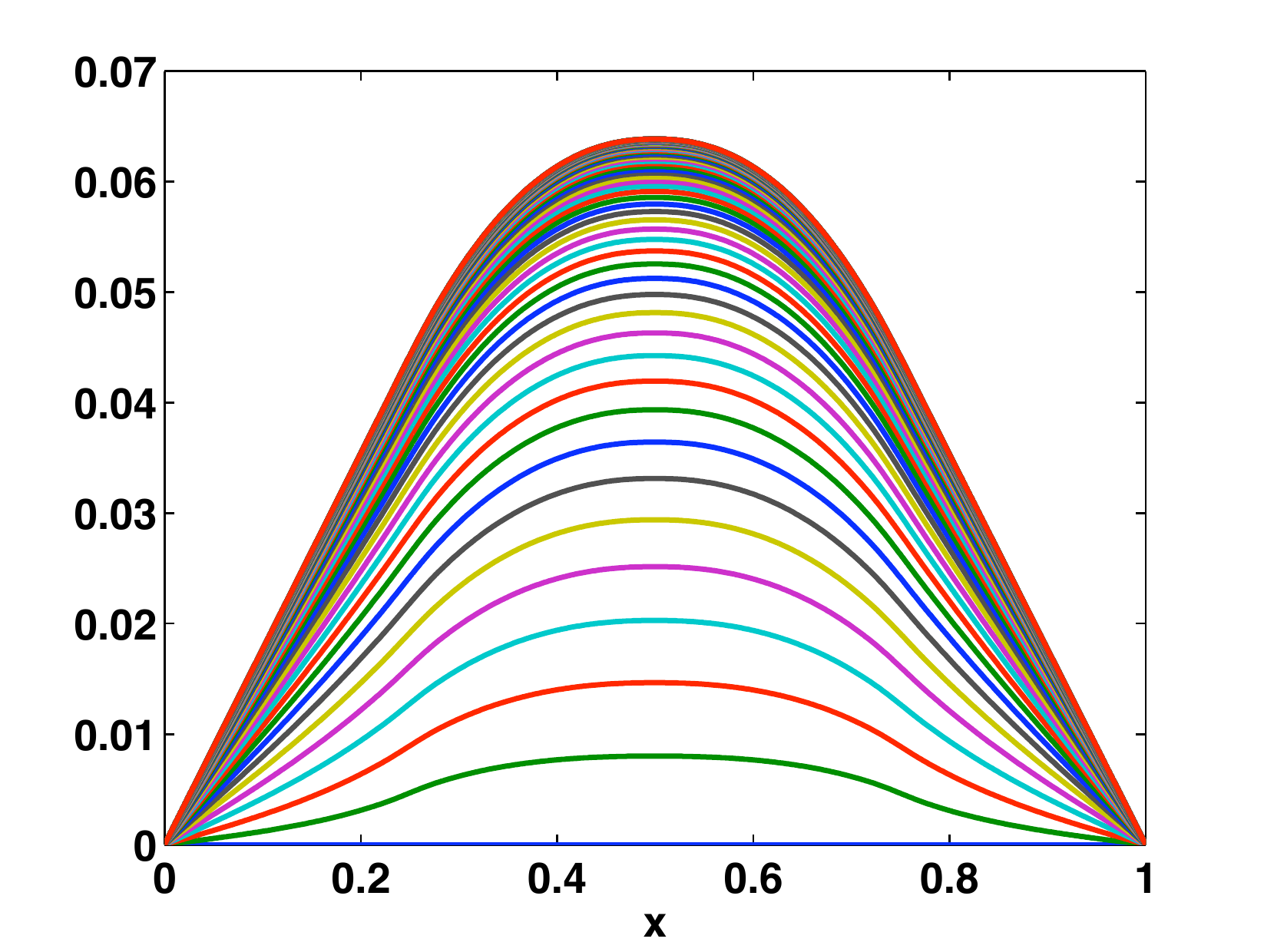}
  \includegraphics*[width=0.24\textwidth]{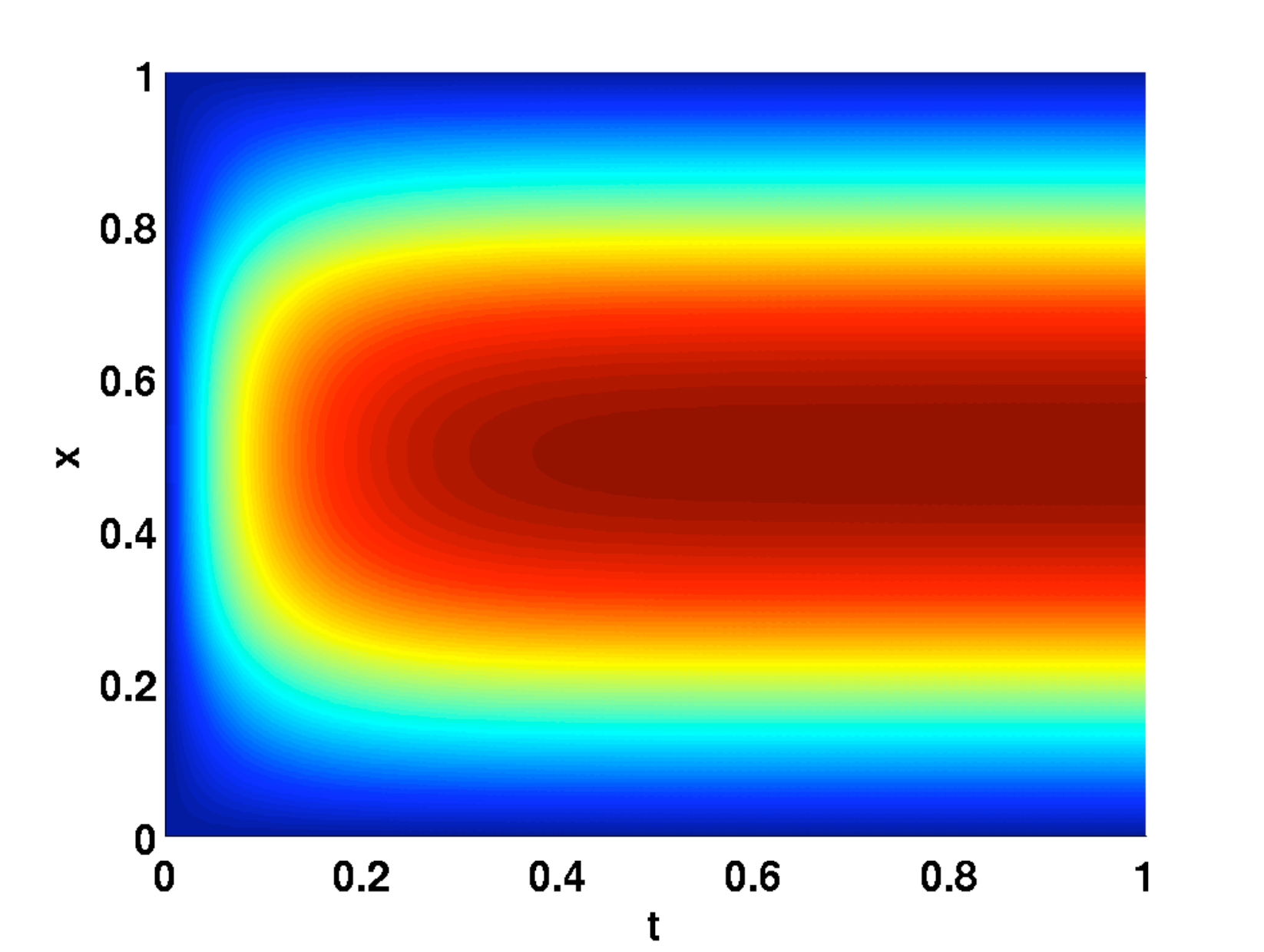}
  \includegraphics*[width=0.24\textwidth]{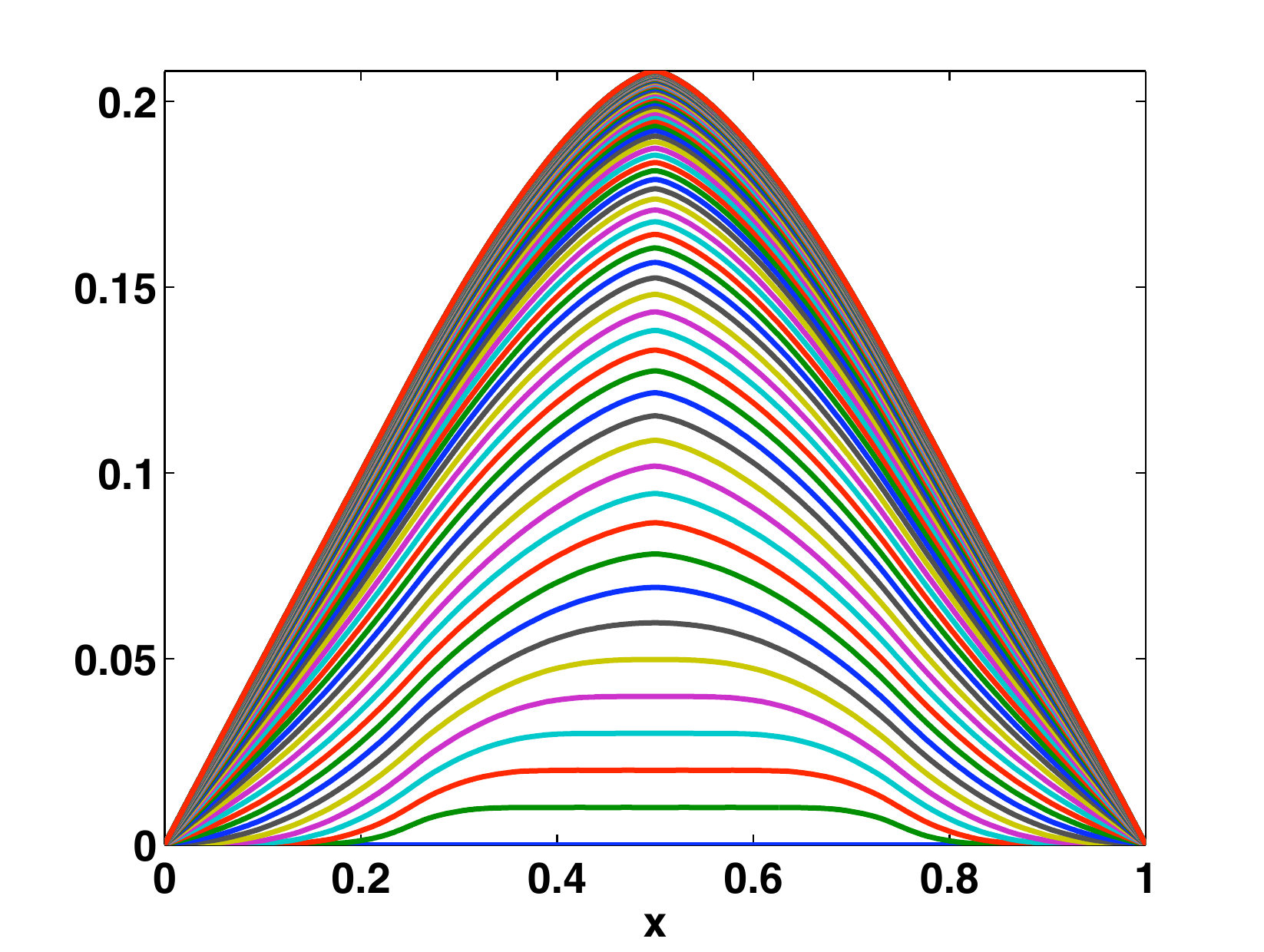}
  \includegraphics*[width=0.24\textwidth]{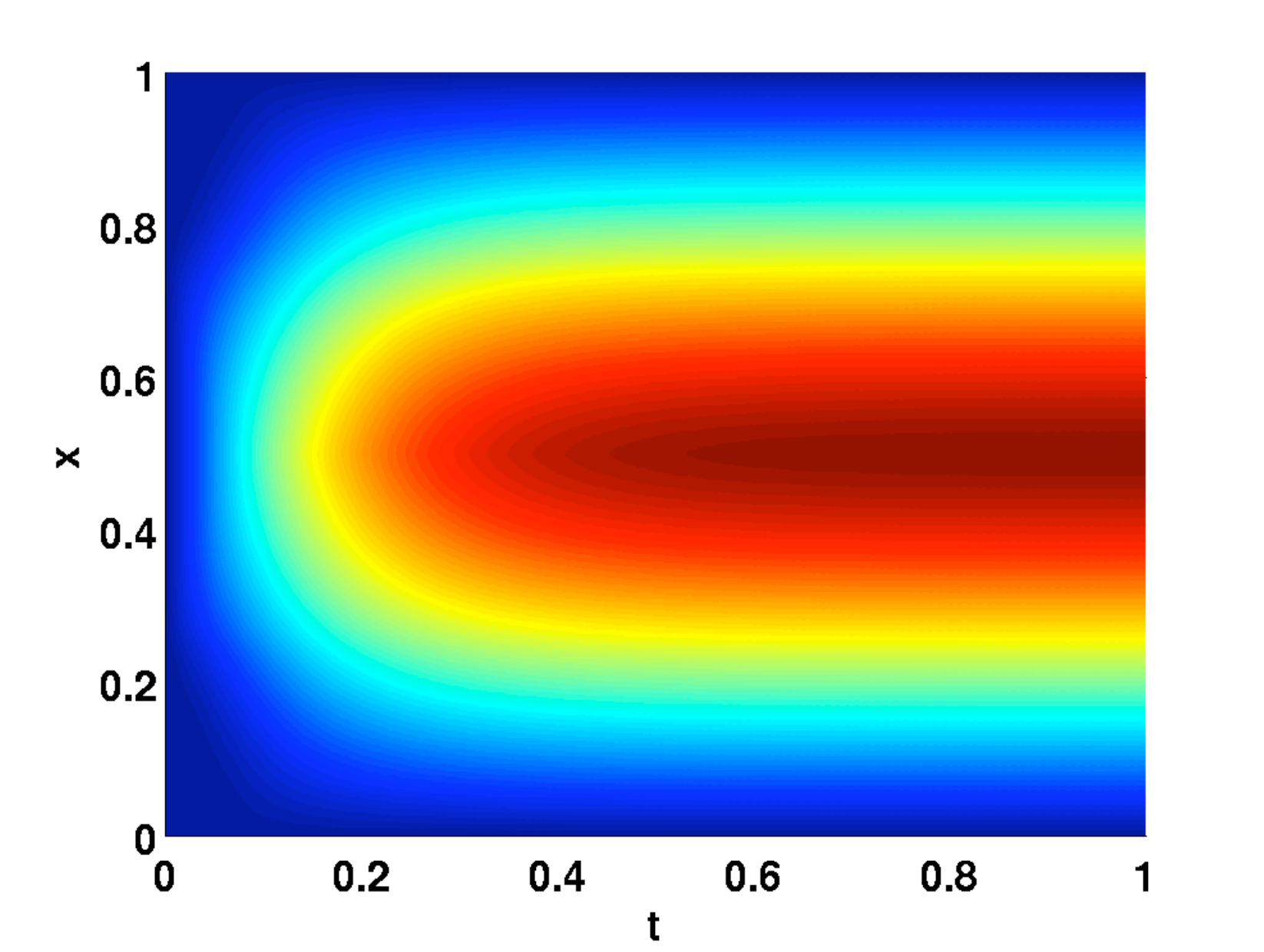}
  (c) \hspace{0.48\textwidth} (d)  \\
  \includegraphics*[width=0.24\textwidth]{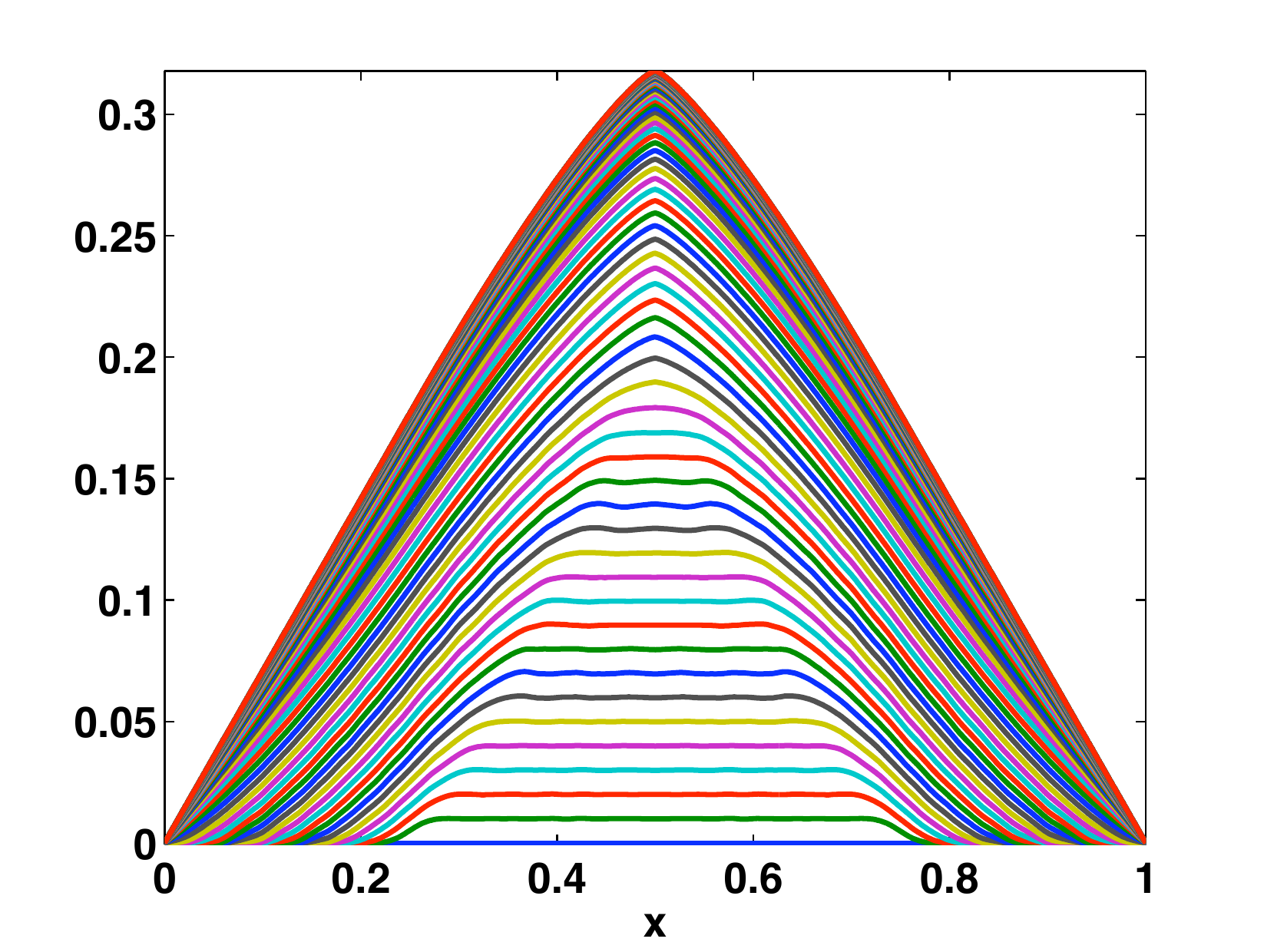}
  \includegraphics*[width=0.24\textwidth]{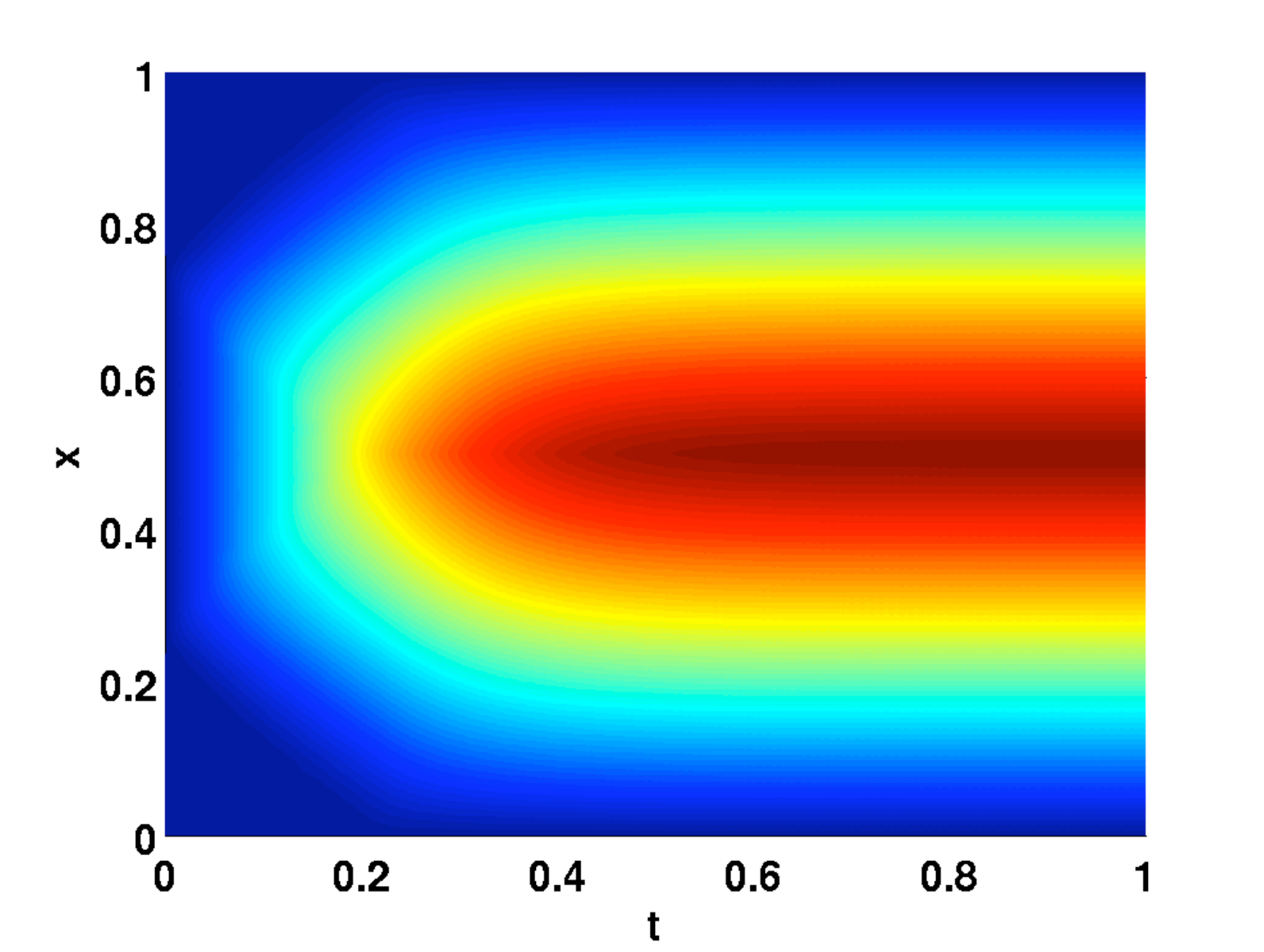}
  \includegraphics*[width=0.24\textwidth]{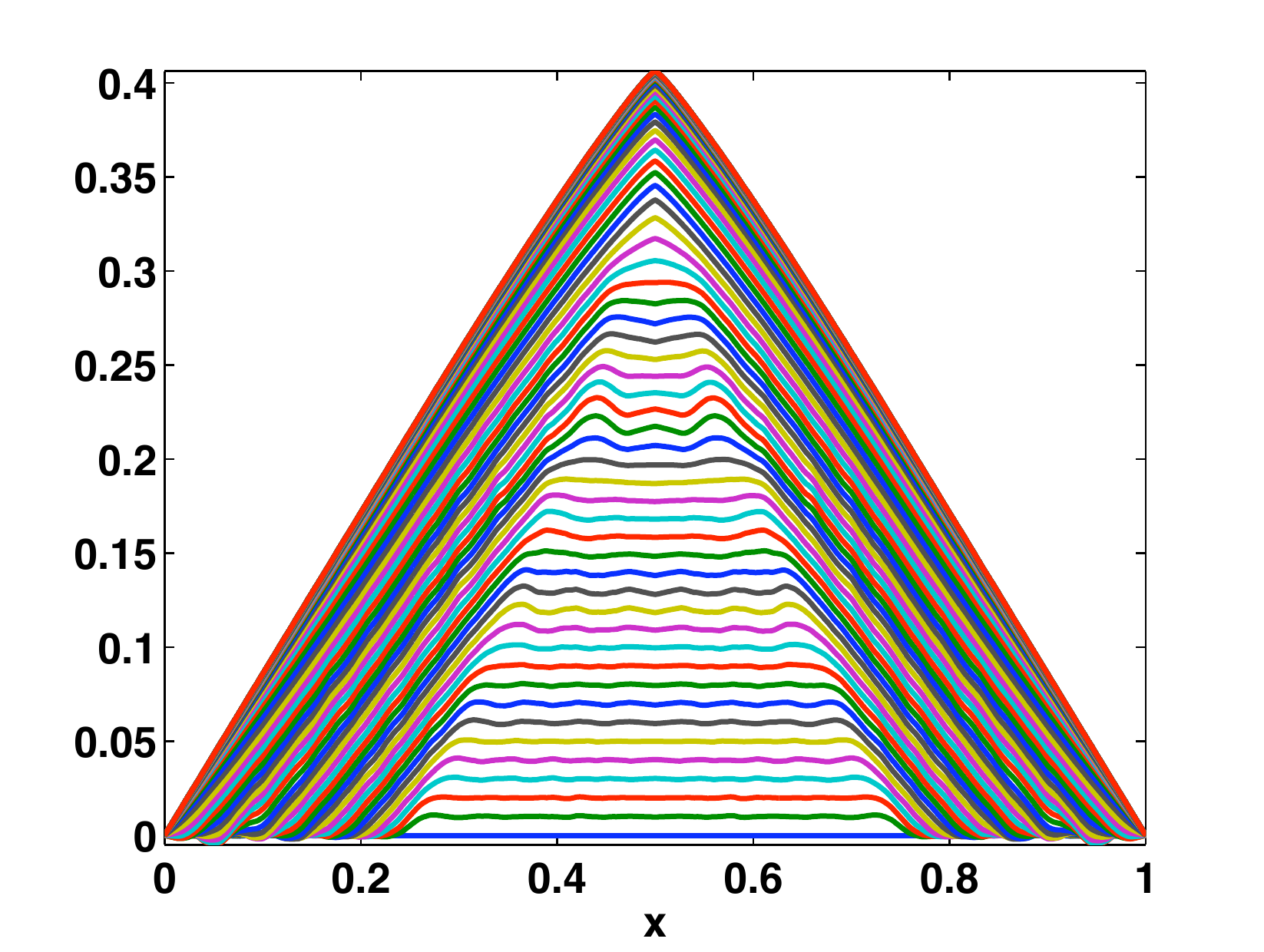}
  \includegraphics*[width=0.24\textwidth]{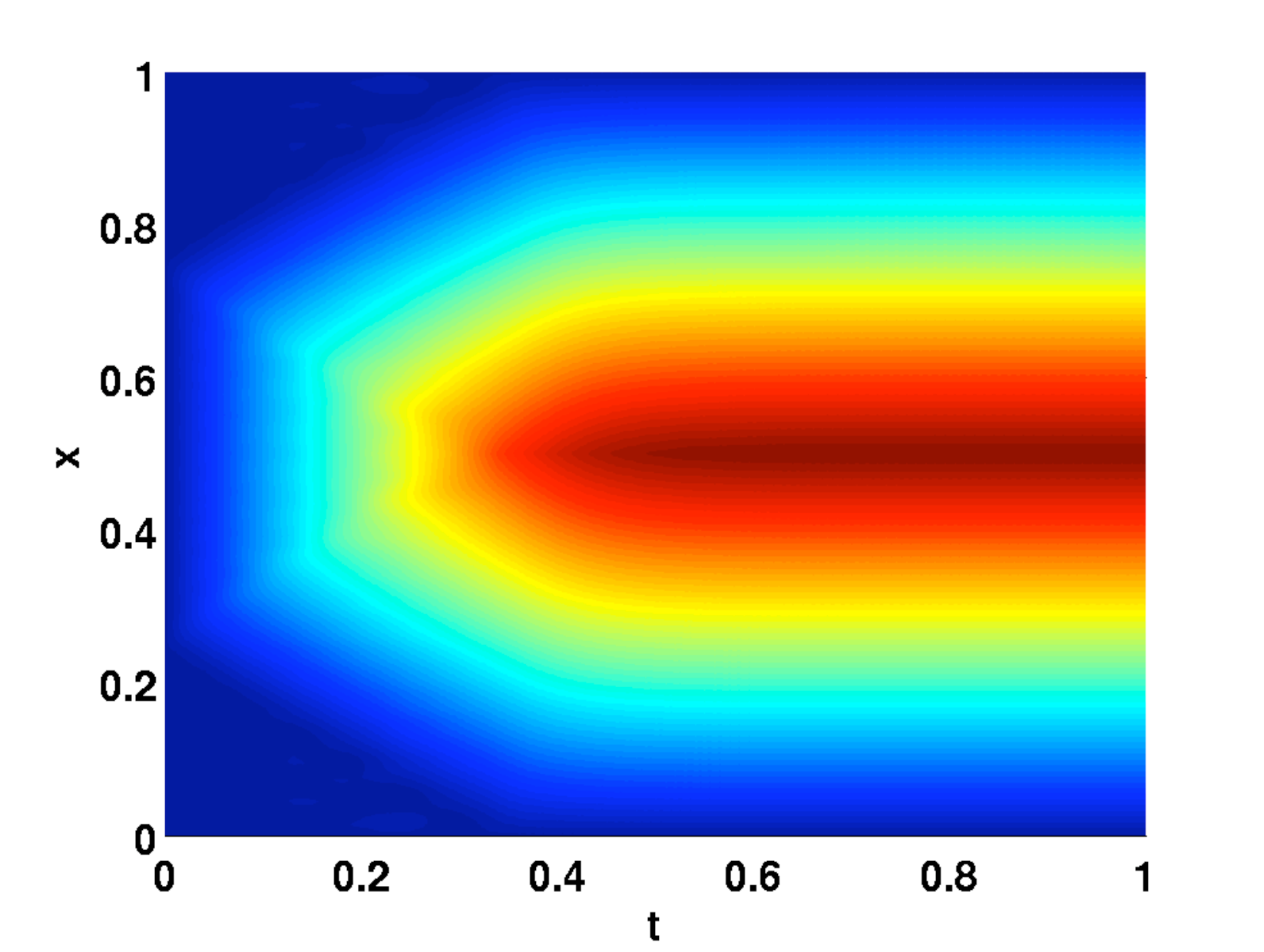}
  \caption{Solution of \eqref{evo_equ} with $g=g_{\mathrm{b}}$ for (a) $p=1.8$ 
    using $q=2$, (b) $p=3$ using $q=2.95$, (c)
    $p=5$ using $q=3.9$ and (d)  $p=10$ using $q=5.75$. As time
    evolves the solutions approach the steady state which, for 
    increasing $p$, turns out to be close to a hat function.}
  \label{fig:plaptime2}
\end{center}
\end{figure}

In \figref{fig:plaptime2} we consider $\nu=0$ with $g=g_{\mathrm{b}}$
and we plot the time evolution for (a) $p=1.8$ ($\qopt=2$),
(b) $p=3$ ($\qopt=2.95$), (c) $p=5$ ($\qopt=3.9$) and (d) $p=10$
($\qopt=5.75$). In each case the evolution found numerically
quickly converges towards the steady state. 

\begin{figure}[hth]
  \begin{center}
    (a) \hspace{0.4\textwidth} (b)  \\
    \includegraphics*[width=0.4\textwidth]{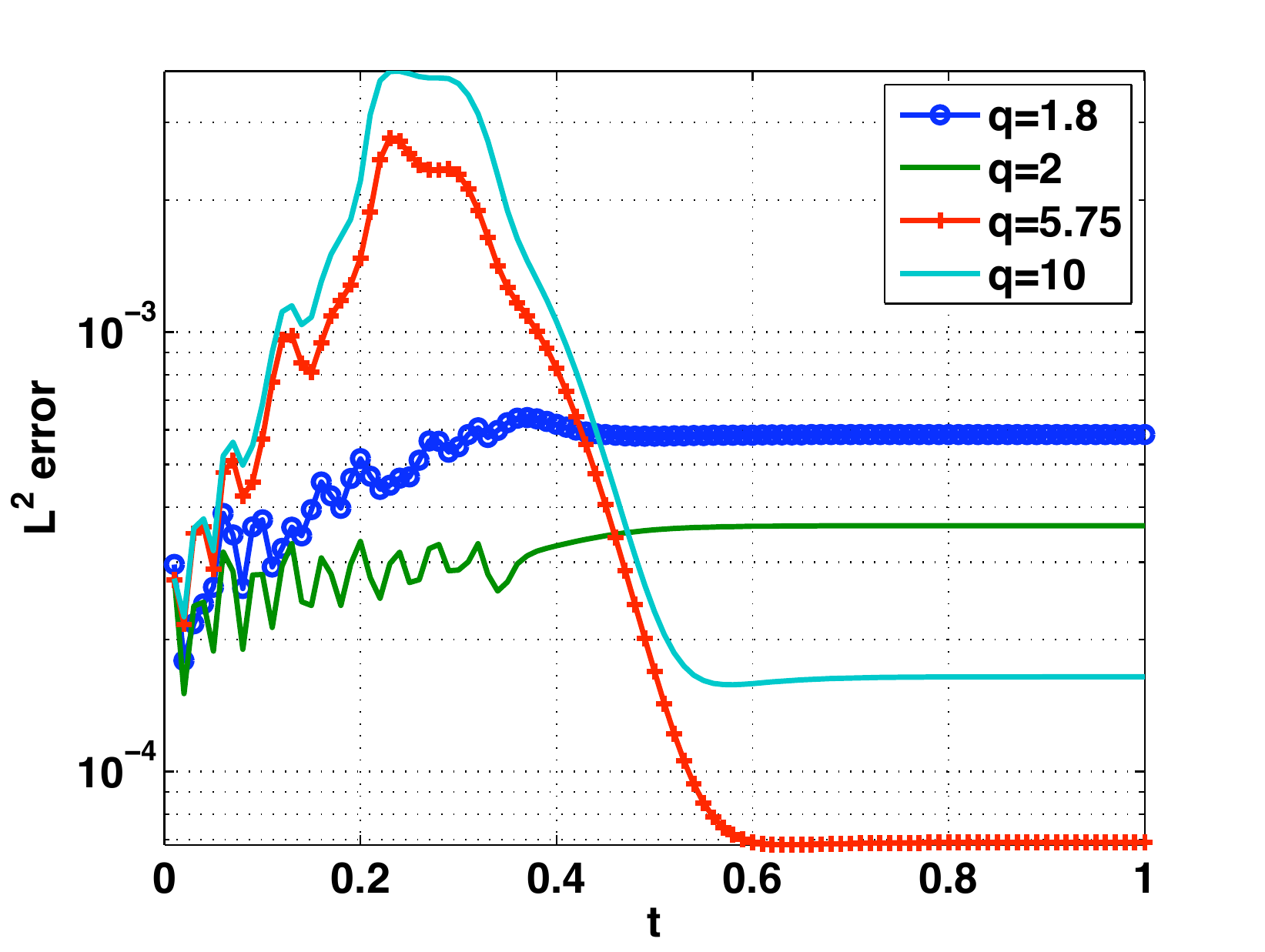}
    \includegraphics*[width=0.4\textwidth]{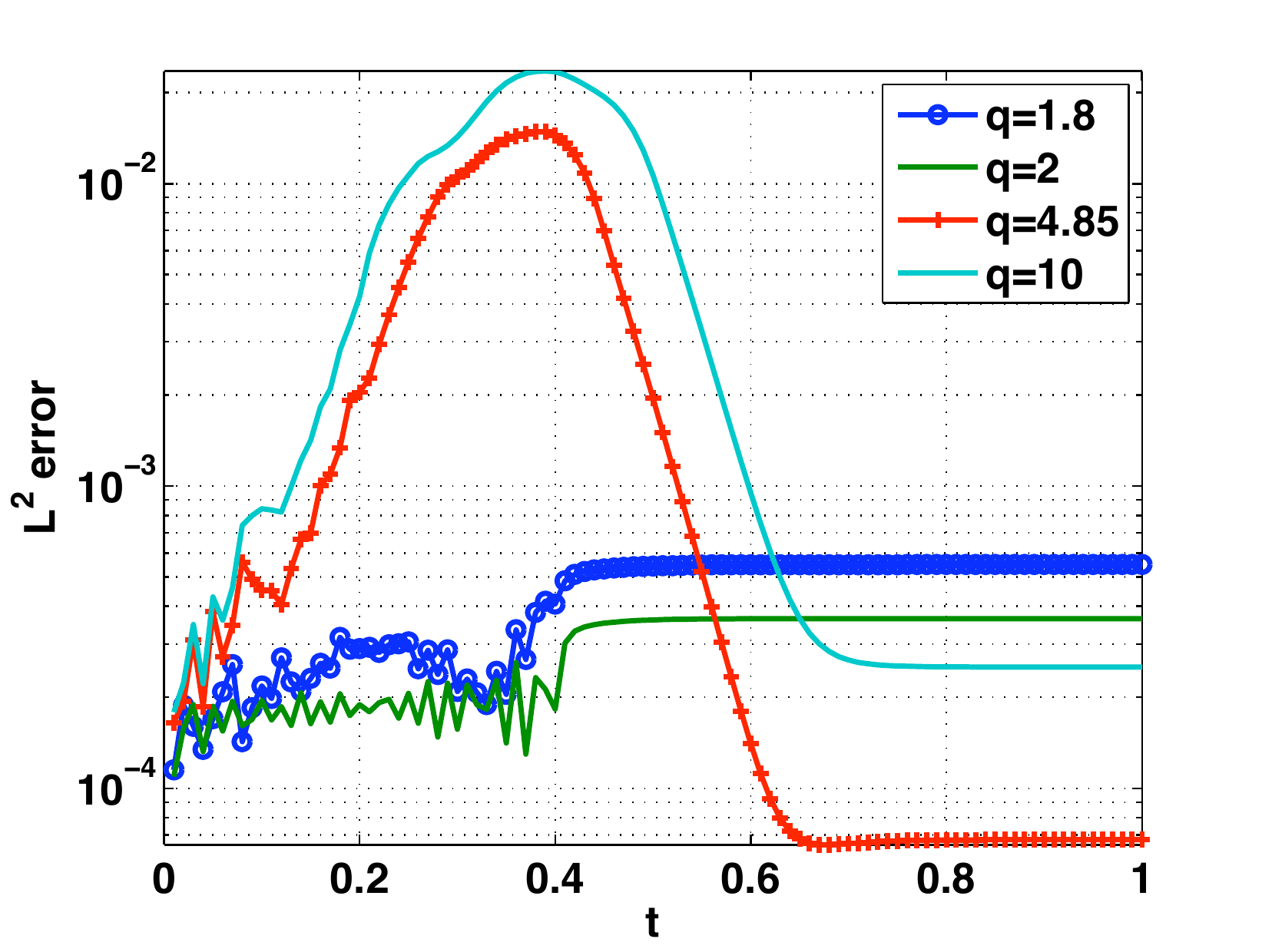}
    \caption{Residual in time for the numerical solution of
      \eqref{evo_equ}. We choose $\nu=0$, $N=40$ and take as a true
      solution one computed with $N=100$ and $q=2$. We fix $p=10$ and
      compare basis with 
      $q=1.8$, $q=2$, $q=\qopt$ and $q=10$. Here (a) corresponds to
      $g=g_\mathrm{b}$ ($\qopt=5.75$) and (b) to $g= 1$ 
      ($\qopt=4.85$).  We see that asymptotically in time $\qopt$ has
      the smallest error.  At a few
      points in time during the transient state, the $q=1.8$ basis is
      more accurate than the $q=2$.}
    \label{fig:plaptimeN}
  \end{center}
\end{figure}

\Figref{fig:plaptime2} was obtained by choosing in each case the
predicted $q=\qopt$ basis for the steady state. Intuitively this
choice of basis should be near optimal for large $t$. On the other
hand however, there is no reason to presume that it is so for small
values of $t$.  In \figref{fig:plaptimeN} we examine the spatial error
at each step for four different bases with $N=40$. For comparison we
have chosen $p=10$ and $q\in\{1.8, 2, \qopt,10\}$.  As the solution
approaches the steady state, in each case the basis $\qopt$ has the
smallest error as is expected (in fact a gain of almost an order of
magnitude in accuracy compared to $q=2$ is observed). 
It is remarkable however that, in the transient regime, these bases appear to be far
less accurate than other choices of $q$, such as $q=2$ and
$q=1.8$.  

\begin{figure}[hhh]
  \begin{center}
    (a) \hspace{0.32\textwidth} (b)  \hspace{0.32\textwidth} (c) \\
    \includegraphics*[width=0.32\textwidth]{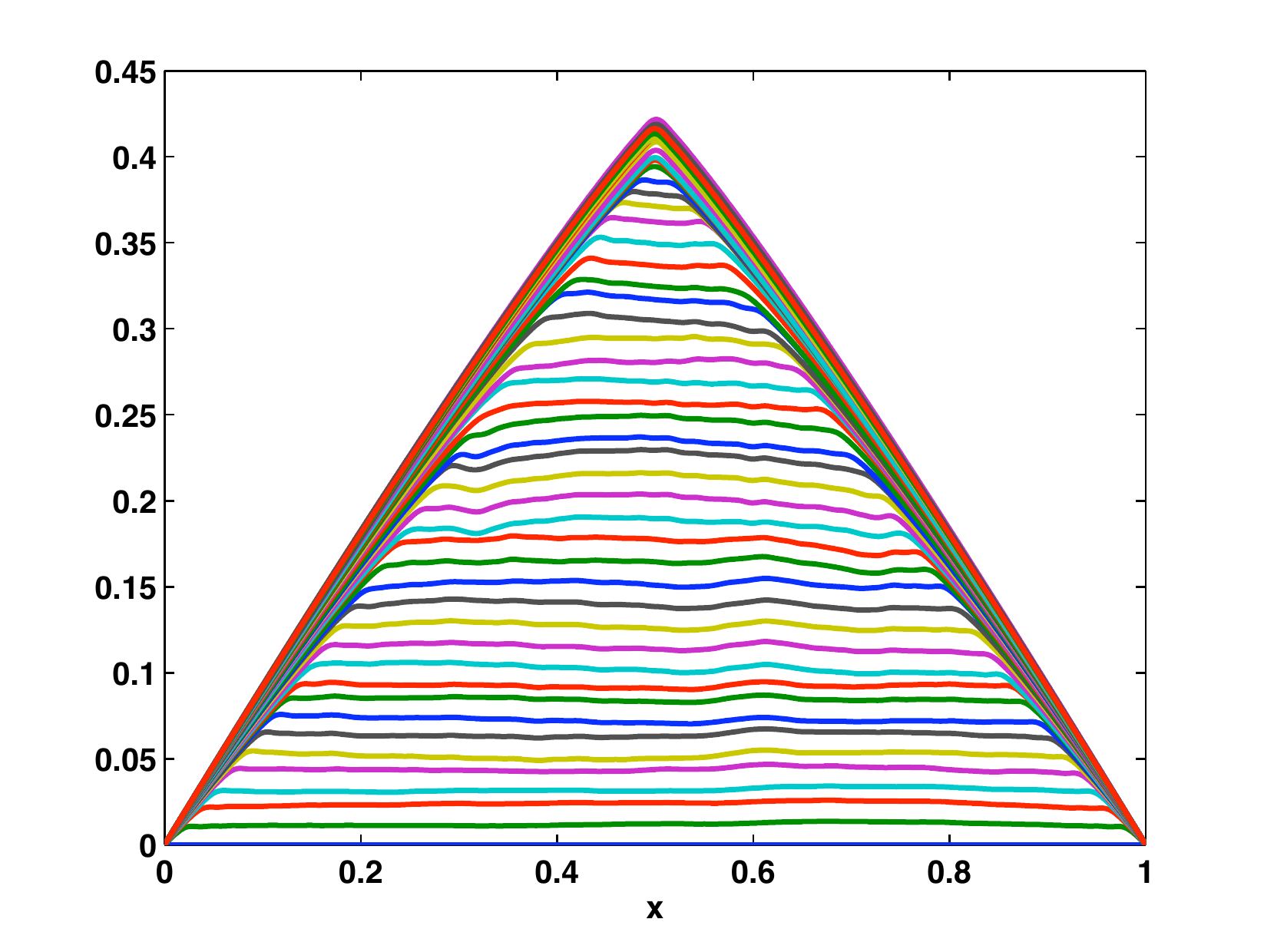}
    \includegraphics*[width=0.32\textwidth]{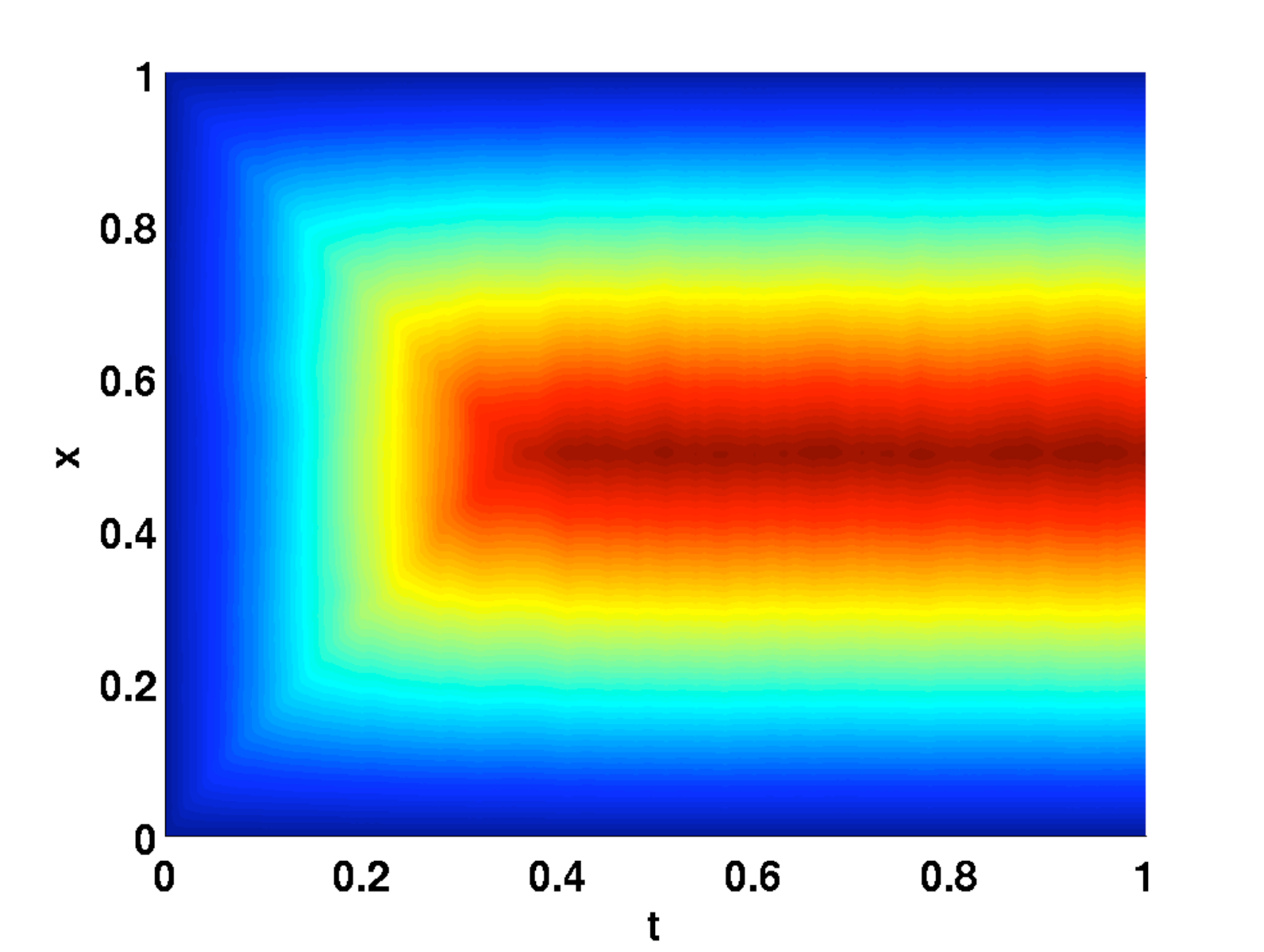}
    \includegraphics*[width=0.32\textwidth]{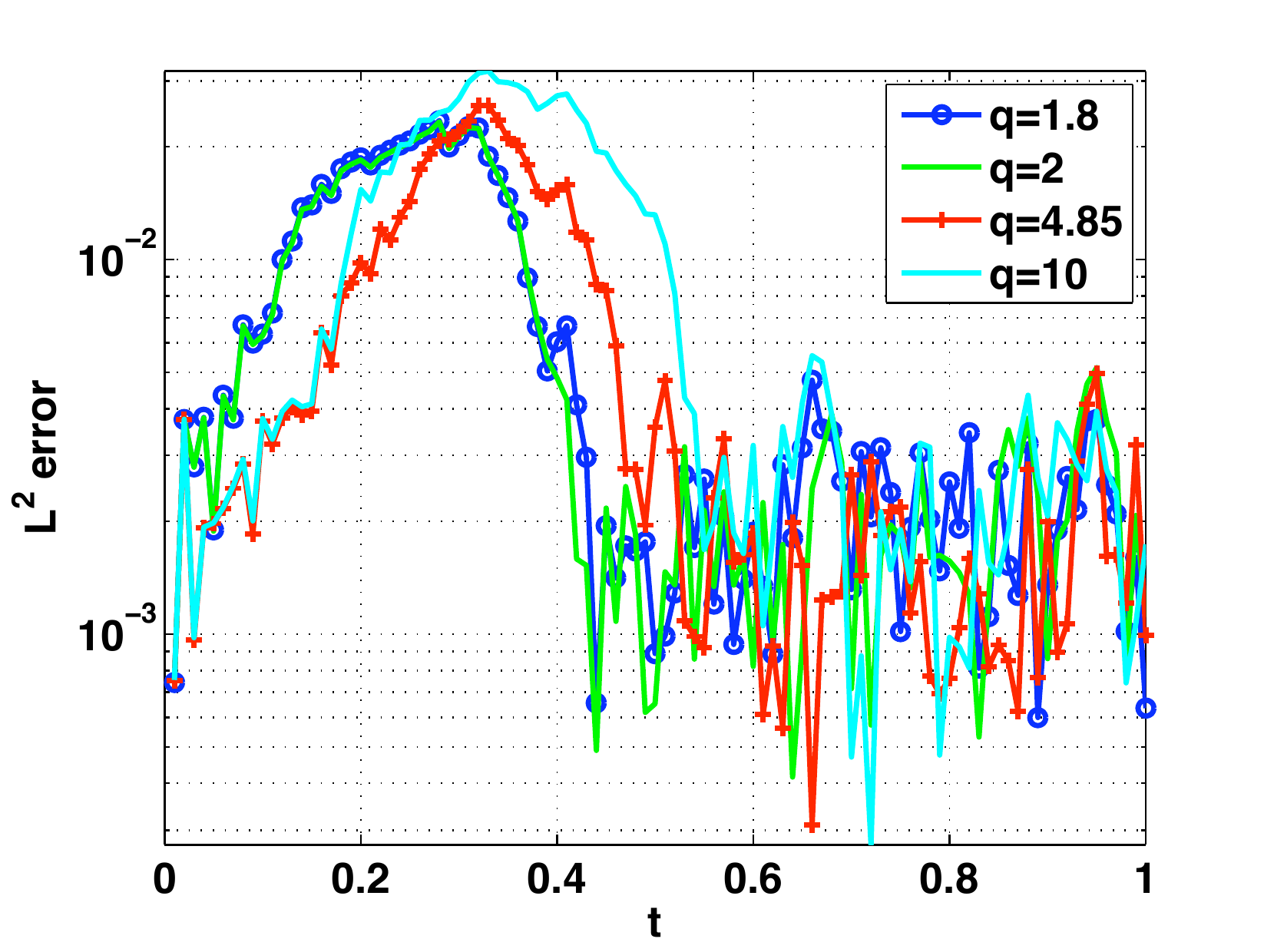}\\
    (d) \hspace{0.32\textwidth} (e)  \hspace{0.32\textwidth} (f) \\
    \includegraphics*[width=0.32\textwidth]{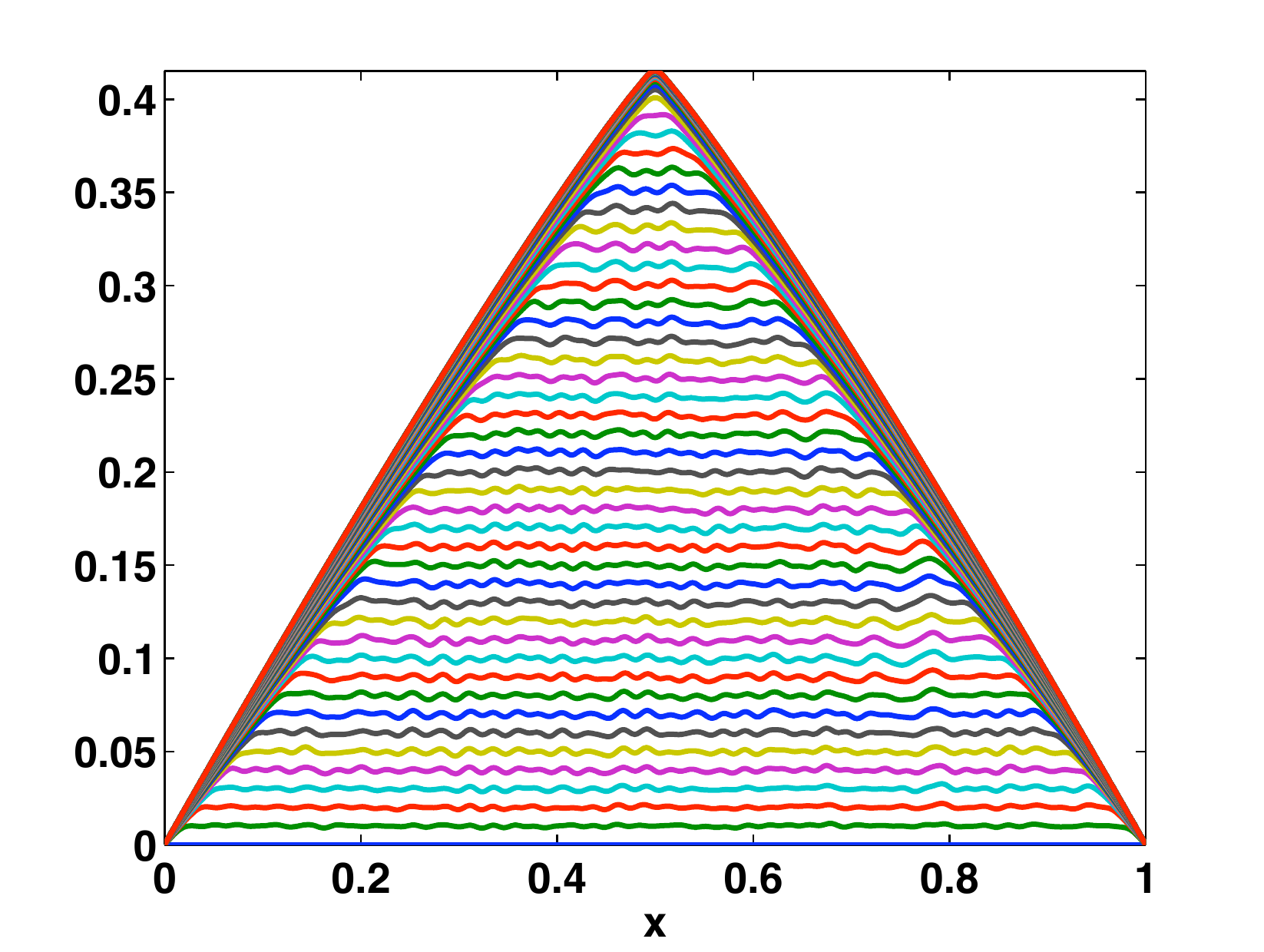}
    \includegraphics*[width=0.32\textwidth]{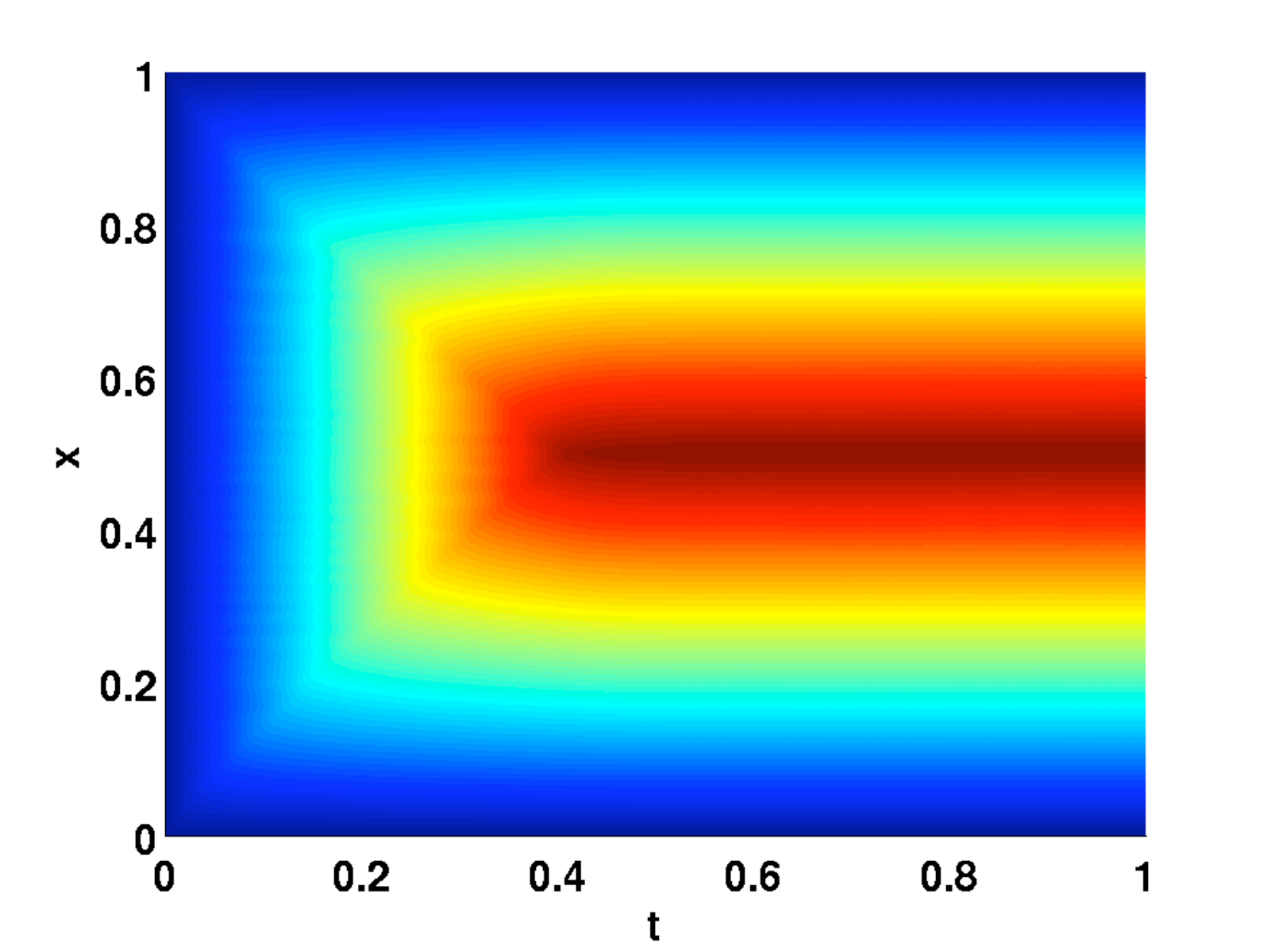}
    \includegraphics*[width=0.32\textwidth]{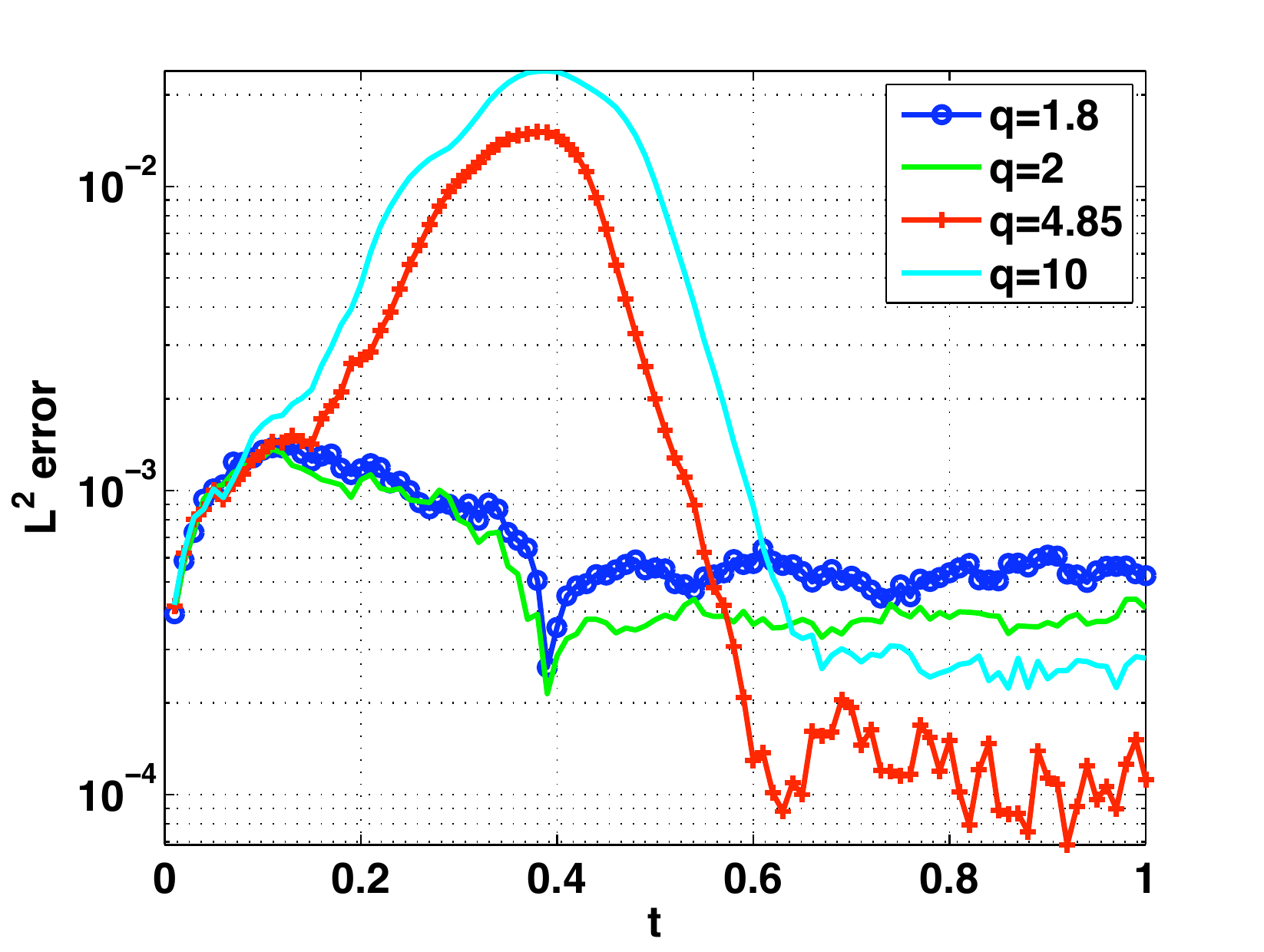}\\
    \caption{Sample realization with $p=10$ of the time evolution problem with
      a time-dependent stochastic forcing $\nu=0.2$. The time evolution is shown with
      equal steps of $\Dt=0.01$. In (a)-(c) we show the effect of an $H^1$
      noise in space, white in time.  In (d)-(f) we show the effect of a white noise in
      both space and time.}
    \label{fig:plaptimeNSE}
  \end{center}
\end{figure}

Let us now consider the stochastically forced case, $\nu>0$.
\Figref{fig:plaptimeNSE} shows plotted solution with time dependent
noises which are: $H^1$ in space and white in time for
(a)-(c), and  white both in space and time for (d)-(f). The forcing
corresponds to $\nu=0.2$. We expect that on average the noisy solution
is simply that of the deterministic system (which has a unique stable
solution). 
We see in (a)-(b) and (d)-(e) the time evolution
of the solution for one particular realization of the noise. This
should be compared with the deterministic case in
\figref{fig:plaptime2}-(d).

In \figref{fig:plaptimeNSE}-(c) and \figref{fig:plaptimeNSE}-(f) we plot the evolution of the
error for fixed time-step with $q\in\{1,2,\qopt,10\}$.  Similar to
the deterministic case, in the transient regime the error for $\qopt$
and $q=10$ is far higher than that for $q=2$ or $q=1.8$. 
Where the noise effects are large  (such as for the spatially
correlated noise in $H^1$ in \figref{fig:plaptimeNSE}-(c)), the optimal basis is no longer clear.
However where the effect is small (such as the white noise in \figref{fig:plaptimeNSE}-(f)), we
clearly see in the time dependent evolution about the steady state
that the $q=4.85$ outperforms the other bases (and in particular
$q=2$). 
For sandpile-type problems the introduction of a spatially
white noise is a natural choice. It is interesting to note the
small effect on the dynamics in this realization.

At present it is unclear whether the use of a $q$-sine basis 
with $q\not=2$ provides any real
computational advantage over the natural choice $q=2$ for the solution 
of \eqref{evo_equ}. There is clearly a computational overhead in obtaining 
the former for $q \neq 2$ that impacts significantly  on efficiency. 
However we have observed that the nonlinear problem \eqref{eq:discrete} 
is solved faster in the optimal basis.  This is certainly worth
further investigation. For example for $N=40$ we observe approximately 
a 20\% speed up in the nonlinear solve over the standard 2-sine basis. 
Thus, if solving many fixed $p$-problems, the corresponding optimal $q$-sine basis may 
not only be more accurate but also more efficient. 
In our current implementation the $q$-sine basis may be precomputed and stored.
Lemma~\ref{Sobolev_belongness}
give some prelimeinary indication on how to solve
the problem of apriori determining the optimal basis. Our numerical 
results suggest that for large $p$ we expect an optimal basis with $q>2$ 
for a right hand side with a discontinuous derivative.

\section*{Acknowledgements} We kindly thank Adrien Vignes and Bryan Rynne
for their thoughtful comments and involvement in discussions related to this
paper. The first author acknowledges support from the Universit{\'e} Paris Dauphine,
where part of this research was carried out.

\bibliographystyle{plainnat}
\bibliography{biblio_plap}

\begin{thebibliography}{23}
\providecommand{\natexlab}[1]{#1}
\providecommand{\url}[1]{\texttt{#1}}
\expandafter\ifx\csname urlstyle\endcsname\relax
  \providecommand{\doi}[1]{doi: #1}\else
  \providecommand{\doi}{doi: \begingroup \urlstyle{rm}\Url}\fi

\bibitem[Andreu et~al.(2009)Andreu, Maz{\'o}n, Rossi, and Toledo]{MR2481827}
F.~Andreu, J.~M. Maz{\'o}n, J.~D. Rossi, and J.~Toledo.
\newblock The limit as {$p\to\infty$} in a nonlocal {$p$}-{L}aplacian evolution
  equation: a nonlocal approximation of a model for sandpiles.
\newblock \emph{Calc. Var. Partial Differential Equations}, 35\penalty0
  (3):\penalty0 279--316, 2009.

\bibitem[Aronsson et~al.(1996)Aronsson, Evans, and Wu]{MR1419017}
G.~Aronsson, L.~C. Evans, and Y.~Wu.
\newblock Fast/slow diffusion and growing sandpiles.
\newblock \emph{J. Differential Equations}, 131\penalty0 (2):\penalty0
  304--335, 1996.

\bibitem[Barrett and Liu(1993{\natexlab{a}})]{MR1192966}
J.~W. Barrett and W.~B. Liu.
\newblock Finite element approximation of the {$p$}-{L}aplacian.
\newblock \emph{Math. Comp.}, 61\penalty0 (204):\penalty0 523--537,
  1993{\natexlab{a}}.

\bibitem[Barrett and Liu(1994)]{MR1276708}
J.~W. Barrett and W.~B. Liu.
\newblock Finite element approximation of the parabolic {$p$}-{L}aplacian.
\newblock \emph{SIAM J. Numer. Anal.}, 31\penalty0 (2):\penalty0 413--428,
  1994.

\bibitem[Barrett and Liu(1993{\natexlab{b}})]{ref_3}
J.~W. Barrett and W.~B. Liu.
\newblock Higher order regularity for the solution of some nonlinear degenerate
  elliptic equations.
\newblock \emph{SIAM J. Math. Anal.}, 24:\penalty0 1522 -- 1536,
  1993{\natexlab{b}}.

\bibitem[Barrett and Prigozhin(2000)]{Barrett2000977}
J.~W. Barrett and L.~Prigozhin.
\newblock Bean's critical-state model as the {$p\to\infty$} limit of an
  evolutionary {$p$}-{L}aplacian equation.
\newblock \emph{Nonlinear Analysis}, 42\penalty0 (6):\penalty0 977--993, 2000.

\bibitem[Bennewitz and Sait\={o}(2004)]{bensai}
C.~Bennewitz and Y.~Sait\={o}.
\newblock Approximation numbers of {S}obolev embedding operators on an
  interval.
\newblock \emph{J. London Math. Soc.}, 70:\penalty0 244--260, 2004.

\bibitem[Binding et~al.(2006)Binding, Boulton, {\v{C}}epi{\v{c}}ka, Dr{\'a}bek,
  and Girg]{MR2240660}
P.~Binding, L.~Boulton, J.~{\v{C}}epi{\v{c}}ka, P.~Dr{\'a}bek, and P.~Girg.
\newblock Basis properties of eigenfunctions of the {$p$}-{L}aplacian.
\newblock \emph{Proc. Amer. Math. Soc.}, 134\penalty0 (12):\penalty0 3487--3494
  (electronic), 2006.

\bibitem[Caboussat and Glowinski(2009)]{MR2449104}
A.~Caboussat and R.~Glowinski.
\newblock A numerical method for a non-smooth advection-diffusion problem
  arising in sand mechanics.
\newblock \emph{Commun. Pure Appl. Anal.}, 8\penalty0 (1):\penalty0 161--178,
  2009.

\bibitem[Da~Prato and Zabczyk(1992)]{DaPZ}
G.~Da~Prato and J.~Zabczyk.
\newblock \emph{Stochastic Equations in Infinite Dimensions}, volume~44 of
  \emph{Encyclopedia of Mathematics and its Applications}.
\newblock Cambridge University Press, Cambridge, 1992.

\bibitem[Ebmeyer and Liu(2005)]{ref1_2}
C.~Ebmeyer and W.~B. Liu.
\newblock Quasi-norm interpolation error estimates for the piecewise linear
  finite element approximation of p-{L}aplace equations.
\newblock \emph{Numer. Math.}, 100:\penalty0 233--258, 2005.

\bibitem[Ebmeyer et~al.(2005)Ebmeyer, Liu, and Steinhauer]{ref1_1}
C.~Ebmeyer, W.~B. Liu, and M.~Steinhauer.
\newblock Global regularity in fractional order {S}obolev spaces for the
  p-{L}aplace equation on polyhedral domains.
\newblock \emph{J. Anal. Appl.}, 24:\penalty0 353--237, 2005.

\bibitem[Elbert(1981)]{MR680591}
{\'A}.~Elbert.
\newblock A half-linear second order differential equation.
\newblock In \emph{Qualitative theory of differential equations, {V}ol. {I},
  {II} ({S}zeged, 1979)}, volume~30 of \emph{Colloq. Math. Soc. J\'anos
  Bolyai}, pages 153--180. North-Holland, Amsterdam, 1981.

\bibitem[Evans et~al.(1997)Evans, Feldman, and Gariepy]{MR1451539}
L.~C. Evans, M.~Feldman, and R.~F. Gariepy.
\newblock Fast/slow diffusion and collapsing sandpiles.
\newblock \emph{J. Differential Equations}, 137\penalty0 (1):\penalty0
  166--209, 1997.

\bibitem[Falcone and Finzi~Vita(2006)]{MR2240806}
M.~Falcone and S.~Finzi~Vita.
\newblock A finite-difference approximation of a two-layer system for growing
  sandpiles.
\newblock \emph{SIAM J. Sci. Comput.}, 28\penalty0 (3):\penalty0 1120--1132
  (electronic), 2006.

\bibitem[Igbida(2010)]{Igbida}
N.~Igbida.
\newblock A generalized collapsing sandpile model.
\newblock \emph{Archiv der Mathematik}, 94\penalty0 (2):\penalty0 193--200,
  2010.

\bibitem[Kuijper(2007)]{Kuijper1}
A.~Kuijper.
\newblock Image analysis using $p$-{L}aplacian and geometrical {PDE}s.
\newblock \emph{PAMM}, 7\penalty0 (1):\penalty0 1011201--1011202, 2007.

\bibitem[Lindqvist(1995)]{MR1469702}
P.~Lindqvist.
\newblock Some remarkable sine and cosine functions.
\newblock \emph{Ricerche Mat.}, 44\penalty0 (2):\penalty0 269--290, 1995.

\bibitem[Liu(2009)]{Liu}
W.~Liu.
\newblock On the stochastic $p$-{L}aplace equation.
\newblock \emph{J. of Mathematical Analysis and Applications}, 360:\penalty0
  737--751, 2009.

\bibitem[{\^O}tani(1984)]{MR753635}
M.~{\^O}tani.
\newblock A remark on certain nonlinear elliptic equations.
\newblock \emph{Proc. Fac. Sci. Tokai Univ.}, 19:\penalty0 23--28, 1984.

\bibitem[Pr\'ev\^ot and R\"ockner(2007)]{PrvtRcknr}
C.~Pr\'ev\^ot and M.~R\"ockner.
\newblock \emph{A Concise Course on Stochastic Partial Differential Equations}.
\newblock Springer, 2007.

\bibitem[Rossi(2010)]{Rossi_TOW}
J.~Rossi.
\newblock Tug-of-war games. {G}ames that {PDE} people like to play.
\newblock \emph{Preprint}, 2010.

\bibitem[Zhang et~al.(2007)Zhang, Peng, and Wu]{ZHANG2007546}
H-Y. Zhang, Q-C. Peng, and Y-D. Wu.
\newblock Wavelet inpainting based on {$p$}-{L}aplace operator.
\newblock \emph{Acta Automatica Sinica}, 33\penalty0 (5):\penalty0 546--549,
  2007.

\end{thebibliography}

\end{document}